\documentclass[a4paper,10pt,reqno]{amsart}
\usepackage{amsmath}
\usepackage{cases}
\usepackage{amsfonts}
\usepackage{latexsym, amssymb, amsmath, amsthm, bbm}
\usepackage[all]{xy}
\usepackage{pgfplots}
\usepackage{enumerate}
\usepackage{mathrsfs}
\usepackage{array}
\usepackage{longtable}
\usepackage[colorlinks,linkcolor=blue,citecolor=blue]{hyperref}

\DeclareSymbolFont{EulerExtension}{U}{euex}{m}{n}
\DeclareMathSymbol{\euintop}{\mathop} {EulerExtension}{"52}
\DeclareMathSymbol{\euointop}{\mathop} {EulerExtension}{"48}

\allowdisplaybreaks[4]

\def \soc{\operatorname{soc}}
\def \dim{\operatorname{dim}}

\def \Z{\mathbb{Z}}

\def \k{\Bbbk}
\def \A{\mathcal{A}}
\def \B{\mathcal{B}}
\def \a{\alpha}
\def \b{\beta}
\newcommand{\YD}[1]{{}^{#1}_{#1}\mathcal{YD}}
\newcommand{\biproduct}{\#} % Bosonization 符号

\usepackage{mathtools}

\numberwithin{equation}{section}

\newtheorem{theorem}{Theorem}[section]
\newtheorem{lemma}[theorem]{Lemma}
\newtheorem{proposition}[theorem]{Proposition}
\newtheorem{corollary}[theorem]{Corollary}
\newtheorem{definition}[theorem]{Definition}

\newtheorem{remark}[theorem]{Remark}

\begin{document}
\title[Simple Yetter-Drinfeld modules over Generalized Liu algebras]{Simple Yetter-Drinfeld modules over Generalized Liu algebras}
\author[X. Zhen]{Xiangjun Zhen$^\dag$}
\author[G. Liu]{Gongxiang Liu}
\author[J. Yu]{Jing Yu}
\address{Department of Mathematics, Nanjing University, Nanjing 210093, China}
\email{xjzhen@smail.nju.edu.cn}
\address{Department of Mathematics, Nanjing University, Nanjing 210093, China}
\email{gxliu@nju.edu.cn}
\address{School of Mathematical Sciences, University of Science and Technology of China, Hefei 230026, China}
\email{yujing46@ustc.edu.cn}

\thanks{2020 \textit{Mathematics Subject Classification}. 16T05.}
\thanks{$^\dag$ Corresponding author}
\keywords{Hopf algebras, Generalized Liu algebras, Simple Yetter-Drinfeld modules, Nichols algebras}
\maketitle

\date{}

\begin{abstract}
Let $H$ be a generalized Liu algebra over an algebraically closed field $\k$ of characteristic zero. We prove that all simple Yetter-Drinfeld modules over $H$ are finite-dimensional and present an explicit classification of these modules. Moreover, we completely determine which of them admit a finite-dimensional Nichols algebra.
\end{abstract}

\maketitle
\section{Introduction}

Yetter-Drinfeld modules over a bialgebra were first introduced by Yetter \cite{Y90} in 1990, where they were originally called crossed bimodules. They received their current name in \cite{RT}. For any finite-dimensional Hopf algebra $H$ over a field $\k$, Majid \cite{Mj91} identified these modules with the modules over the Drinfeld double $D(H^{cop})$ via the category equivalences ${}^H_H \mathcal{YD} \approx {}_{H^{cop}} \mathcal{YD}^{H^{cop}} \approx {}_{D(H^{cop})}\mathcal{M}$.

Now suppose that $H$ is a Hopf algebra with a bijective antipode. Then the category ${}^H_H \mathcal{YD}$ is a braided monoidal category. A natural and important problem is to completely classify the Yetter-Drinfeld modules in ${}^H_H \mathcal{YD}$. Such a classification serves as a necessary step for further studies, most notably the classification of pointed Hopf algebras using the lifting method of Andruskiewitsch and Schneider \cite{AS98, AS10}.

As a crucial step towards understanding the entire category of Yetter-Drinfeld modules, it is natural to first investigate its simple objects. Consequently, many authors have focused on the classification of simple Yetter-Drinfeld modules. As mentioned above, for finite-dimensional Hopf algebras, this problem is usually solved by studying the representations of the Drinfeld double. For example, if $H$ is a factorizable Hopf algebra, Reshetikhin and Semenov-Tian-Shansky \cite{RS88} proved that the Drinfeld double $D(H)$ is isomorphic to a twist of $H \otimes H$. This result makes it much easier to construct Yetter-Drinfeld modules. 
Later, Schneider \cite{S01} showed that $H$ is factorizable if and only if $D(H)$ is isomorphic to such a twist. Furthermore, for a semisimple Hopf algebra $H$, it was established in \cite{B17} that any irreducible representation of its Drinfeld double $D(H)$ can be obtained as an induced representation from a certain Hopf subalgebra. More recently, Liu and Zhu \cite{LZ19} described the simple Yetter-Drinfeld modules over semisimple and cosemisimple quasi-triangular Hopf algebras. Cohen and Westreich \cite{CW24} also gave another description when the base field $\k$ is algebraically closed and of characteristic zero.

Although the general theory is well understood, finding the exact classification of simple Yetter-Drinfeld modules depends heavily on the specific Hopf algebra. Therefore, researchers often have to solve this problem case by case.

The easiest case is the group algebra $H = \k G$ of a finite group $G$ over an algebraically closed field $\k$ of characteristic zero. In this situation, the simple modules over $D(H)$ were completely classified by Dijkgraaf, Pasquier, and Roche \cite{DPR90}, and independently by Gould \cite{G93}. In addition to group algebras, many researchers have studied the Drinfeld double of bosonizations over group algebras. Specifically, let $G$ be an abelian group and $W$ be a Yetter-Drinfeld module over $\k G$. There are many results on the representations of the Drinfeld double $D(\B(W) \# \k G)$ and its slight variations. For example, specific algebras were calculated in \cite{ADP22, AAMR18, AP21, ABDF25, AJS94, ARS10}, while more general results were established in \cite{H10, HY10, L91, PV16, RS08}.

Beyond group algebras, simple Yetter-Drinfeld modules have also been classified for many other semisimple Hopf algebras. Examples include low-dimensional cases like the Kac-Paljutkin algebra \cite{HZ07, Shi_2019} and Kashina's sixteen-dimensional Hopf algebras \cite{LLK22, ZGH21, ZGH2021, ZY25}, as well as several other special cases \cite{S22, S24}.

For non-semisimple Hopf algebras, there are also many important results. In the pointed case, the Yetter-Drinfeld modules over Taft algebras \cite{C00} and generalized Taft algebras \cite{MBG21} have been completely determined. In the non-pointed case, researchers have studied several specific examples \cite{GA19, HX2018, X19, X23, X24, XH21, Zheng2024}, but a general classification is still an open problem.

Existing work on the classification of Yetter-Drinfeld modules primarily focuses on finite-dimensional Hopf algebras, where results are typically achieved by computing the representations of the Drinfeld double. For infinite-dimensional Hopf algebras, this standard method fails, and consequently, far fewer classification results are available. A notable exception is the classification of simple Yetter-Drinfeld modules over the infinite dihedral group $\mathbb{D}_{\infty}$, established by Zhang \cite{Zhang23}.

The group algebra $\k \mathbb{D}_{\infty}$ constitutes one of the five classes of affine prime regular Hopf algebras of GK-dimension one, as classified by Ding, Liu, and Wu \cite{DLW16}. Building on this framework, we recently investigated another algebra within this family—the infinite-dimensional Taft algebra—and completely classified its simple Yetter-Drinfeld modules in \cite{ZLY25}. In the present article, we continue this line of inquiry by studying yet another crucial member of this family: the generalized Liu algebras $ B(n,w,\gamma) $. Unlike their finite-dimensional counterparts, these infinite-dimensional algebras exhibit considerably richer representation-theoretic properties and present distinct structural challenges due to their non-semisimplicity. Determining the Yetter-Drinfeld modules over generalized Liu algebras represents a significant step forward in the broader program of classifying Hopf algebras of GK-dimension one (see \cite[Question 1.15]{BZ21}).

Our first main result is Theorem \ref{thm:YDmodule}:
\begin{theorem}
Let $H = B(n,w,\gamma)$.
\begin{itemize}
	\item[(1)] Every simple Yetter-Drinfeld module over $H$ is isomorphic to $V(\a,\b,x^rg^i)$ for some scalars $\a, \b \in \k^*$ and integers $r, i \in \Z$ satisfying $\a^w = \b^n$.
	\item[(2)] Two such modules $V(\a,\b,x^rg^i)$ and $V(\a',\b',x^{r'}g^{i'})$ are isomorphic if and only if
	\[
	\a = \a', \quad \b = \b', \quad \text{and} \quad x^r g^i = x^{r'} g^{i'}.
	\]
\end{itemize}
\end{theorem}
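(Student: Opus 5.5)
The plan is to exploit the comodule structure first, since $H$ is pointed with abelian group of grouplikes generated by $x^{\pm1}$ and $g^{\pm1}$ (subject to the relation tying $g^n$ to $x^w$). I will work with the presentation of $B(n,w,\gamma)$ as I recall it; the paper's precise relations should be checked against this. There $x$ is central and grouplike, $g$ is grouplike, $yg=\gamma gy$, $y^n$ is a polynomial in $x^{\pm 1}$ and $g$ (I will use $y^n = 1-x^w$), $g^n=x^w$, and $\Delta(y)=y\otimes g+1\otimes y$.

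\textbf{Reduction to a cyclic generator.} Let $V$ be a simple Yetter-Drinfeld module. Every nonzero comodule contains a simple subcomodule, and since $H$ is pointed these are one-dimensional. Hence
\[
V_h=\{v\in V:\delta(v)=h\otimes v\}
\]
is nonzero for some grouplike $h=x^rg^i$. By the Yetter-Drinfeld compatibility, a grouplike $k$ maps $V_h$ into $V_{khk^{-1}}=V_h$, because the grouplikes commute. So $x$ and $g$ act on $V_h$ as commuting operators.

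To obtain a common eigenvector I first need finiteness. Take any $0\neq v\in V_h$. By simplicity $V=H\cdot v$. Using the PBW-type basis $\{y^jx^ag^b\}$ and the fact that $y^n$ lies in $\k[x^{\pm1},g^{\pm1}]$, we get that $V$ is spanned by the vectors $y^j\cdot(\k[x^{\pm1},g^{\pm1}]v)$ with $0\le j<n$. The piece $\k[x^{\pm 1},g^{\pm1}]v\subseteq V_h$ may still be infinite-dimensional. To handle this I would argue instead that $V_h$ is a module over the commutative algebra $\k[x^{\pm1},g^{\pm1}]$, and that a maximal-ideal argument combined with simplicity of $V$ forces the action to be scalar. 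Concretely: if $f\in\k[x^{\pm1},g^{\pm1}]$ is central enough in $H$ (for instance $f=x-\a$, or $f=g^n-\b^n$), then $fV$ is a Yetter-Drinfeld submodule, so it is $0$ or $V$. For an algebraically closed field this yields eigenvalues, in the spirit of Schur's lemma or Quillen's lemma for countable-dimensional algebras.

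\textbf{The model module.} Once a common eigenvector $v_0\in V_h$ with $x v_0=\a v_0$ and $g v_0=\b v_0$ is found, the relation $g^n=x^w$ gives $\a^w=\b^n$, and $V=\operatorname{span}\{y^jv_0:0\le j<n\}$. Next I compute $\delta(y^jv_0)$ from the compatibility formula
\[
\delta(y\cdot v)=y_{(1)}v_{(-1)}S(y_{(3)})\otimes y_{(2)}v_{(0)}.
\]
Inductively, $\delta(y^jv_0)=h\otimes y^jv_0+\sum_{k<j}c_{jk}\,(\text{element of }H)\otimes y^kv_0$, where the term with $k=0$ carries a nonzero multiple of $y^j$-type elements, e.g.\ $(1-\gamma^{\ast})^{\ast}\, y^jhg^{-j}$ with $q$-binomial coefficients. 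These formulas define the candidate $V(\a,\b,x^rg^i)$ abstractly: the basis is $\{v_j\}_{0\le j<n}$, with $x v_j=\a v_j$, $g v_j=\b\gamma^{-j}v_j$, $y v_j=v_{j+1}$, and $y v_{n-1}=(1-\a^w)v_0$. The coaction is given by the formulas above. I will then verify directly that these data satisfy the Yetter-Drinfeld axioms, and observe that $v_0\mapsto v_0$ extends to an isomorphism $V(\a,\b,h)\to V$.

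\textbf{Simplicity and the main obstacle.} The main obstacle is showing that the $y^jv_0$ are linearly independent, so that the dimension is exactly $n$, and that $V(\a,\b,h)$ is simple. I would handle both via the comodule socle. The leading component of $\delta(y^jv_0)$ in $H\otimes v_0$ involves $y^j$ with a nonzero coefficient, because $\gamma$ is a primitive $n$-th root of unity and so the relevant $\gamma$-factorials are nonzero for $j<n$. Hence any nontrivial combination of the $y^jv_0$ has nonzero coaction outside $\k h\otimes V$. This shows both that the combination is nonzero, giving independence, and that the comodule socle of $V(\a,\b,h)$ is exactly $\k v_0$. Consequently any nonzero Yetter-Drinfeld submodule contains $v_0$, and therefore equals everything.

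\textbf{Isomorphism classification.} For part (2), each of the three parameters is an invariant of the module: $\a$ is the scalar by which the central element $x$ acts, $h=x^rg^i$ is the grouplike type of the one-dimensional socle, and $\b$ is the $g$-eigenvalue on that socle. Different parameters therefore give non-isomorphic modules, and the converse direction is immediate.
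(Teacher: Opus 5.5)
\textbf{The gap is in the step you flagged as the main obstacle.} It is not true that $v_0, yv_0,\dots,y^{n-1}v_0$ are always linearly independent, and simple Yetter--Drinfeld modules over $H$ need not have dimension $n$. The $v_0$-component of $\delta(y^kv_0)$ is not controlled by $\gamma$-factorials. Carrying out your induction with $v_0$ of type $(\alpha,\beta,x^rg^i)$ gives
\[
\delta(y^kv_0)=\sum_{l=0}^{k}\binom{k}{l}_{\gamma}\gamma^{-(k-l)l}\Bigl(\prod_{s=l+1}^{k}\bigl(\beta-\gamma^{s-1-i}\bigr)\Bigr)\,y^{k-l}x^rg^{i-k}\otimes y^lv_0 .
\]
Already for $k=1$ the $v_0$-coefficient is $(\beta-\gamma^{-i})\,yx^rg^{i-1}$, which vanishes when $\beta=\gamma^{-i}$. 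The $\gamma$-binomials, which are nonzero for $k<n$, only ever appear multiplied by factors $\beta-\gamma^{s-1-i}$ that depend on $\beta$ and $i$.

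If $\beta^n\neq 1$, none of these factors vanish and your conclusion ($\dim V=n$, $yv_{n-1}=(1-\beta^n)v_0$) is correct. If $\beta=\gamma^j$, let $m=n-\phi(-i-j)$, the least $m\ge 0$ with $\beta=\gamma^{m-i}$. Then every component of $\delta(y^{m+1}v_0)$ along $y^lv_0$ with $l\le m$ contains the factor $\beta-\gamma^{m-i}=0$, so
\[
\delta(y^{m+1}v_0)=x^rg^{i-m-1}\otimes y^{m+1}v_0 .
\]
Hence $y^{m+1}v_0$ lies in the comodule socle, and $H\cdot y^{m+1}v_0$ is a Yetter--Drinfeld submodule. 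Because $y^n$ acts by $1-\beta^n=0$, this submodule is spanned by the $y^kv_0$ with $m+1\le k\le n-1$. It does not contain $v_0$, since their $g$-eigenvalues $\beta\gamma^{-k}$ differ from $\beta$. Simplicity therefore forces $y^{m+1}v_0=0$. Your own socle argument, which is valid for $k\le m$, then gives $\dim V=m+1$, and this can be anything from $1$ to $n$. For instance, $\beta=\gamma^{-i}$ gives a one-dimensional simple module with $yv_0=0$.

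\textbf{Consequences for your construction.} When $\beta^n=1$ and $m<n-1$, your $n$-dimensional model is not simple. The span of $v_{m+1},\dots,v_{n-1}$ is a proper Yetter--Drinfeld submodule, and the comodule socle contains $v_{m+1}$ as well as $v_0$. So your claim that the socle is $\Bbbk v_0$ fails there, and the map $V(\alpha,\beta,h)\to V$ you propose is only a surjection. Part (1) is therefore not proved as written. The missing ingredient is exactly this explicit coefficient computation. It is why the paper's $V(\alpha,\beta,x^rg^i)$ has basis $v_0,\dots,v_m$ with $m$ depending on $\beta$ and $i$, and $yv_m=(1-\beta^n)v_0$. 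Once the dimension is corrected, your simplicity argument (the $v_0$-coefficients are nonzero for $k\le m$) and your invariants for part (2) go through, essentially as in the paper.

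\textbf{Finiteness step.} This is only gestured at. A countable-dimension Schur lemma would need $\Bbbk$ uncountable, and none is needed. Since $g^n=x^w$, the module $V=Hv$ is finitely generated over the central subalgebra $\Bbbk[x^{\pm1}]$. The kernel and image of $x-\lambda$ are Yetter--Drinfeld submodules. If $x-\lambda$ were bijective for every $\lambda$, $V$ would be a nonzero $\Bbbk(x)$-vector space finitely generated over $\Bbbk[x^{\pm1}]$, which is impossible.
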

Moreover, we completely determine which of these modules admit a finite-dimensional Nichols algebra. This is the content of Theorem \ref{thm:nicholsalgebra}, which reads as follows:
\begin{theorem}
Let $H = B(n, w, \gamma)$. Then the simple Yetter-Drinfeld modules $V$ over $H$ for which the Nichols algebra $\B(V)$ is finite-dimensional are precisely those classified in Tables \ref{tab:dim1_classification}--\ref{tab:dim6_classification} following Lemma \ref{lem:finiten-nichols-classification}.
\end{theorem}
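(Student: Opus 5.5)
The plan is to reduce the statement to two finite checks: compute the braiding on each simple Yetter-Drinfeld module $V(\a,\b,x^rg^i)$ from Theorem \ref{thm:YDmodule}, and then match that braiding against known classifications of finite-dimensional Nichols algebras. First I will describe $V=V(\a,\b,x^rg^i)$ explicitly. Its $H$-coaction is concentrated on a finite set of group-likes $x^{r}g^{j}$, since $x$ and $g$ are group-like or skew-primitive-type in $B(n,w,\gamma)$. The element $y$ should act nilpotently or via a chain between homogeneous components. I will then write the braiding $c(u\otimes v)=u_{(-1)}\cdot v\otimes u_{(0)}$ on a homogeneous basis.

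The key structural observation I expect is that the braiding on $V$ is of one of two forms. In the first, $V$ is of diagonal type: this happens when $y$ acts by zero, which should occur exactly under some condition on $\a,\b,r,i$ forced by the relation $y^n$ proportional to $1-x^{w}g^{\ldots}$ and by $\a^w=\b^n$. In the second, $V$ is of rack type with a constant or nonconstant cocycle, coming from the action of $g$ permuting homogeneous components. So I split into these cases according to whether the support of the coaction is a single conjugacy-like orbit on which $g$ acts with trivial stabilizer.

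\textbf{Diagonal case.} I will compute the braiding matrix $(q_{kl})$, with entries that are roots of unity built from $\a,\b$, the primitive root $\gamma$, and $r,i$. I then invoke Heckenberger's classification of arithmetic root systems. Since $V$ is simple of dimension at most $n$ or $2n$, the generalized Dynkin diagrams that can occur have a cyclic symmetry induced by $g$. This restricts them to Cartan types $A_1$, $A_1\times\cdots\times A_1$ (with $q_{kk}\neq1$ a root of unity and $q_{kl}q_{lk}=1$), $A_2$, and a few small cases. This is where Lemma \ref{lem:finiten-nichols-classification} comes in, and it will produce the rows of the tables for small dimensions.

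\textbf{Non-diagonal case.} Here I will identify $V$ with a Yetter-Drinfeld module over a dihedral-type quotient group, realizing the braiding as a rack braiding given by a dihedral rack (or an affine rack) with a 2-cocycle. I then quote the known results on finite-dimensional Nichols algebras over dihedral racks (Andruskiewitsch-Fantino and others), which allow finite-dimensional Nichols algebras only for small ranks with cocycle $-1$. I expect this to give the entries for dimensions $3$, $4$ and $6$ in the tables. In those entries the dimensions $12$, $576$ and $576$ can appear via Fomin-Kirillov algebras, so $\dim V\le 6$ is forced.

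\textbf{Main obstacle.} The hardest step will be proving that every other simple module has an infinite-dimensional Nichols algebra. My first approach is to find a braided subspace of diagonal type with an infinite root system, for example a point with $q_{kk}=1$ or a non-arithmetic rank-two diagram, and then use the fact that $\B(U)\hookrightarrow\B(V)$ for braided subspaces $U$. When $\dim V$ is large and the support is a dihedral rack of size greater than $5$, I plan to appeal to the rack criteria (type D or collapsing), or to exhibit a decomposable subrack with infinite GK-dimension. Finally, I will tabulate the surviving parameter conditions in terms of $\a,\b,r,i$.
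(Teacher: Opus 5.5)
Your proposal has a genuine gap. It rests on the dichotomy ``diagonal or rack type'', and that dichotomy is false for $B(n,w,\gamma)$, so neither of your two branches applies.

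By Theorem~\ref{thm:YDmodule}, each simple object is $V(\a,\b,x^rg^i)$ with basis $v_k=y^k\cdot v_0$, $0\le k\le m\le n-1$, and coaction $\delta(v_k)=\sum_{l\le k}\lambda_\b^i(k,l)\,y^{k-l}x^rg^{i-k}\otimes v_l$. This coaction is not supported on group-likes. Moreover $G(H)=\langle g\rangle\times\langle x\rangle$ is abelian, so no dihedral (or any non-trivial) rack occurs; the non-diagonal part of the braiding comes from the skew-primitive $y$. Concretely, $c(v_1\otimes v_0)=\a^r\b^{i-1}\bigl((\b-\gamma^{-i})\,v_1\otimes v_0+v_0\otimes v_1\bigr)$, and for $n\ge 2$ one has $\b\ne\gamma^{-i}$ exactly when $\dim V\ge 2$.

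So the braiding is of triangular type, and it is diagonal in the standard basis only when $V$ is one-dimensional. Consequences for your plan:
\begin{itemize}
\item Your ``diagonal case'' with $A_1\times\cdots\times A_1$ or $A_2$ forced by a cyclic symmetry does not occur.
\item The dihedral-rack/Fomin--Kirillov branch has nothing to attach to, and neither does the bound $\dim V\le 6$ you derive from it.
\item You assume all braiding scalars are roots of unity, which fails when $\a,\b\notin R_\infty$.
\end{itemize}

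The missing idea is the reduction to rank-two diagonal type, which requires splitting the types into $\mathcal{T}_1$ and $\mathcal{T}_2$.

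For $\mathcal{T}_1$, proceed as follows.
\begin{itemize}
\item $V$ is a simple Yetter--Drinfeld module over the finite-dimensional algebra $A(d,m,\gamma,1)$ (Lemma~\ref{lem:relization}). This algebra is a cocycle twist of $\B(V_0)\#\k G$ with $\B(V_0)=\k[y]/(y^n)$, and twisting preserves finite-dimensionality of Nichols algebras.
\item The Andruskiewitsch--Angiono correspondence $\lambda\mapsto L(\lambda)=\operatorname{ad}_c\B(V_0)(\lambda)$ then gives $\dim\B(V)<\infty$ iff $\dim\B(V_0\oplus V_1)<\infty$. Here $V_0\oplus V_1$ is of diagonal type with braiding-matrix rows $(\gamma,\b^{-1})$ and $(\gamma^{-i},\a^r\b^i)$ (Corollary~\ref{cor:todiagonal}).
\item One then runs through all of Heckenberger's rank-two diagrams. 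The dimension is $\dim V=1-a_{12}$, where $a_{12}$ is the Cartan entry at the vertex of $y$. This is where the dimensions $1$ to $6$ in the tables come from.
\end{itemize}

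For $\mathcal{T}_2$ there is no finite-dimensional realization, and your braided-subspace idea does not suffice as stated. Among the standard basis vectors, only $v_0$ spans a braided subspace. Its scalar $\a^r\b^i$ can be a non-trivial root of unity even in $\mathcal{T}_2$: for example $n=w=2$, $\a=-\b$, $x^rg^i=x^{-1}g$ gives $-1$. The paper instead uses triangularity (Lemma~\ref{lem:zhibiaobuzeng}: the index sum cannot increase under $\Delta_{1^m}$). This shows that the $v_i^{\otimes m}$-coefficient of $\Delta_{1^m}(v_i^{\otimes m})$ equals $\prod_{l=1}^m\bigl(1+\a^r+\cdots+(\a^r)^{l-1}\bigr)$. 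That product is nonzero because $\a^r$ is $1$ or not a root of unity.
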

It is worth noting that $ H $ has infinitely many non-isomorphic finite-dimensional simple Yetter-Drinfeld modules, most of which cannot be realized as Yetter-Drinfeld modules over a finite-dimensional pointed Hopf algebra (see Remark \ref{rmk:nonunique_realization}).

The paper is organized as follows. In Section \ref{section2}, we recall the basic definitions and notations concerning Yetter-Drinfeld modules and Nichols algebras. We also briefly review the concepts of bosonization and 2-cocycle deformations. Furthermore, we introduce generalized Liu algebras along with their finite-dimensional counterparts. In Section \ref{section3}, we first prove that every simple Yetter-Drinfeld module over a generalized Liu algebra is finite-dimensional. We then show that each such module admits a standard basis and explicitly describe its comodule structure with respect to this basis. In Section \ref{section4}, we present the explicit construction of all simple Yetter-Drinfeld modules and completely classify their isomorphism classes. Finally, in Section \ref{section5}, we determine precisely those modules $V$ for which the associated Nichols algebra $\B(V)$ is finite-dimensional.

\section{Preliminaries}\label{section2}

Throughout this paper $\k$ denotes an algebraically closed field of characteristic 0 and all spaces are over $\k$. The tensor product over $\k$ is denoted simply by $\otimes$. The set of natural numbers including zero is denoted by $\mathbb{N}_0$, and the positive integers are denoted by $\mathbb{N}$. For any integer $m \geq 1$, let $R_m$ denote the set of primitive $m$-th roots of unity, and let $R_\infty$ denote the group of all roots of unity.

The symbol $ H $ will always denote a Hopf algebra over $ \k $ with comultiplication $ \Delta $, counit $ \varepsilon $, and antipode $ S $. We will use the Sweedler’s sigma notation \cite{Swe69} for coproduct and coaction: $ \Delta(h)=\sum h_{(1)}\otimes h_{(2)} $ for coalgebras and $ \delta(v)=\sum v_{(-1)} \otimes v_{(0)} $ for left comodules. The summation sign is often omitted when no explicit computation is involved. Denote by $ G(H)$ the set of group-like elements of $ H $. For $ g,h \in G(H)  $, the linear space of $ (g,h) $-primitives is:
\[
\mathcal{P}_{g,h} (H)= \left\{x\in H \mid \Delta(x)=x\otimes g+h\otimes x \right\}.
\]
We refer to \cite{MonS,Swe69} for the basics about Hopf algebras.

\subsection{Braided vector spaces and Yetter-Drinfeld modules}

We briefly recall the basic concept of a braided vector space (see, e.g., \cite[Definition 1.5.1]{HS20}). Let $ V $ be a vector space over $ \k $ and $ c\in \mathrm{Aut}(V \otimes V) $. The pair $ (V, c) $ is called a \emph{braided vector space} if $ c $ satisfies
\begin{equation*}
	(c\otimes \mathrm{id})(\mathrm{id}\otimes c )(c\otimes \mathrm{id}) = (\mathrm{id}\otimes c )(c\otimes \mathrm{id})(\mathrm{id}\otimes c). 
\end{equation*}
Here, the map $c$ is  called a \emph{braiding}.

Let $ I $ be an index set, and let $ (q_{i,j})_{i,j\in I} $ be a family of non-zero scalars in $ \k $. Let $ V $ be a vector space with basis $ \left\{x_i\right\}_{i\in I} $. Define a linear map $c: V \otimes V \to V \otimes V$ by
\begin{equation*}
	c(x_i \otimes x_j) = q_{i,j} x_j \otimes x_i \quad \text{for all } i,j \in I.
\end{equation*}
Then $(V,c)$ is a braided vector space (see, e.g., \cite[Remark 1.5.4]{HS20}). One says that $ (V, c) $ is a braided vector space of \emph{diagonal type}. The matrix $ (q_{i,j})_{i,j\in I} $
is called the braiding matrix of $ (V, c) $ with respect to the basis $\left\{x_i\right\}_{i\in I} $.
Braidings of diagonal type form the simplest class of braidings. Another interesting class is the braidings of \emph{triangular type}, studied by Stefan \cite{ST04}. He later showed in \cite{ST07} that such braidings come from certain Yetter-Drinfeld modules over pointed Hopf algebras with an abelian coradical.

We now recall the notion of Yetter-Drinfeld modules. Throughout this article, we work exclusively with left-left Yetter-Drinfeld modules, namely, those that are simultaneously left modules and left comodules.

\begin{definition}\emph{(}\cite[Definition 3]{AN17}\emph{)}
Let $ H $ be a Hopf algebra, a Yetter-Drinfeld module over $ H $ is a vector space provided with
\begin{itemize}
	\item [(1)] a structure of left $ H $-module $ \cdot :H\otimes V\rightarrow V $ and
	\item [(2)] a structure of left $ H $-comodule $ \delta : V\rightarrow H \otimes V  $, such that
	\item [(3)] for all $ h\in H $ and $v\in V$,the following compatibility condition holds:	
	\begin{equation}\label{compatibilitycondition}
		\delta(h\cdot v)=h_{(1)}v_{(-1)}S(h_{(3)})\otimes h_{(2)}\cdot v_{(0)}.
	\end{equation}
\end{itemize}
\end{definition}
The category of Yetter-Drinfeld modules over $H$ is denoted by ${}^H_H \mathcal{YD}$. The morphisms in this category are linear maps that preserve both the action and the coaction. If the antipode $S$ is bijective, ${}^H_H \mathcal{YD}$ becomes a braided monoidal category. For any $V, W \in {}^H_H \mathcal{YD}$, the braiding $c_{V,W} : V \otimes W \to W \otimes V$ is defined by
$$c_{V,W}(v \otimes w) = v_{(-1)} \cdot w \otimes v_{(0)}$$
and its inverse $c_{V,W}^{-1}: W \otimes V \to V \otimes W$ is given by
$$c_{V,W}^{-1}(w \otimes v) = v_{(0)} \otimes S^{-1}(v_{(-1)}) \cdot w$$
for all $v \in V$ and $w \in W$.

In particular, taking $W = V$ shows that every Yetter-Drinfeld module $V$ is naturally a braided vector space with the braiding $c_{V,V}$. However, the converse requires an extra condition: a braided vector space can be realized as a Yetter-Drinfeld module over a Hopf algebra with a bijective antipode only if its braiding is rigid (see \cite[Definition 4.2.11]{HS20}). Lyubashenko \cite{Ly86} first showed this for symmetries, and Schauenburg \cite{Sch92} later proved the general case.

In practice, the following lemma provides a  useful way to check the compatibility condition.

\begin{lemma}\emph{(}\cite[Lemma 2.5]{ZLY25}\emph{)}\label{lem:compatiable}
	Let $H$ be a Hopf algebra, and let $(V, \cdot, \delta)$ be a left $H$-module and a left $H$-comodule. Then $(V, \cdot, \delta)$ is a Yetter-Drinfeld module over $H$ if and only if the compatibility condition \eqref{compatibilitycondition} holds for a set of algebra generators of $H$ and a set of spanning vectors of $V$.
\end{lemma}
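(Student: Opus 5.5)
The plan is to prove only the nontrivial direction, sufficiency. Necessity is immediate, since the generators form a subset of $H$ and the spanning vectors form a subset of $V$. So assume \eqref{compatibilitycondition} holds for every $h$ in a generating set $X$ of $H$ and every $v$ in a spanning set $Y$ of $V$. Both sides of \eqref{compatibilitycondition} are linear in $v$ for fixed $h$, so the identity extends at once from $Y$ to all $v\in V$. Define
\[
A=\{h\in H \mid \delta(h\cdot v)=h_{(1)}v_{(-1)}S(h_{(3)})\otimes h_{(2)}\cdot v_{(0)} \text{ for all } v\in V\}.
\]
Then $A$ is a linear subspace of $H$ and contains $X$. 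It suffices to show that $A$ is a subalgebra, for then $A=H$.

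The first step is to check that $1\in A$. This follows from $\Delta^{(2)}(1)=1\otimes1\otimes1$ and $S(1)=1$, since both sides then reduce to $\delta(v)$.

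The second step is to show that $A$ is closed under products. Let $h,k\in A$ and $v\in V$. Since $V$ is a module, $hk\cdot v=h\cdot(k\cdot v)$. Apply the condition for $h$ to the vector $k\cdot v$, and then substitute the condition for $k$ into $\delta(k\cdot v)$. Using coassociativity to renumber the Sweedler indices, this gives
\[
\delta(hk\cdot v)=h_{(1)}k_{(1)}v_{(-1)}S(k_{(3)})S(h_{(3)})\otimes h_{(2)}k_{(2)}\cdot v_{(0)}.
\]
Next, since $\Delta$ is an algebra map, $(hk)_{(1)}\otimes(hk)_{(2)}\otimes(hk)_{(3)}=h_{(1)}k_{(1)}\otimes h_{(2)}k_{(2)}\otimes h_{(3)}k_{(3)}$. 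Since $S$ is an algebra anti-homomorphism, $S(k_{(3)})S(h_{(3)})=S(h_{(3)}k_{(3)})$. Together these show that the right-hand side above is exactly the condition for $hk$, so $hk\in A$.

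The only delicate point is the Sweedler bookkeeping when the condition for $k$ is substituted inside the expression for $h$. The condition for $h$ uses the coaction components of $w=k\cdot v$, and we have to replace $w_{(-1)}\otimes w_{(0)}$ by $k_{(1)}v_{(-1)}S(k_{(3)})\otimes k_{(2)}\cdot v_{(0)}$. This is legitimate because the identity holds as an equality of tensors in $H\otimes V$. Applying the linear map $a\otimes u\mapsto h_{(1)}aS(h_{(3)})\otimes h_{(2)}\cdot u$ to both sides then gives the displayed formula. Once closure under products is established, $A$ is a subalgebra containing $X$, so $A=H$ and $V$ is a Yetter-Drinfeld module.
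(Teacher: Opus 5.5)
Your proof is correct and complete. The paper gives no proof of its own and simply cites \cite[Lemma 2.5]{ZLY25}, and your argument (extend by linearity in $v$, then show the set $A$ of admissible $h$ is a unital subalgebra using that $\Delta^{(2)}$ is multiplicative and $S$ is anti-multiplicative) is the standard proof of this fact.

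One small addition is useful for how the lemma is applied later. The paper's proof of Lemma~\ref{lem:yanzhengYDmo} checks only $x,g,y$, but $x^{-1}$ is not in the unital subalgebra generated by $x,g,y$. Your framework covers this: if $u\in A$ is group-like, then $u^{-1}\in A$. To see it, apply the identity for $u$ to $u^{-1}\cdot v$, then apply the map $a\otimes z\mapsto u^{-1}au\otimes u^{-1}\cdot z$.
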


We conclude this subsection with the notion of a comatrix, which provides a matrix representation of a comodule. This concept will be essential in Section~\ref{section3}.

\begin{definition}
Let $(C, \Delta, \varepsilon)$ be a coalgebra over $\k$. A square matrix $\A=(c_{ij})_{r\times r}$ with entries in $C$ is called a \emph{comatrix} if, for all $1\leq i,j \leq r$, we have
\[
\Delta(c_{ij}) = \sum_{l=1}^r c_{il}\otimes c_{lj} \quad \text{and} \quad \varepsilon(c_{ij}) = \delta_{i, j},
\]
where $\delta_{i, j}$ denotes the Kronecker delta.
\end{definition}

\begin{lemma}\emph{(}\cite[Lemma 2.1.1]{HS20}\emph{)}\label{lem:comoduleequicodition}
	Let $(C,\Delta,\varepsilon)$ be a coalgebra over $\k$, and let $V$ be an $n$-dimensional vector space with a basis $\{v_1, \dots, v_n\}$. Consider a linear map $\delta : V \to C \otimes V$ defined by
	\[
	\delta(v_k) = \sum_{l=1}^n c_{kl} \otimes v_l \quad (1 \leq k \leq n, \, c_{kl} \in C).
	\]
	Let $\A=(c_{ij})_{n\times n}$. Then $(V,\delta)$ is a left $C$-comodule if and only if $\A$ is a comatrix.
\end{lemma}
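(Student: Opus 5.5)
This is the standard correspondence between comodule structures and comatrices, and I would prove it by translating each comodule axiom into an identity between matrix entries and then comparing coefficients against the basis $\{v_1,\dots,v_n\}$ of $V$.

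\textbf{Coassociativity.} Write $\delta(v_k)=\sum_l c_{kl}\otimes v_l$. Applying $(\Delta\otimes\mathrm{id})\delta$ to $v_k$ gives
\[
(\Delta\otimes\mathrm{id})\delta(v_k)=\sum_{j}\Delta(c_{kj})\otimes v_j .
\]
Applying $(\mathrm{id}\otimes\delta)\delta$ instead, and using linearity of $\delta$, gives
\[
(\mathrm{id}\otimes\delta)\delta(v_k)=\sum_{l}c_{kl}\otimes\delta(v_l)=\sum_{j}\Bigl(\sum_{l}c_{kl}\otimes c_{lj}\Bigr)\otimes v_j .
\]
Since the $v_j$ form a basis, the tensors $\{\,\cdot\otimes v_j\}$ let us read off components. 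Every element of $C\otimes C\otimes V$ can be written uniquely as $\sum_j X_j\otimes v_j$ with $X_j\in C\otimes C$. Hence coassociativity on $v_k$ holds if and only if
\[
\Delta(c_{kj})=\sum_l c_{kl}\otimes c_{lj}\quad\text{for all } j.
\]

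\textbf{Counit.} Similarly, $(\varepsilon\otimes\mathrm{id})\delta(v_k)=\sum_j\varepsilon(c_{kj})v_j$. This equals $v_k$ precisely when $\varepsilon(c_{kj})=\delta_{k,j}$ for all $j$.

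\textbf{Conclusion.} Both comodule axioms are linear in the argument, so it suffices to check them on the basis vectors $v_k$. Ranging over $k$ then gives exactly the two comatrix conditions, and each step above is an equivalence, which proves both directions at once.

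There is no real obstacle here. The only point requiring care is the uniqueness of the expansion $\sum_j X_j\otimes v_j$, which follows from $C\otimes C\otimes V\cong\bigoplus_j (C\otimes C)\otimes v_j$ because $\{v_j\}$ is a basis of $V$.
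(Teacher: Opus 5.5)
Your proof is correct: expanding both coassociativity and the counit axiom in the basis $\{v_j\}$ and comparing the uniquely determined coefficients in $C\otimes C$ (respectively $\k$) gives exactly the two comatrix identities $\Delta(c_{kj})=\sum_l c_{kl}\otimes c_{lj}$ and $\varepsilon(c_{kj})=\delta_{k,j}$. Each step is reversible, so both directions follow. The paper gives no proof of its own and simply cites Heckenberger--Schneider; your direct coefficient comparison is the standard argument behind that citation, and your index convention matches the paper's definition of a comatrix for left comodules.
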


\subsection{Nichols Algebras}

We now turn to Nichols algebras. Although there are several equivalent ways to define them, we follow the approach of Heckenberger and Schneider \cite{HS20}.

\begin{definition}\emph{(}\cite[Definition 1.3.12]{HS20}\emph{)}
	Let  $C = \bigoplus_{n \in \mathbb{N}_0} C(n)$  be an $\mathbb{N}_0$-graded coalgebra with projections $\pi_n: C \to C(n) $ for all $n \geq 0 $.  For each $n \geq 1$, define the map $\Delta_{1^n}^C$ as the composition:
	\begin{equation*}
		\Delta_{1^n}^C : C(n) \subseteq C \xrightarrow{\Delta^{n-1}} C^{\otimes n} \xrightarrow{\pi_1^{\otimes n}} C(1)^{\otimes n},
	\end{equation*}	
	where $\Delta^{0}=\operatorname{id}_C:C\to C$ and $\Delta^{n}=(\operatorname{id}_C\otimes \Delta^{n-1})\Delta :C \to C^{\otimes (n+1)} $.
\end{definition}

Now, let $H$ be a Hopf algebra with a bijective antipode, and take $V \in {}^H_H \mathcal{YD}$. The tensor algebra $T(V)$ naturally forms an $\mathbb{N}_0$-graded Hopf algebra in ${}^H_H \mathcal{YD}$. Its comultiplication is the algebra map $\Delta : T(V) \to T(V) \underline{\otimes} T(V)$ defined by $\Delta(v) = v \otimes 1 + 1 \otimes v$ for all $v \in V$. Because $T(V)$ is also a standard $\mathbb{N}_0$-graded coalgebra, we can use the construction above to define the map $\Delta_{1^n}^{T(V)}$ on it.

\begin{definition}\emph{(}\cite[Definition 7.1.13]{HS20}\emph{)}
	Let $H$ be a Hopf algebra with bijective antipode, and $V \in {}^H_H \mathcal{YD}$. Then
	\[
	\B(V) = T(V) \Big/ \bigoplus_{n \geq 2} \ker\left(\Delta_{1^n}^{T(V)}\right)
	\]
	is called the Nichols algebra of  $V$. $\mathcal{B}(V) $ is called diagonal type if
	$(V, c_{V,V})$ is of diagonal type.
\end{definition}

By construction, the Nichols algebra $\B(V) = \bigoplus_{n\geq 0} \B^n(V)$ is an $\mathbb{N}_0$-graded vector space. Its homogeneous components are explicitly given by
\[
\B^0(V) = \k 1, \quad \B^1(V) = V, \quad \text{and} \quad \B^n(V) = V^{\otimes n} \Big/ \ker\left(\Delta_{1^n}^{T(V)}\right) \text{ for } n \geq 2.
\]
In addition, $\B(V)$ becomes an $\mathbb{N}_0$-graded Hopf algebra in ${}^H_H \mathcal{YD}$ with the following properties:
\begin{itemize}
	\item $\B^0(V) \cong \k $,  $ \B^1(V)\cong V $  in  $ ^H_H \mathcal{YD} $,
	\item $\B(V)$ is generated as an algebra by $\B^1 (V)$, and
	\item $\B(V)$ is strictly graded, that is, $P(\B(V))=\B^1(V)$.
\end{itemize}
It is a well-known fact that these properties characterize the Nichols algebra (see, e.g., \cite[Theorem 5.7]{And02}).

We can also view this quotient through the braided symmetrizer maps $S_n: V^{\otimes n} \to V^{\otimes n}$ associated with $(V, c_{V,V})$ (see \cite[Definition 1.8.10]{HS20}). By \cite[Corollary 1.9.7]{HS20}, the map $\Delta_{1^n}^{T(V)}$ is exactly $S_n$.  Furthermore, one can more generally define the Nichols algebra of any braided vector space, even without an underlying Hopf algebra (see \cite[Definition 7.1.1]{HS20}).

\begin{remark}\label{rmk:exampledim1}
\normalfont
Suppose $V = \k v$ is a $1$-dimensional Yetter-Drinfeld module (or simply a braided vector space) with braiding $c(v \otimes v) = q v \otimes v$ for some $q \in \k^*$. Recall that we assume $\operatorname{char}(\k)=0$ throughout this paper. Under this assumption, it is a well-known fact that the Nichols algebra $\B(V)$ is finite-dimensional if and only if $q$ is a non-trivial root of unity (that is, a root of unity different from $1$). For more details, see, e.g., \cite[Example 1.10.1]{HS20}.
\end{remark}

\subsection{Bosonization}

Let $H$ be a Hopf algebra over $\k$. Recall that the category $\YD{H}$ is a braided monoidal category. If $R$ is a Hopf algebra in the category $\YD{H}$ (often referred to as a \textit{braided Hopf algebra}), we can construct a new ordinary Hopf algebra, denoted by $R \biproduct H$, called the \textit{bosonization} \cite{Mj94} or \textit{Radford biproduct} \cite{Ra85}  of $R$ by $H$.

\begin{definition}\emph{(}\cite[Theorem 1 and Proposition 2]{Ra85}\emph{)}
Let $R \in \YD{H}$ be a braided Hopf algebra. The bosonization $R \biproduct H$ is defined as follows:
\begin{itemize}
	\item[(1)] As a vector space, $R \biproduct H = R \otimes H$. We denote the element $r \otimes h$ by $r \biproduct h$.
	\item[(2)] The multiplication is given by the smash product:
		\begin{equation}\label{eq:bosonization-mult}
			(r \biproduct h)(s \biproduct l) = r(h_{(1)} \cdot s) \biproduct h_{(2)}l,
		\end{equation}
		for all $r, s \in R$ and $h, l \in H$. Here, the dot $\cdot$ denotes the action of $H$ on $R$.
	\item[(3)] The comultiplication is given by the smash coproduct:
		\begin{equation}\label{eq:bosonization-comult}
			\Delta(r \biproduct h) = (r^{(1)} \biproduct (r^{(2)})_{(-1)}h_{(1)}) \otimes ((r^{(2)})_{(0)} \biproduct h_{(2)}),
		\end{equation}
		where $\Delta_R(r) = r^{(1)} \otimes r^{(2)}$ denotes the braided coproduct in $R$, and $\delta(r) = r_{(-1)} \otimes r_{(0)}$ denotes the coaction of $H$ on $R$.
	\item[(4)] The unit is $1_R \biproduct 1_H$ and the counit is $\varepsilon(r \biproduct h) = \varepsilon_R(r)\varepsilon_H(h)$.
	\item[(5)]The antipode $S$ is given by the formula:
	    \begin{equation}
		   S(r \biproduct h) = (1 \biproduct S_H(r_{(-1)}h)) (S_R(r_{(0)}) \biproduct 1),
	    \end{equation}
	where $S_R$ is the antipode of $R$ in the category $\YD{H}$.
\end{itemize}

\end{definition}

The bosonization $A = R \biproduct H$ comes equipped with canonical Hopf algebra morphisms $\iota: H \to A$ given by $\iota(h) = 1_R \biproduct h$, and $\pi: A \to H$ given by $\pi(r \biproduct h) = \varepsilon_R(r)h$, satisfying $\pi \circ \iota = \operatorname{id}_H$. Conversely, a fundamental result by Radford \cite{Ra85} states that this splitting property characterizes the bosonization structure. Specifically, let $A$ and $H$ be Hopf algebras equipped with Hopf algebra morphisms $\iota: H \to A$ and $\pi: A \to H$ such that $\pi \circ \iota = \operatorname{id}_H$. Then $A \cong R \biproduct H$, where $R = A^{\text{co } \pi} = \{ a \in A \mid (\operatorname{id} \otimes \pi)\Delta(a) = a \otimes 1 \}$ is the algebra of coinvariants, which carries a natural braided Hopf algebra structure in $\YD{H}$.

\subsection{2-Cocycle Deformations}

In this subsection, we recall the procedure of twisting the multiplication of a Hopf algebra by a 2-cocycle and its consequences on the category of Yetter-Drinfeld modules. We follow the exposition in \cite[Section 3.4.3]{AN17}. 

Let $H$ be a Hopf algebra over $\k$.

\begin{definition}\emph{(}\cite[Definition 10]{AN17}\emph{)}
A linear map $\sigma: H \otimes H \to \k$ is called a \textit{unitary 2-cocycle} if it is invertible with respect to the convolution product, satisfies the cocycle condition
\begin{equation*}
	\sigma(x_{(1)}, y_{(1)}) \sigma(x_{(2)}y_{(2)}, z) = \sigma(y_{(1)}, z_{(1)}) \sigma(x, y_{(2)}z_{(2)}),
\end{equation*}
and the unitary condition $\sigma(x, 1) = \sigma(1, x) = \varepsilon(x)$ for all $x, y, z \in H$.
\end{definition}

Given a unitary 2-cocycle $\sigma$, one can define a new Hopf algebra $H_{\sigma} = (H, \cdot_{\sigma}, \Delta)$. The coalgebra structure remains unchanged, while the new multiplication is defined by
\begin{equation*}
	x \cdot_{\sigma} y = \sigma(x_{(1)}, y_{(1)}) x_{(2)}y_{(2)} \sigma^{-1}(x_{(3)}, y_{(3)}), \quad \text{for } x, y \in H.
\end{equation*}

A fundamental result relates the representation theories of $H$ and $H_{\sigma}$. The following theorem establishes an equivalence between their Yetter-Drinfeld categories (see \cite[Theorem 4]{AN17}).

\begin{theorem} \label{thm:cocycle-equivalence}
Let $\sigma: H \otimes H \to \k$ be a unitary 2-cocycle.
	\begin{itemize}
		\item[(1)] There exists an equivalence of braided categories
		\[
		\mathcal{T}_{\sigma}: \YD{H} \to \YD{H_{\sigma}}, \quad V \mapsto V_{\sigma}.
		\]
		The functor $\mathcal{T}_{\sigma}$ is the identity on the underlying vector spaces, morphisms, and coactions. It transforms the action of $H$ on $V$ to the twisted action $\cdot_{\sigma}: H_{\sigma} \otimes V_{\sigma} \to V_{\sigma}$ given by:
		\begin{equation}
			h \cdot_{\sigma} v = \sigma(h_{(1)}, v_{(-1)}) (h_{(2)} \cdot v_{(0)})_{(0)} \sigma^{-1}((h_{(2)} \cdot v_{(0)})_{(-1)}, h_{(3)}),
		\end{equation}
		for all $h \in H_{\sigma}$ and $v \in V_{\sigma}$. The monoidal structure on $\mathcal{T}_{\sigma}$ is given by the natural isomorphism
		\[
		J_{V,W}: (V \otimes W)_{\sigma} \to V_{\sigma} \otimes W_{\sigma},
		\]
		defined by the formula:
		\begin{equation}
			J_{V,W}(v \otimes w) = \sigma(v_{(-1)}, w_{(-1)}) v_{(0)} \otimes w_{(0)}, \quad \text{for } v \in V, w \in W.
		\end{equation}
		
		\item[(2)] The functor $\mathcal{T}_{\sigma}$ preserves Nichols algebras. Specifically,
		\[
		\B(V)_{\sigma} \cong \B(V_{\sigma})
		\]
		as objects in $\YD{H_{\sigma}}$. 
	\end{itemize}
\end{theorem}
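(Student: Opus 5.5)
The plan is to use the fact that twisting by $\sigma$ does not change the comodule category, and to obtain everything else from that. Since $H$ and $H_\sigma$ have the same coalgebra, ${}^H\mathcal{M}$ and ${}^{H_\sigma}\mathcal{M}$ coincide as categories. The first step is to check that $(\mathrm{id}, J)$, with $J_{V,W}(v\otimes w)=\sigma(v_{(-1)},w_{(-1)})v_{(0)}\otimes w_{(0)}$, is a monoidal equivalence ${}^H\mathcal{M}\to{}^{H_\sigma}\mathcal{M}$. This means three things:
\begin{itemize}
\item $J_{V,W}$ is $H_\sigma$-colinear from the coaction $v_{(-1)}w_{(-1)}$ to the coaction $v_{(-1)}\cdot_\sigma w_{(-1)}$, which follows by expanding the definition of $\cdot_\sigma$ and using that $\sigma^{-1}$ is the convolution inverse of $\sigma$;
\item $J$ satisfies the hexagon-type coherence with the associators, which is exactly the cocycle condition;
\item $J$ is compatible with the unit object, which is the unitary condition.
\end{itemize}
$J$ is invertible, with inverse given by $\sigma^{-1}$.

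The second step is to pass to Yetter--Drinfeld modules. For a Hopf algebra with bijective antipode, $\YD{H}$ is braided equivalent to the (left) Drinfeld center $\mathcal{Z}({}^H\mathcal{M})$. A monoidal equivalence induces a braided equivalence of centers, so we get a braided equivalence $\YD{H}\to\YD{H_\sigma}$. What remains is to identify it explicitly. A half-braiding on a comodule $V$ corresponds to the action through $c_{V,H}$ applied to the regular comodule $H$, followed by $\varepsilon$. Transporting the half-braiding along $J$ and $J^{-1}$ and reading off the action should give precisely the stated formula
\[
h\cdot_\sigma v=\sigma(h_{(1)},v_{(-1)})(h_{(2)}\cdot v_{(0)})_{(0)}\sigma^{-1}((h_{(2)}\cdot v_{(0)})_{(-1)},h_{(3)}).
\]

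As a safety net, or if the center argument seems too abstract, one can verify directly that this formula defines an $H_\sigma$-module structure satisfying \eqref{compatibilitycondition} for $H_\sigma$, and that $J_{V,W}$ intertwines the actions and the braidings $c$. The module axiom $(h\cdot_\sigma k)\cdot_\sigma v=h\cdot_\sigma(k\cdot_\sigma v)$ is where the cocycle identity must be used twice, together with the YD condition for $H$ to move coactions past actions. I expect this Sweedler bookkeeping to be the main obstacle, and would organize it by first proving the auxiliary identity $\delta(h\cdot_\sigma v)=h_{(1)}\cdot_\sigma v_{(-1)}\cdot_\sigma S_\sigma(h_{(3)})\otimes h_{(2)}\cdot_\sigma v_{(0)}$. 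The remaining properties of the functor (it is the identity on morphisms and coactions) are then immediate.

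For part (2), a braided monoidal equivalence $(\mathcal{T}_\sigma,J)$ sends a Hopf algebra $R$ in $\YD{H}$ to a Hopf algebra $R_\sigma$ in $\YD{H_\sigma}$. The new product is $m\circ J^{-1}$ and the new coproduct is $J\circ\Delta$; the Hopf axioms transfer because $J$ is monoidal and the braidings correspond. Applied to $R=\B(V)$, the grading is preserved because $J$ respects degrees, and $\B(V)_\sigma$ is still generated in degree $1$ by $V_\sigma$. Its primitives are exactly the old primitives, since $J_{R,R}(r\otimes 1)=r\otimes 1$ and $J_{R,R}(1\otimes r)=1\otimes r$ by unitarity. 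Hence $\B(V)_\sigma$ is a strictly graded, degree-one generated connected graded braided Hopf algebra with degree-one part $V_\sigma$. By the characterization of Nichols algebras recalled above, $\B(V)_\sigma\cong\B(V_\sigma)$.
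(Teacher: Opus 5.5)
The paper does not prove this statement. It is recalled from \cite[Theorem 4]{AN17} (a result going back to Majid--Oeckl) and used as a black box. Your route is the standard conceptual proof of that result:
\begin{enumerate}
\item make the identity functor ${}^H\mathcal{M}\to{}^{H_\sigma}\mathcal{M}$ monoidal using $\sigma$;
\item pass to Drinfeld centers;
\item transport braided Hopf algebras and invoke the characterization of $\B(V)$ by grading, generation in degree one and $P(\B(V))=\B^1(V)$.
\end{enumerate}
Compared with the citation, it explains where the formula for $h\cdot_\sigma v$ comes from and makes braiding-compatibility and part (2) formal. The structure of your part (2) argument is sound. However, your first step is false as written, and the error propagates.

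With the paper's convention $a\cdot_\sigma b=\sigma(a_{(1)},b_{(1)})a_{(2)}b_{(2)}\sigma^{-1}(a_{(3)},b_{(3)})$, set $f_\tau(v\otimes w)=\tau(v_{(-1)},w_{(-1)})\,v_{(0)}\otimes w_{(0)}$. Applying the $H_\sigma$-coaction of $V_\sigma\otimes W_\sigma$ after $f_\tau$ gives
\[
\tau(v_{(-4)},w_{(-4)})\,\sigma(v_{(-3)},w_{(-3)})\,v_{(-2)}w_{(-2)}\,\sigma^{-1}(v_{(-1)},w_{(-1)})\otimes v_{(0)}\otimes w_{(0)}.
\]
Applying $f_\tau$ after the $H$-coaction of $V\otimes W$ gives $v_{(-2)}w_{(-2)}\,\tau(v_{(-1)},w_{(-1)})\otimes v_{(0)}\otimes w_{(0)}$. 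These agree for $\tau=\sigma^{-1}$, because $\sigma^{-1}*\sigma=\varepsilon\otimes\varepsilon$. For $\tau=\sigma$ you get $\sigma*\sigma$ where a cancellation is needed. Taking $V=W=H$ and applying $\mathrm{id}\otimes\varepsilon\otimes\varepsilon$, equality would force $ab=(\sigma*\sigma)(a_{(1)},b_{(1)})\,a_{(2)}b_{(2)}\,(\sigma*\sigma)^{-1}(a_{(3)},b_{(3)})$ for all $a,b\in H$, which is false in general. So the map built from $\sigma$ is colinear $V_\sigma\otimes W_\sigma\to(V\otimes W)_\sigma$. That is the opposite of the orientation you use, which is also the orientation in the statement as transcribed here.

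This is not cosmetic. In your orientation the coherence square for $J$ becomes $\sigma(x_{(1)}y_{(1)},z)\sigma(x_{(2)},y_{(2)})=\sigma(x,y_{(1)}z_{(1)})\sigma(y_{(2)},z_{(2)})$. That is the identity satisfied by $\sigma^{-1}$, not the cocycle condition. Likewise, transporting the half-braiding would yield the action with $\sigma$ and $\sigma^{-1}$ interchanged.

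Take instead $J_{V,W}\colon V_\sigma\otimes W_\sigma\to(V\otimes W)_\sigma$, $v\otimes w\mapsto\sigma(v_{(-1)},w_{(-1)})v_{(0)}\otimes w_{(0)}$. Then coherence is literally the cocycle identity. Put the central object on the right, with half-braiding $c_{X,V}(x\otimes v)=x_{(-1)}\cdot v\otimes x_{(0)}$, so that no $S^{-1}$ appears. The read-off
\[
(\mathrm{id}\otimes\varepsilon)\circ J_{V,H}^{-1}\circ c_{H,V}\circ J_{H,V}(h\otimes v)=\sigma(h_{(1)},v_{(-1)})\,(h_{(2)}\cdot v_{(0)})_{(0)}\,\sigma^{-1}\bigl((h_{(2)}\cdot v_{(0)})_{(-1)},h_{(3)}\bigr)
\]
is then exactly the stated formula.

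In part (2) the transported product becomes $m\circ J_{R,R}$, i.e.\ $r\otimes s\mapsto\sigma(r_{(-1)},s_{(-1)})\,r_{(0)}s_{(0)}$, and the coproduct becomes $J_{R,R}^{-1}\circ\Delta$. Your unitarity argument for the primitives is unchanged.

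One smaller point: identifying $\mathcal{Z}({}^{H_\sigma}\mathcal{M})$ with $\YD{H_\sigma}$ requires $S_\sigma$ to be bijective. This holds because $S_\sigma(x)=U(x_{(1)})S(x_{(2)})U^{-1}(x_{(3)})$ with $U(x)=\sigma(x_{(1)},S(x_{(2)}))$ (Doi), so $S_\sigma$ differs from $S$ by a bijection of $H$.
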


Since $\mathcal{T}_{\sigma}$ preserves the dimension of Nichols algebras, $\B(V)$ is finite-dimensional if and only if $\B(V_{\sigma})$ is finite-dimensional.

\subsection{Generalized Liu algebras}\label{section2.5}

In this subsection, we briefly introduce generalized Liu algebras and similar Hopf algebras.

\begin{definition}\emph{(}\cite[Section 3.4]{BZ10}\emph{)}\label{def:generalized-Liu}
Let $ n $ and $ w $ be positive integers, and let $ \gamma $ be a primitive $ n $-th root of 1. The generalized Liu algebras, denoted by $ B(n,w,\gamma) $, is generated by $ x^{\pm 1}, g $ and $ y $, subject to the relations
\begin{equation*}
	\begin{cases}
		xx^{-1}=x^{-1}x=1, \ xg=gx, \ xy=yx, \\
		yg=\gamma gy, \\
		y^n=1-x^w=1-g^n.
	\end{cases}
\end{equation*}
The comultiplication, counit and antipode of $ B(n,w,\gamma) $ are given by 
\begin{center}
	$ \Delta(x)=x\otimes x , \quad \Delta(g)=g\otimes g , \quad \Delta(y)=y\otimes g + 1 \otimes y,   $
\end{center}
\begin{center}
	$ \varepsilon(x)=1 , \quad \varepsilon(g)=1 , \quad \varepsilon(y)=0,   $
\end{center}
and
\begin{center}
	$ S(x)=x^{-1} , \quad S(g)=g^{-1} , \quad S(y)=-yg^{-1}.   $
\end{center}
\end{definition}

\begin{remark}
\normalfont{
Let $ H := B(n,w,\gamma) $. Then $ H $  is a pointed Hopf algebra, and its group of group-like elements is
\begin{equation*}
	G(H) =\left\{ g^j x^k \mid 0 \leq j < n,\  k\in \Z \right\}.	
\end{equation*}
A linear basis of $ H $ is given by 
\begin{equation*}
	\left\{ y^i g^j x^k \mid 0 \leq i,j < n,\  k\in \Z \right\}.	
\end{equation*}}
\end{remark}

Next, we introduce another family of finite-dimensional pointed Hopf algebras, denoted by $A(d, m, \gamma, \mu)$. We demonstrate that certain simple Yetter-Drinfeld modules $V$ over $B(n,w,\gamma)$ can be realized as simple Yetter-Drinfeld modules over this new family; see Lemma \ref{lem:relization}. This realization allows us to determine whether $\B(V)$ is finite-dimensional.

\begin{definition}\label{def:finite-generalized-Liu}
Let $d, m, n$ be positive integers such that $n \mid d$. Let $\gamma$ be a primitive $ n $-th root of 1 and $\mu \in \k$. The Hopf algebra  $A(d, m, \gamma,\mu)$  is generated by $ x, g $ and $ y $, subject to the relations
\begin{equation*}
	\begin{cases}
		xg=gx, \quad xy=yx, \\
		yg=\gamma gy, \\
		g^{d}=x^{m}=1, \quad y^n=\mu (1-g^n).
	\end{cases}
\end{equation*}
The comultiplication, counit and antipode of $A(d, w, \gamma)$ are given by 
\begin{center}
	$ \Delta(x)=x\otimes x , \quad \Delta(g)=g\otimes g , \quad \Delta(y)=y\otimes g + 1 \otimes y,   $
\end{center}
\begin{center}
	$ \varepsilon(x)=1 , \quad \varepsilon(g)=1 , \quad \varepsilon(y)=0,   $
\end{center}
and
\begin{center}
	$ S(x)=x^{-1} , \quad S(g)=g^{-1} , \quad S(y)=-yg^{-1}.   $
\end{center}
\end{definition}
%若n不整除d,会迫使y=0, 此时A退化为群代数

\begin{remark}\hfil
\normalfont
\begin{itemize}
	\item[(1)] When $d=n$, the algebra $A(n, m, \gamma, 1)$ is a quotient algebra of $B(n, m, \gamma)$.
	\item[(2)] The algebra $A(d, m, \gamma, \mu)$ is a variant of the Radford algebra \cite{R75}; specifically, it is constructed by adjoining a central group-like element. In particular, when $m=1$ and $\mu=0$, it coincides with the generalized Taft algebra introduced in \cite{HCZ04}.
\end{itemize}
\end{remark}

Let $G = \langle g \rangle \times \langle x \rangle$ denote the direct product of cyclic groups of orders $d$ and $m$, generated by $g$ and $x$ respectively. Note that when $n=1$, the algebra $A(d,m,\gamma,0)$ coincides with the group algebra $\k G$. Therefore, we focus on the case $n \geq 2$. In this setting, $A(d,m,\gamma,0)$ can be realized as a bosonization over $\k G$. Specifically, let $V_0 = \k\{y\}$ denote the one-dimensional Yetter-Drinfeld module over $\k G$, with the action and coaction given by:
\begin{itemize}
	\item The action  is defined by
	\begin{equation*}
		g\cdot y=\gamma y \ ,\  x \cdot y= y ;
	\end{equation*}
	
	\item The coaction is defined by
	\begin{equation*}
		\delta(y)=g \otimes y  .
	\end{equation*}	
\end{itemize}

Note that $\B(V_0)=\k[y]/(y^n)$ is the truncated polynomial algebra. We thus obtain the following isomorphism.

\begin{lemma}\label{lem:bosination}
Let $n \geq 2$.  There exists an isomorphism of Hopf algebras
\[
\psi: \B(V_0) \biproduct \k G \xrightarrow{\cong}  A(d,m,\gamma,0),
\]
determined on the generators by $\psi(y \biproduct 1) = y g^{-1}$, $\psi(1 \biproduct g) = g^{-1}$, and $\psi(1 \biproduct x) = x$.
Explicitly, the map on the basis elements is given by:
\begin{equation}
	\psi(y^i \biproduct g^j x^k) = \gamma^{\frac{i(i-1)}{2}} \, y^i g^{-(i+j)} x^k,
\end{equation}
for all $0 \leq i < n$, $0 \leq j < d$, and $0 \leq k < m$.
\end{lemma}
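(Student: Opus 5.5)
The plan is to define $\psi$ as an algebra map from a presentation of the bosonization, check that it is a coalgebra map on generators, and then produce an explicit inverse. Compatibility with antipodes then comes for free, since a bialgebra map between Hopf algebras automatically commutes with the antipodes.

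\textbf{Step 1: a presentation of the source.} By \eqref{eq:bosonization-mult}, the algebra $\B(V_0)\biproduct \k G$ is generated by $Y:=y\biproduct 1$, $\bar g:=1\biproduct g$ and $\bar x:=1\biproduct x$. These satisfy
\[
\bar x\bar g=\bar g\bar x,\quad \bar xY=Y\bar x,\quad \bar gY=\gamma Y\bar g,\quad \bar g^{d}=\bar x^{m}=1,\quad Y^n=0.
\]
Here $\bar g Y=(g\cdot y)\biproduct g=\gamma\, y\biproduct g=\gamma Y\bar g$, while $\bar x$ commutes with $Y$ because $x$ acts trivially, and $Y^n=0$ because $\B(V_0)=\k[y]/(y^n)$. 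Since $\B(V_0)\biproduct\k G=\B(V_0)\otimes \k G$ as a vector space, it has basis $\{y^i\biproduct g^jx^k\}$. Every word in $Y,\bar g,\bar x$ reduces via these relations to a scalar multiple of some $Y^i\bar g^j\bar x^k=y^i\biproduct g^jx^k$. Hence these relations present the bosonization: the abstract algebra they define is spanned by $ndm$ monomials and surjects onto a space of dimension $ndm$.

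\textbf{Step 2: $\psi$ is a well-defined bialgebra map.} Send $Y\mapsto yg^{-1}$, $\bar g\mapsto g^{-1}$ and $\bar x\mapsto x$, and check the relations in $A(d,m,\gamma,0)$. From $yg=\gamma gy$ we get $g^{-1}y=\gamma yg^{-1}$. This gives $g^{-1}(yg^{-1})=\gamma (yg^{-1})g^{-1}$, and by induction
\[
(yg^{-1})^i=\gamma^{\frac{i(i-1)}{2}}y^ig^{-i}.
\]
In particular $(yg^{-1})^n$ is a scalar multiple of $y^n g^{-n}=\mu(1-g^n)g^{-n}=0$, because $\mu=0$. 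The remaining relations are immediate.

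For the coalgebra structure, \eqref{eq:bosonization-comult} together with $\delta(y)=g\otimes y$ gives
\[
\Delta(Y)=Y\otimes 1+\bar g\otimes Y.
\]
On the other side,
\[
\Delta(yg^{-1})=(y\otimes g+1\otimes y)(g^{-1}\otimes g^{-1})=yg^{-1}\otimes 1+g^{-1}\otimes yg^{-1}.
\]
The two expressions correspond under $\psi$. The group-likes $\bar g,\bar x$ map to group-likes, and the counits agree. Since both sides are algebra maps, agreement on generators suffices.

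\textbf{Step 3: bijectivity and the basis formula.} To avoid needing an a priori PBW basis of $A(d,m,\gamma,0)$, I will define an inverse algebra map $\phi$ by
\[
x\mapsto \bar x,\qquad g\mapsto \bar g^{-1},\qquad y\mapsto Y\bar g^{-1}.
\]
The relations of $A(d,m,\gamma,0)$ must be checked. From $\bar g Y=\gamma Y\bar g$ we get $Y\bar g^{-1}\bar g^{-1}=\gamma\,\bar g^{-1}Y\bar g^{-1}$, which is exactly $yg=\gamma gy$. Also $(Y\bar g^{-1})^n$ is proportional to $Y^n\bar g^{-n}=0=\mu(1-\bar g^{-n})$, and $\bar g^{-d}=\bar x^m=1$.

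Then $\phi\psi$ and $\psi\phi$ are algebra maps fixing generators, e.g. $\psi\phi(y)=yg^{-1}g=y$, so they are identities. Finally, the explicit formula
\[
\psi(y^i\biproduct g^jx^k)=\psi(Y)^i\psi(\bar g)^j\psi(\bar x)^k=\gamma^{\frac{i(i-1)}{2}}y^ig^{-(i+j)}x^k
\]
follows from the power computation in Step 2.

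The main point requiring care is Step 1, namely justifying that the listed relations present the bosonization, together with the sign and $q$-commutation bookkeeping in the inverse map. The rest is routine.
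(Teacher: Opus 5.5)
Your proof is correct. The key computations all check out: the identity $(yg^{-1})^i=\gamma^{i(i-1)/2}y^ig^{-i}$; the match between $\Delta(y\biproduct 1)=Y\otimes 1+\bar g\otimes Y$ and $\Delta(yg^{-1})=yg^{-1}\otimes 1+g^{-1}\otimes yg^{-1}$; and the check that $\phi(y)=Y\bar g^{-1}$ satisfies $yg=\gamma gy$. The dimension count in Step 1 also correctly justifies the presentation of $\B(V_0)\biproduct\k G$.

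The paper gives no argument for this lemma beyond observing $\B(V_0)=\k[y]/(y^n)$, so there is no detailed proof to compare with; your direct verification supplies what is left implicit there.

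\emph{Comparison with Radford's theorem.} The conceptual route suggested by the paper's preliminaries is Radford's projection theorem. The map $\pi\colon A(d,m,\gamma,0)\to\k G$, $y\mapsto 0$, splits the inclusion of $\k G$. The element $yg^{-1}$ is $\pi$-coinvariant, with coaction $g^{-1}\otimes yg^{-1}$ and adjoint action $g\cdot yg^{-1}=\gamma^{-1}yg^{-1}$. This explains conceptually why the isomorphism must send $1\biproduct g$ to $g^{-1}$. That route, however, still requires showing that the coinvariants are exactly $\k[yg^{-1}]/((yg^{-1})^n)$, in particular that $y^{n-1}\neq 0$. This amounts to knowing a basis of $A(d,m,\gamma,0)$, which your explicit inverse $\phi$ avoids entirely.

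\emph{A minor economy.} Your Step 1 could be dropped. Define only $\phi$, using the given presentation of $A(d,m,\gamma,0)$. That algebra is obviously spanned by the $ndm$ monomials $y^ig^jx^k$. Moreover, $\phi$ is onto the $ndm$-dimensional space $\B(V_0)\biproduct\k G$, since $Y=\phi(yg^{-1})$ and $\bar g=\phi(g^{-1})$. Hence $\phi$ is bijective. Checking that $\phi$ respects comultiplication on the generators $x,g,y$ then makes $\psi=\phi^{-1}$ a Hopf isomorphism.
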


The algebra $A(d, m, \gamma, 1)$ is a 2-cocycle deformation of $A(d, m, \gamma, 0)$. Following \cite[Section 4.4]{GM10}, we can explicitly describe the 2-cocycle giving rise to this deformation. Define a bilinear form $\eta: A(d, m, \gamma, 0) \otimes A(d, m, \gamma, 0) \to \k$ on the basis elements $x^i g^j y^k$ by:
\begin{equation*}
	\eta(x^i g^j y^k, x^r g^t y^s) = 
	\begin{cases} 
		-\gamma^{tk} & \text{if } k+s = n, \\
		0 & \text{otherwise},
	\end{cases}
\end{equation*}
where the indices satisfy $0\leq i,r <m$, $0\leq j,t <d$, and $0\leq k,s <n$. Then $\eta$ is a Hochschild 2-cocycle on $A(d, m, \gamma, 0)$. 

Let $\sigma$ be the exponential $\sigma = e^\eta$, with inverse $\sigma^{-1} = e^{-\eta}$. Observe that since $\eta * \eta = 0$, the series truncates, yielding:
\[
\sigma = e^\eta = \varepsilon \otimes \varepsilon + \eta \quad \text{and} \quad \sigma^{-1} = e^{-\eta} = \varepsilon \otimes \varepsilon - \eta.
\]

\begin{lemma}[\cite{GM10}, Proposition 4.2]\label{lem:2cocycle}
Let $n \geq 2$. The map $\sigma = \varepsilon \otimes \varepsilon + \eta$ is a unitary 2-cocycle on $A(d, m, \gamma, 0)$. Moreover, the twisted algebra $(A(d, m, \gamma, 0))_{\sigma}$ coincides with $A(d, m, \gamma, 1)$.    
\end{lemma}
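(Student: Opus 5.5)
The statement is Lemma~\ref{lem:2cocycle}, which is a citation of \cite[Proposition 4.2]{GM10}. The plan is to verify the claims directly, organizing everything by the $\mathbb{N}_0$-grading of $A_0:=A(d,m,\gamma,0)$ given by $y$-degree. In $A_0$ we have $y^n=0$, so this is a Hopf algebra grading with $\deg x=\deg g=0$ and $\deg y=1$. By definition, $\eta(a,b)\neq 0$ only if $\deg a+\deg b=n$. Both the unitarity $\sigma(a,1)=\sigma(1,a)=\varepsilon(a)$ and $\eta(1,a)=\eta(a,1)=0$ are then immediate, since $\deg 1 = 0$ and every basis element has degree $<n$.

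\textbf{Invertibility.} We show $\eta*\eta=0$. The term $\eta(a_{(1)},b_{(1)})\eta(a_{(2)},b_{(2)})$ needs the two degree sums $\deg a_{(1)}+\deg b_{(1)}$ and $\deg a_{(2)}+\deg b_{(2)}$ to be $n$ each. Hence it needs $\deg a+\deg b=2n$, which is impossible since $\deg a+\deg b\le 2n-2$. Consequently $(\varepsilon\otimes\varepsilon+\eta)*(\varepsilon\otimes\varepsilon-\eta)=\varepsilon\otimes\varepsilon$.

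\textbf{Cocycle identity.} Expand both sides of the cocycle identity for $\sigma=\varepsilon\otimes\varepsilon+\eta$.
\begin{itemize}
\item The terms of order $0$ agree.
\item The terms linear in $\eta$ give exactly the Hochschild $2$-cocycle condition (with trivial bimodule structure):
\[
\eta(a,b)\varepsilon(c)+\eta(ab,c)=\varepsilon(a)\eta(b,c)+\eta(a,bc).
\]
\item The quadratic terms must satisfy
\[
\eta(a_{(1)},b_{(1)})\eta(a_{(2)}b_{(2)},c)=\eta(b_{(1)},c_{(1)})\eta(a,b_{(2)}c_{(2)}).
\]
\end{itemize}
I would check both identities on basis triples $a=x^ig^jy^k$, $b=x^rg^ty^s$, $c=x^pg^qy^l$. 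The Hochschild condition reduces to comparing $\gamma$-powers; the factor $\gamma^{tk}$ in $\eta$ is designed so that moving $g^t$ past $y^k$ matches. Both sides vanish unless the degrees sum to $n$, and when $k+s+l=n$ they match after using $yg=\gamma gy$ and $y^{k+s}=0$ for $k+s\ge n$.

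For the quadratic identity, the total degree must be $k+s+l=2n$. Each side then forces a specific splitting of the coproducts, and one computes with the $q$-binomial formula
\[
\Delta(y^k)=\sum_{u}\binom{k}{u}_{\gamma}y^u\otimes g^u y^{k-u}\quad(\text{up to the ordering convention}),
\]
comparing the resulting $\gamma$-binomial coefficients. I expect this bookkeeping of $\gamma$-powers and $q$-binomials in the quadratic terms to be the main obstacle. I would first try to reduce it by noting that both sides are determined by their values when $a,b,c$ are pure powers of $y$ times group-likes, and that each side collapses to a single surviving term, using $\binom{n}{u}_\gamma=0$ for $0<u<n$.

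\textbf{Twisted multiplication.} Compute $a\cdot_\sigma b=\sigma(a_{(1)},b_{(1)})a_{(2)}b_{(2)}\sigma^{-1}(a_{(3)},b_{(3)})$ on generators.
\begin{itemize}
\item For group-likes, $\eta$ vanishes in degree $0$, so the products among $x$ and $g$ are unchanged; thus $g^d=x^m=1$ and $xg=gx$ still hold.
\item For $x\cdot_\sigma y$ and $g\cdot_\sigma y$, the degree sum is $1<n$ when $n\ge2$, so these products are unchanged. Hence $xy=yx$ and $yg=\gamma gy$ persist.
\item The new relation comes from $y^{\cdot_\sigma n}$. Inductively, $y^{\cdot_\sigma k}=y^k$ for $k<n$, again by degree. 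In $y\cdot_\sigma y^{n-1}$, the terms where the $\eta$-factors pair $y$ with $y^{n-1}$ contribute $-\eta(y,y^{n-1})\,g^n$-type and $\eta$-type scalar terms. Using $\eta(y,y^{n-1})=-1$ with the $g$-twist from $\Delta(y^{n-1})$, this yields $y^{\cdot_\sigma n}=1-g^n$, possibly after the normalization $y\mapsto$ scalar multiple (harmless since $\gamma\in R_n$ fixes the constant).
\end{itemize}

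\textbf{Conclusion.} We obtain a surjective algebra map $A(d,m,\gamma,1)\to(A_0)_\sigma$, and it is also a coalgebra map since the coalgebra structure of $(A_0)_\sigma$ is that of $A_0$. The algebra $A(d,m,\gamma,1)$ is spanned by $x^ig^jy^k$ with $0\le i<m$, $0\le j<d$, $0\le k<n$, so its dimension is at most $dmn=\dim A_0$. The map is therefore an isomorphism of Hopf algebras.
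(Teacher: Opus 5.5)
The paper does not prove this lemma; it simply quotes \cite[Proposition 4.2]{GM10}. Your direct verification is therefore a self-contained alternative, and most of its bookkeeping is right:
\begin{itemize}
\item unitarity and $\eta*\eta=0$ by $y$-degree;
\item the splitting of the cocycle identity into a degree-$n$ (Hochschild) part and a degree-$2n$ (quadratic) part;
\item the invariance of all twisted products of total $y$-degree $<n$;
\item the final dimension count.
\end{itemize}

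\textbf{The gap.} The gap is the quadratic identity, which is the only nontrivial content of ``$\sigma$ is a $2$-cocycle''. You do not verify it, and the shortcut you propose fails: neither side collapses to a single surviving term, and the coproducts are not forced into one splitting. Take $a=x^{i_1}g^{j_1}y^k$, $b=x^{i_2}g^{j_2}y^s$, $c=x^{i_3}g^{j_3}y^l$ with $k+s+l=2n$, so $2\le k,s,l\le n-1$. The left side runs over all $u+v=n$ with $u\le k$, $v\le s$, which is $n-l+1\ge 2$ terms. Already for $n=3$, $k=s=l=2$ (all $j$'s zero) it is $(1+\gamma)\gamma^2+(1+\gamma)$, which equals $1$ only after summing.

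\textbf{How to close it.} Use $\Delta(y^k)=\sum_u\binom{k}{u}_\gamma y^u\otimes g^uy^{k-u}$ and $y^ag^b=\gamma^{ab}g^by^a$. The two minus signs cancel, and one gets
\begin{align*}
\eta(a_{(1)},b_{(1)})\,\eta(a_{(2)}b_{(2)},c)&=\gamma^{j_2k-j_3l}\sum_{u+v=n}\binom{k}{u}_{\gamma}\binom{s}{v}_{\gamma}\gamma^{(k-u)v},\\
\eta(b_{(1)},c_{(1)})\,\eta(a,b_{(2)}c_{(2)})&=\gamma^{j_2k+j_3(k+s)}\sum_{v+w=n}\binom{s}{v}_{\gamma}\binom{l}{w}_{\gamma}\gamma^{(s-v)w}.
\end{align*}
By $q$-Vandermonde (compare coefficients of $A^nB^{k+s-n}$ in $(A+B)^k(A+B)^s=(A+B)^{k+s}$ in the quantum plane $BA=\gamma AB$), the two sums are $\binom{k+s}{n}_\gamma$ and $\binom{s+l}{n}_\gamma$. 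Since $n<k+s$ and $s+l<2n$, write $(A+B)^{k+s}=(A^n+B^n)(A+B)^{k+s-n}$; this is where $\binom{n}{u}_\gamma=0$ for $0<u<n$ is actually used, and it shows both binomials equal $1$. Since $k+s=2n-l$, both sides then equal $\gamma^{j_2k-j_3l}$.

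\textbf{The twisted relation.} Drop the hedge about rescaling $y$. In $y\cdot_\sigma y^{n-1}$:
\begin{itemize}
\item the $\eta$ from $\sigma$ pairs $y\otimes g$ with $y^{n-1}\otimes g^{n-1}$ and gives $\eta(y,y^{n-1})g^n=-g^n$;
\item the $-\eta$ from $\sigma^{-1}$ pairs $1\otimes y$ with $1\otimes y^{n-1}$ and gives $+1$.
\end{itemize}
So $y^{\cdot_\sigma n}=1-g^n$ exactly. This matters because the lemma asserts equality with $A(d,m,\gamma,1)$, not just isomorphism. Your reason for a rescaling being harmless is also wrong: $y\mapsto\lambda y$ multiplies $\mu$ by $\lambda^n$, which has nothing to do with $\gamma\in R_n$.
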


For the remainder of this paper, we fix positive integers $n,w$,  and let $\gamma$ be a primitive $n$-th root of unity. Let $H = B(n, w, \gamma)$ and denote its group of group-like elements by $G = G(H)$.

\section{Comodule Structures over Simple Yetter-Drinfeld Modules}\label{section3}

In this section, we establish that every simple Yetter-Drinfeld module over $H$ is finite-dimensional. We then show that every such module admits a standard basis and compute its comatrix relative to this basis. The analysis of these comatrices is crucial for our classification.

\subsection{Existence of Standard Bases for Simple Yetter-Drinfeld Modules }\label{subsection31}

We first prove that every simple Yetter-Drinfeld module over $H$ is finite-dimensional.

\begin{proposition}\label{prop:simplemustfinite}
Every simple Yetter-Drinfeld module $V$ over $H$ is finite-dimensional.
\end{proposition}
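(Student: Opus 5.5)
Pick any nonzero $v \in V$. The plan is to show that the Yetter-Drinfeld submodule generated by $v$ is finite-dimensional. Simplicity then forces it to equal $V$.

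First, by the fundamental theorem of comodules, $v$ lies in a finite-dimensional subcomodule $C_v \subseteq V$. Concretely, $\delta(v)=\sum_{k} h_k\otimes v_k$ with finitely many terms, and the span of the $v_k$ is a finite-dimensional subcomodule containing $v$. Let $M = H\cdot C_v$ be the $H$-submodule generated by $C_v$. The compatibility condition \eqref{compatibilitycondition} gives $\delta(h\cdot u)=h_{(1)}u_{(-1)}S(h_{(3)})\otimes h_{(2)}\cdot u_{(0)}\in H\otimes M$ for $u\in C_v$. Hence $M$ is a Yetter-Drinfeld submodule, and by simplicity $V = H\cdot C_v$. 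So it suffices to show that $H\cdot C_v$ is finite-dimensional.

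The key point is that $H$ is a finite module over a central subalgebra that acts on $V$ in a controlled way. The elements $x^{\pm1}$ and $g^n$ are central in $H$; indeed $yg^n=\gamma^n g^n y=g^ny$. Also $H$ is spanned over $\k[x^{\pm1}]$ by the finitely many elements $y^ig^j$ with $0\le i,j<n$. Therefore $H\cdot C_v=\sum_{i,j}y^ig^j\,\k[x^{\pm1}]\cdot C_v$, and it is enough to show that $\k[x^{\pm1}]\cdot C_v$ is finite-dimensional.

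I first try to show that $x$ acts on $V$ by a scalar, or at least that $x\cdot C_v$ lies in a fixed finite-dimensional space. Since $x$ is central and grouplike, compatibility gives $\delta(x\cdot u)=x u_{(-1)}x^{-1}\otimes x\cdot u_{(0)}=u_{(-1)}\otimes x\cdot u_{(0)}$. So the action of $x$ is an $H$-linear comodule endomorphism of $V$, that is, an endomorphism in ${}^H_H\mathcal{YD}$. The action of $x^{-1}$ is likewise its inverse. Now use a Schur-type argument. Let $\phi$ denote the action of $x$. Then $\phi(C_v)$ is again a finite-dimensional subcomodule, because $\phi$ is colinear. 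Consider the finite-dimensional subcomodule $W=C_v+\phi(C_v)+\phi^2(C_v)+\cdots$. To see this sum is finite, I use that the coefficient space of $C_v$ is finite-dimensional. Every element of $\phi^k(C_v)$ has its coaction coefficients in the fixed finite-dimensional subcoalgebra $D=\operatorname{span}\{c_{kl}\}$ attached to $C_v$. Hence all these subcomodules lie in $V_D=\{u\in V: \delta(u)\in D\otimes V\}$, the largest $D$-subcomodule.

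The main obstacle is showing that $V_D$ is finite-dimensional. If I can get that, then $\k[x^{\pm1}]\cdot C_v\subseteq V_D$ is finite-dimensional and we are done. To bound $V_D$, I use simplicity again. $V_D$ is stable under $\phi^{\pm1}$, so $V_D$ is a module over $\k[x^{\pm1}]$. Pick an eigenvector $u_0\in V_D$ of $\phi$, which is possible provided $V_D$ contains a nonzero finite-dimensional $\phi$-stable piece. Then $\ker(\phi-\lambda)$ is a nonzero Yetter-Drinfeld submodule, since $\phi$ is a morphism. By simplicity it is all of $V$, so $x$ acts by a scalar $\lambda$. To get the finite-dimensional $\phi$-stable piece, I would use a fallback for the case where $\phi$ has no eigenvector on $V_D$. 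In that case $V_D$ contains a free $\k[x^{\pm1}]$-submodule generated by some $u$. Then $(x-1)\cdot V$ is a proper nonzero Yetter-Drinfeld submodule, because $\phi-1$ is a morphism that is injective and not surjective on $V$. This contradicts simplicity. Once $x$ acts by a scalar, $H\cdot C_v=\sum_{i,j}y^ig^j\cdot C_v$ has dimension at most $n^2\dim C_v$, which proves the proposition.
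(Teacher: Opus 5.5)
Your overall strategy matches the paper's: the Yetter--Drinfeld submodule generated by a finite piece is all of $V$, $H$ is finite over the central subalgebra $R=\k[x^{\pm1}]$, and Schur's lemma is applied to the action of $x$. However, the decisive step is not justified.

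Everything hinges on excluding the case where $x-\lambda$ acts injectively on $V$ for every $\lambda\in\k$. There you assert that $(x-1)\cdot V\neq V$ because $V_D$ contains a free cyclic $R$-submodule. That implication is false. The field $\k(x)$, viewed as an $R$-module, contains the free submodule $R$, yet every $x-\lambda$ acts bijectively on it, so $(x-1)\k(x)=\k(x)$. Torsion-freeness, which is all that ``no eigenvector'' gives you, cannot rule out this divisible situation, and that situation is exactly the one you must exclude. The detour through $V_D$ does not help either: you never show $V_D$ is finite-dimensional, and it is not needed.

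The missing ingredient is one you already wrote down. Since $x$ is central, $V=H\cdot C_v=\sum_{i,j}y^ig^j\,R\cdot C_v$, so $V$ is a finitely generated $R$-module, generated by the elements $y^ig^j\cdot c_t$ with $\{c_t\}$ a basis of $C_v$. Suppose every $x-\lambda$ acts injectively. Then $V$ is a torsion-free finitely generated module over the PID $R$, hence free of some finite rank $k\geq 1$. It follows that $V/(x-1)V\cong\k^k\neq 0$, so $(x-1)\cdot V$ is a nonzero proper Yetter--Drinfeld submodule, contradicting simplicity. This is exactly the paper's argument: a nonzero finitely generated module over a PID cannot be divisible.

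With this repair your proof is essentially the paper's. The only difference is that you generate $V$ from a finite-dimensional subcomodule $C_v$, whereas the paper uses a single vector $v\in V_h$ in the socle, obtained from the pointedness of $H$.
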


\begin{proof}
Let $(V, \cdot, \delta)$ be a simple Yetter-Drinfeld module over $H$. 
	
First, we establish that $V$ is finitely generated as a module over the subalgebra $\k[x, x^{-1}]$. Define $\soc(V)$ to be the sum of all simple subcomodules of $V$. Since $H$ is pointed,
$$\soc (V)=\bigoplus\limits_{h\in G(H)} V_h,$$ 
where $ V_h=\left\{v\in V \mid \delta(v)=h\otimes v   \right\} $. Choose $h \in G(H)$ such that $V_h \neq 0$. 
Let $v$ be a non-zero vector in $V_h$. Consider the $H$-submodule $M = H \cdot v$ generated by $v$. By the compatibility condition \eqref{compatibilitycondition}, for any $a \in H$, the coaction is given by
\[ \delta(a \cdot v) = a_{(1)} v_{(-1)} S(a_{(3)}) \otimes a_{(2)} \cdot v_{(0)} = a_{(1)} h S(a_{(3)}) \otimes a_{(2)} \cdot v. \]
This shows that $M$ is a subcomodule of $V$, and thus $M$ is a Yetter-Drinfeld submodule of $V$. Since $V$ is simple as a Yetter-Drinfeld module, we must have $V = M = H \cdot v$. Consequently, $V$ is a cyclic $H$-module.
	
Next, observe the structure of $H$. The relations $y^n = 1 - x^w$ and $g^n = x^w$  indicate that $H$ is a finitely generated module over the central subalgebra $R = \k[x, x^{-1}]$.
Specifically, $H$ is spanned by the set $\{g^i y^j \mid 0 \leq i, j \le n-1\}$ over $R$. Since $V$ is a cyclic $H$-module, it is finitely generated as an $R$-module. In particular, $V$ is spanned over $R$ by $\{g^i y^j \cdot v \mid 0 \leq i, j \le n-1\}$.

Now consider the action of $x$ on $V$. Since $x$ is central in $H$ and group-like, the map $\phi_\lambda: V \to V$ defined by $u \mapsto (x - \lambda) \cdot u$ is an endomorphism of $V$ as a Yetter-Drinfeld module for any $\lambda \in \k$.
By Schur's Lemma for Yetter-Drinfeld modules, any endomorphism is either zero or invertible.
Assume that $x$ does not act as a scalar. Then for all $\lambda \in \k$, the map $x - \lambda$ is invertible. This would imply that $V$ is a divisible module over the PID $R = \k[x, x^{-1}]$ (noting that $\k$ is algebraically closed).
However, a non-zero finitely generated module over a PID cannot be divisible. 
This contradiction implies that there exists some $\a \in \k$ such that $\phi_\a$ is not invertible, and hence $\phi_\a = 0$ by Schur's Lemma.

Thus, $x \cdot u = \a u$ for all $u \in V$. It follows that $V$ is spanned over $\k$ by the set $\{g^i y^j \cdot v \mid 0 \leq i, j \leq n-1\}$. Consequently, $V$ is finite-dimensional over $\k$.
\end{proof}

Thus, it suffices to consider finite-dimensional simple Yetter-Drinfeld modules. To this end, we first introduce the following concepts.

\begin{definition}\label{def:standardelement}
Let $(V, \cdot, \delta)$ be a Yetter-Drinfeld module over $H$. A nonzero element $v \in V$ is called a \emph{standard element} if there exist a group-like element $h \in G(H)$ and scalars $\a, \b \in \k^*$ such that
\begin{equation*}
	x \cdot v = \a v, \quad g \cdot v = \b v, \quad \delta(v) = h \otimes v.
\end{equation*}
In this case, we say $v$ is a standard element of type $(\a, \b, h)$.
\end{definition}

Since $x^w = g^n$, if $v$ is a standard element of type $(\a, \b, h)$, we must have $\a^w = \b^n$. 
The following lemma ensures the existence of standard elements.

\begin{lemma}\label{lem:existenceofse}
Every finite-dimensional Yetter-Drinfeld module over $H$ contains a standard element.
\end{lemma}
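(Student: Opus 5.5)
Let $V\neq 0$ be a finite-dimensional Yetter-Drinfeld module over $H$. The plan is to find a nonzero subspace that is stable under $x$ and $g$, consists of coinvariant-type vectors for one fixed group-like element, and on which $x$ and $g$ commute. A common eigenvector of $x$ and $g$ in that subspace will then be a standard element.

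First, since $V$ is a finite-dimensional comodule over the pointed coalgebra $H$, its socle as a comodule is nonzero, and, as in the proof of Proposition \ref{prop:simplemustfinite}, $\soc(V)=\bigoplus_{h\in G(H)}V_h$. Hence there is some $h\in G(H)$ with $V_h\neq 0$.

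Next, we check that $V_h$ is stable under the actions of $x$ and $g$. Let $v\in V_h$ and let $a\in\{x,g\}$. Since $a$ is group-like, the compatibility condition \eqref{compatibilitycondition} gives
\[
\delta(a\cdot v)=a h a^{-1}\otimes a\cdot v .
\]
The group $G(H)$ is abelian: $x$ is central, and $G(H)$ is generated by $g$ and $x$. Therefore $aha^{-1}=h$, and so $a\cdot v\in V_h$. Thus $x$ and $g$ restrict to commuting operators on the nonzero finite-dimensional space $V_h$.

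Since $\k$ is algebraically closed, these two commuting operators have a common eigenvector $0\neq v\in V_h$. Write $x\cdot v=\a v$ and $g\cdot v=\b v$. Because $x$ and $g$ are invertible in $H$ (we have $g^{-1}=g^{n-1}x^{-w}$), the eigenvalues satisfy $\a,\b\in\k^*$. Hence $v$ is a standard element of type $(\a,\b,h)$.

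We do not expect a real obstacle here. The only point needing care is the stability of $V_h$ under $g$. This rests on $G(H)$ being abelian: in general the compatibility condition only sends $V_h$ to $V_{aha^{-1}}$, and here conjugation by $g$ fixes every group-like element.
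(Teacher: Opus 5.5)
Your proposal is correct and follows the paper's proof: pick $h\in G(H)$ with $V_h\neq 0$ via the comodule socle, use the compatibility condition to see that $V_h$ is stable under $x$ and $g$, then take a common eigenvector of these commuting operators. You also spell out two points the paper leaves implicit. Stability of $V_h$ uses that $G(H)$ is abelian, so that $aha^{-1}=h$. The eigenvalues are nonzero because $g^{-1}=g^{n-1}x^{-w}$.
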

\begin{proof}
Let $(V, \cdot, \delta)$ be a finite-dimensional Yetter-Drinfeld module over $H$. Define $\soc(V)$ to be the sum of all simple subcomodules of $V$. Since $H$ is pointed,
$$\soc (V)=\bigoplus\limits_{h\in G(H)} V_h,$$ 
where $ V_h=\left\{v\in V \mid \delta(v)=h\otimes v   \right\} $. Choose $h \in G(H)$ such that $V_h \neq 0$. Then $V_h$ is finite-dimensional because $V$ is. By the compatibility condition \eqref{compatibilitycondition}, the subspace $V_h$ is invariant under the action of both $x$ and $g$. Since $xg = gx$ and $\k$ is algebraically closed, the linear maps $x$ and $g$ have a common eigenvector $v$ in $V_h$. This $v$ is therefore a standard element of $V$.
\end{proof}

Let $V$ be a Yetter-Drinfeld module over $H$. For any standard element $v \in V$ and integer $k \geq 0$, define $V(v,k)$ to be the linear span of $\{v, y \cdot v, \dots, y^k \cdot v\}$. We have the following immediate result.

\begin{proposition}\label{pro:spaniscomodule}
Let $V$ be a  Yetter-Drinfeld module over $H$. Let $v \in V$ be a standard element of type $  (\a,\b,x^rg^i)  $, where $ \a,\b\in k^* $ and $r,i \in \Z$. Then for any $k \geq 0$:
\begin{itemize}
	\item [(1)] $x\cdot (y^k \cdot v)=\a(y^k \cdot v), \ g\cdot (y^k \cdot v) =\b\gamma^{-k} (y^k \cdot v) $;
	\item [(2)] $V(v,k)$ is a subcomodule of $V$;
	\item [(3)] If $y^{k+1} \cdot v \in V(v,k)$, then $V(v,k)$ is a Yetter-Drinfeld submodule of $V$.
\end{itemize}
\end{proposition}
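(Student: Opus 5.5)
Part (1) follows directly from the relations of $H$. Since $x$ is central, $x\cdot(y^k\cdot v)=y^k\cdot(x\cdot v)=\a\, y^k\cdot v$. From $yg=\gamma gy$ we get $gy=\gamma^{-1}yg$, and by induction $gy^k=\gamma^{-k}y^kg$. Hence $g\cdot(y^k\cdot v)=\gamma^{-k}y^k g\cdot v=\b\gamma^{-k}\,y^k\cdot v$.

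For part (2), I will compute $\delta(y^k\cdot v)$ explicitly. Set $h=x^rg^i$. Iterating $\Delta$ gives $\Delta^{(2)}(y)=y\otimes g\otimes g+1\otimes y\otimes g+1\otimes 1\otimes y$. The compatibility condition \eqref{compatibilitycondition} then yields
\[
\delta(y\cdot u)=y\,u_{(-1)}g^{-1}\otimes g\cdot u_{(0)}+u_{(-1)}g^{-1}\otimes y\cdot u_{(0)}-u_{(-1)}yg^{-1}\otimes u_{(0)}.
\]
I will argue by induction on $k$ that
\[
\delta(y^k\cdot v)\in \sum_{j=0}^{k} H\otimes \k\, y^j\cdot v.
\]
For $k=0$ this is just $\delta(v)=h\otimes v$. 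For the inductive step, write $\delta(y^{k}\cdot v)=\sum_{j\le k} c_{kj}\otimes y^j\cdot v$ and apply the displayed formula with $u=y^k\cdot v$. The three terms have second tensor factors $g\cdot y^j\cdot v$, $y^{j+1}\cdot v$ and $y^j\cdot v$, respectively. By part (1), $g\cdot y^j\cdot v$ is a scalar multiple of $y^j\cdot v$, so every second factor lies in $V(v,k+1)$. Hence $\delta(V(v,k))\subseteq H\otimes V(v,k)$ for every $k$, which is exactly the statement that $V(v,k)$ is a subcomodule. A cleaner alternative is to apply \eqref{compatibilitycondition} directly with $a=y^k$, using that $\Delta^{(2)}(y^k)$ is a sum of tensors $a_1\otimes a_2\otimes a_3$ with $a_2\in\operatorname{span}\{y^jg^l\}$, $j\le k$. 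Then $a_2\cdot v\in V(v,k)$ by part (1), and I expect to present the argument this way.

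For part (3), the hypothesis $y^{k+1}\cdot v\in V(v,k)$ gives by induction $y^{m}\cdot v\in V(v,k)$ for all $m\ge 0$. Indeed, $y\cdot V(v,k)\subseteq V(v,k)$, because $y\cdot y^j\cdot v=y^{j+1}\cdot v$ lies in $V(v,k)$ for $j<k$ trivially and for $j=k$ by the hypothesis. By part (1), $V(v,k)$ is stable under $x^{\pm1}$ and $g$. Since $x^{\pm1}, g, y$ generate $H$, the space $V(v,k)$ is an $H$-submodule. Combined with part (2), it is then a Yetter-Drinfeld submodule.

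The only mildly delicate step is the bookkeeping in part (2), namely checking that the middle tensor factors stay in the span of $y^jg^l$ with $j\le k$. This holds because $\Delta$ is multiplicative and $\Delta(y)$, $\Delta(g)$ involve only $y$ and $g$ in each factor.
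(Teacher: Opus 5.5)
Your proposal is correct and follows the paper's proof. Part (1) comes from the centrality of $x$ and $gy^k=\gamma^{-k}y^kg$. Part (2) is an induction on $k$ using the compatibility condition for the generator $y$; your three-term formula for $\delta(y\cdot u)$ is exactly the paper's computation. Part (3) follows from stability of $V(v,k)$ under $x^{\pm1},g,y$.

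Your alternative of applying \eqref{compatibilitycondition} directly with $a=y^k$ is also valid. The inductive form has the advantage of producing the explicit recursion \eqref{coefficientckl} for the coefficients, which the paper reuses in Definition \ref{def:ckl} and Remark \ref{rmk:ckl}(2).
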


\begin{proof}
(1) is straightforward.

For (2), we proceed by induction on $k$. The case $k=0$ holds trivially. Assume $V(v,k)$ is a subcomodule. Then $\delta(y^k \cdot v) = \sum_{l=0}^k c_{kl} \otimes (y^l \cdot v)$ for some $c_{kl} \in H$. By the compatibility condition \eqref{compatibilitycondition}, 	
\begin{eqnarray*}
	& &\delta (y^{k+1} \cdot v) \\
	&=&\delta (y\cdot (y^{k} \cdot v)) \\
	&=&\sum\limits_{l=0}^k \left(c_{kl}S(y)\otimes 1\cdot (y^l \cdot v)+ c_{kl}S(g)\otimes y\cdot (y^l \cdot v)+yc_{kl}S(g)\otimes g\cdot (y^l \cdot v) \right) \\
	&=&\sum\limits_{l=0}^{k+1} c_{k+1,l} \otimes y^l \cdot v,
\end{eqnarray*}
where
\begin{equation}\label{coefficientckl}
	c_{k+1,l}=\begin{cases}
		c_{k0}S(y)+\b yc_{k0}S(g),     & l=0, \\[5pt]
		c_{kl}S(y)+\b\gamma^{-l}yc_{kl}S(g) +    c_{k,l-1}S(g),     \qquad    &     0<l<k+1,  \\[5pt]
		c_{kk} S(g),           &     l=k+1.
	\end{cases}
\end{equation}
Thus $\delta(y^{k+1} \cdot v) \in H \otimes V(v,k+1)$. By the induction hypothesis, $V(v,k+1)$ is a subcomodule.

For (3), note that by (1), $V(v,k)$ is a $\k G$-submodule. If $y^{k+1} \cdot v \in V(v,k)$, then $V(v,k)$ is $y$-stable, hence an $H$-submodule. Combined with (2), it is a Yetter-Drinfeld submodule. 
\end{proof}

\begin{lemma}\label{lem:standbasis}
Let $V$ be a simple Yetter-Drinfeld module over $H$ with $\dim_{\k} V = p+1$ for some  $p \geq 0$. For any standard element $v \in V$, the set $\{v, y\cdot v, \dots, y^p \cdot v\}$ forms a basis of $V$. 
\end{lemma}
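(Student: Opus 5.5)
Let $v$ be a standard element of type $(\a,\b,x^rg^i)$. We use the subspaces $V(v,k)$ and Proposition \ref{pro:spaniscomodule}. Let $q\geq 0$ be the least integer such that $y^{q+1}\cdot v \in V(v,q)$. Such a $q$ exists because $V$ is finite-dimensional (Proposition \ref{prop:simplemustfinite}), so the chain $V(v,0)\subseteq V(v,1)\subseteq\cdots$ must stabilize.

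By the choice of $q$, the vectors $v, y\cdot v,\dots,y^q\cdot v$ are linearly independent. There is also a more intrinsic reason for this. By Proposition \ref{pro:spaniscomodule}(1), the vector $y^k\cdot v$ is an eigenvector of $g$ with eigenvalue $\b\gamma^{-k}$. Whenever $k\le n-1$, these eigenvalues are pairwise distinct, since $\gamma$ is a primitive $n$-th root of unity. Moreover, $y^n$ acts as the scalar $1-\a^w$ on $v$, because $x$ acts by $\a$ on every $y^k\cdot v$. Hence $q\le n-1$ automatically, and in any case linear independence up to $q$ follows from the minimality of $q$.

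By Proposition \ref{pro:spaniscomodule}(3), $V(v,q)$ is a Yetter-Drinfeld submodule of $V$. It is nonzero because it contains $v$. Simplicity of $V$ then forces $V(v,q)=V$. Comparing dimensions gives $q+1=\dim V=p+1$, so $q=p$, and $\{v,y\cdot v,\dots,y^p\cdot v\}$ is a basis.

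The only delicate point is being careful with the definition of $q$. The minimality must guarantee both linear independence of $v,\dots,y^q\cdot v$ and the inclusion $y^{q+1}\cdot v\in V(v,q)$. If $y^{k+1}\cdot v\notin V(v,k)$ for all $k<q$, then each new vector enlarges the span, which gives independence by induction. I expect no real obstacle beyond this bookkeeping.
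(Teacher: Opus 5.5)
Your proof is correct and is essentially the paper's argument. The paper takes $k$ maximal with $v, y\cdot v,\dots,y^k\cdot v$ linearly independent, which is equivalent to your minimal $q$. It then applies Proposition~\ref{pro:spaniscomodule}(3) to get a Yetter-Drinfeld submodule and concludes by simplicity and a dimension count, exactly as you do. Your eigenvalue and $y^n$ remarks are correct but not needed.
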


\begin{proof}
Let $k$ be the largest integer such that $\{v, y\cdot v, \dots, y^k \cdot v\}$ is linearly independent.  Then $y^{k+1} \cdot v \in V(v,k)$. By Proposition \ref{pro:spaniscomodule}(3), $V(v,k)$ is a Yetter-Drinfeld submodule. As $V$ is simple, $V = V(v,k)$. Therefore, the set $\{v, y \cdot v, \dots, y^k \cdot v\}$ is a basis of $V$, implying $\dim_{\k} V = k+1$. Since $\dim_{\k} V = p+1$, we obtain $k = p$, which concludes the proof.
\end{proof}

The basis presented in Lemma~\ref{lem:standbasis} will be referred to as a \emph{standard basis}; using it, we derive the results below.

\begin{lemma}\label{lem:Vdimension}
Let $V$ be a simple Yetter-Drinfeld module over $H$ with $\dim_{\k} V = p+1$ for some $p \geq 0$. Let $v \in V$ be a standard element of type $(\a,\b,h)$, where $\a, \b \in \k^*$ and $h \in G(H)$. Then:
\begin{itemize}
	\item[(1)] $p+1 \leq n$;
	\item[(2)] If $p+1 < n$, then $y^{p+1} \cdot v = 0$;
	\item[(3)] $y^{p+1} \cdot v = 0$ if and only if $\a^w = \b^n = 1$;
	\item[(4)] $y^{p+1} \cdot v \neq 0$ if and only if $\a^w = \b^n \neq 1$.
\end{itemize}
\end{lemma}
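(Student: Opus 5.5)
The plan is to exploit two facts already available: the action of $x$ and $g$ on the standard basis from Proposition~\ref{pro:spaniscomodule}(1), and the relation $y^n = 1 - x^w$ in $H$. By Lemma~\ref{lem:standbasis}, $\{v, y\cdot v, \dots, y^p\cdot v\}$ is a basis of $V$. By Proposition~\ref{pro:spaniscomodule}(1), each basis vector $y^k\cdot v$ is an eigenvector for $x$ with eigenvalue $\a$ and for $g$ with eigenvalue $\b\gamma^{-k}$. In particular $x$ acts on all of $V$ as the scalar $\a$, so
\[
y^n\cdot v=(1-x^w)\cdot v=(1-\a^w)\,v .
\]
Recall also that $\a^w=\b^n$ holds automatically for a standard element, since $x^w=g^n$.

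For (1), I will note that $y^{n}\cdot v=(1-\a^w)v\in V(v,n-1)$. Proposition~\ref{pro:spaniscomodule}(3) then shows that $V(v,n-1)$ is a nonzero Yetter-Drinfeld submodule. By simplicity $V=V(v,n-1)$, so $p+1=\dim_\k V\le n$.

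For (2), I will use the $g$-eigenvalues. Since $\gamma$ is a primitive $n$-th root of unity, the scalars $\b\gamma^{-k}$ for $0\le k\le n-1$ are pairwise distinct. Suppose $p+1<n$. The vector $y^{p+1}\cdot v$ lies in $V$, which is spanned by $y^k\cdot v$ with $0\le k\le p$, and these lie in $g$-eigenspaces for eigenvalues $\b\gamma^{-k}$. On the other hand, $y^{p+1}\cdot v$ is itself a $g$-eigenvector with eigenvalue $\b\gamma^{-(p+1)}$, and this eigenvalue differs from all of the $\b\gamma^{-k}$ with $k\le p$. Since eigenvectors with distinct eigenvalues are linearly independent, $y^{p+1}\cdot v$ must be $0$.

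For (3), first suppose $y^{p+1}\cdot v=0$. Since $p+1\le n$ by (1), we get $y^n\cdot v=0$, hence $(1-\a^w)v=0$, so $\a^w=\b^n=1$. Conversely, suppose $\a^w=1$, so that $y^n\cdot v=0$. If $p+1<n$, we are done by (2). If $p+1=n$, then $y^{p+1}\cdot v=y^n\cdot v=0$. Part (4) is just the negation of (3), using $\a^w=\b^n$.

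I do not expect a real obstacle. The only point needing care is that $x$ acts by the scalar $\a$ on all of $V$, not merely on $v$, which is what lets $y^n$ act on $v$ as the scalar $1-\a^w$. This follows directly from Proposition~\ref{pro:spaniscomodule}(1) together with the standard basis.
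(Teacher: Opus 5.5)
Your proposal is correct and follows the paper's proof essentially step by step. Parts (2)--(4) are argued identically. For (1), the paper just observes that $y^n\cdot v=(1-\b^n)v$ is proportional to $v$ and so cannot be a further basis vector; you instead rerun Proposition~\ref{pro:spaniscomodule}(3) together with simplicity, which is an equivalent route. (Incidentally, your ``point needing care'' is unnecessary: $y^n\cdot v=(1-x^w)\cdot v=(1-\a^w)v$ uses only $x\cdot v=\a v$.)
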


\begin{proof}
(1) By Lemma~\ref{lem:standbasis}, the set $\{v, y \cdot v, \dots, y^p \cdot v\}$ forms a basis of $V$. Moreover, we have
\[
y^n \cdot v = (1 - g^n) \cdot v = (1 - \b^n) v.
\]
This forces $p < n$, which proves statement (1).

(2) Assume that $y^{p+1} \cdot v \neq 0$. By Proposition~\ref{pro:spaniscomodule}(1), for each $k$ with $0 \leq k \leq p+1$, we have
\[
g \cdot (y^k \cdot v) = \b \gamma^{-k} (y^k \cdot v).
\]
Thus, the vectors $v, y \cdot v, \dots, y^{p+1} \cdot v$ are eigenvectors of $g$ corresponding to distinct eigenvalues $\b\gamma^{-k}$, and hence they are linearly independent. This contradicts the fact that $\{v, y \cdot v, \dots, y^p \cdot v\}$ is a basis of $V$. Therefore, we must have $y^{p+1} \cdot v = 0$.

Since (3) and (4) are equivalent, it suffices to prove (3). First, assume that $y^{p+1} \cdot v = 0$. Since $p+1 \leq n$ (by part (1)), it follows that
\[
0 = y^{n-(p+1)} \cdot (y^{p+1} \cdot v) = y^n \cdot v = (1 - \b^n)v.
\]
This implies $\b^n = 1$. Since $\a^w = \b^n$, it follows that $\a^w = 1$.

Conversely, assume that $\a^w = \b^n = 1$. By (1), we know $p+1 \leq n$.
If $p+1 < n$, then by (2) we have $y^{p+1} \cdot v = 0$.
If $p+1 = n$, then
\[
y^{p+1} \cdot v = y^n \cdot v = (1 - g^n) \cdot v = (1 - \b^n)v = 0.
\]
In both cases, $y^{p+1} \cdot v = 0$.
\end{proof}

The action and coaction of a simple Yetter-Drinfeld module admit a simple form with respect to a standard basis.
Let  $ V $  be a simple Yetter-Drinfeld module over $H$ of dimension  $ p+1 $  for some  $ p \geq 0 $. Fix a standard element  $ v \in V $ of type $  (\a,\b,x^rg^i)  $, where $ \a,\b\in k^* $ and $r,i \in \Z$. Then the set $\{v, y\cdot v, \dots, y^p \cdot v\}$ forms a standard basis of $  V  $. Denoting  $ v_k = y^k \cdot v $  for  $ 0 \leq k \leq p $, the action and coaction on this basis are given by:

\begin{itemize}
	\item The action of  $ x $  is given for $  0 \leq k \leq p $ by
	\begin{equation*}
		x\cdot v_k=\a v_k.
	\end{equation*}
	\item The action of  $ g $  is given for $  0 \leq k \leq p $ by
	\begin{equation*}
		g\cdot v_k=\b\gamma^{-k} v_k.
	\end{equation*}
	\item The action of  $ y $  is given for $  0 \leq k \leq p $ by
	\begin{equation}\label{standbasisactioncoaction}
		y\cdot v_k=
		\begin{cases}
			v_{k+1} , & 0\leq k<p  ,\\[5pt]
			\sum_{l=0}^{p} a_l v_l    ,   & k=p  ,
		\end{cases}
	\end{equation}
    where $ a_l \in \k $ for $ 0\leq l \leq p $.
	\item The coaction  is given for $  0 \leq k \leq p $ by
	\begin{equation*}
		\delta(v_k) =\sum\limits_{l=0}^{k} c_{k,l} \otimes v_l,
	\end{equation*}
    where the coefficients  $ c_{k,l} $  are defined recursively by Equation~\eqref{coefficientckl}, with initial condition  $ c_{0,0} = x^r g^i $.
\end{itemize}

\subsection{The Comatrix Relative to a Standard Basis}

The preceding subsection concluded with a description of the action and coaction on a finite-dimensional simple Yetter-Drinfeld module relative to a standard basis. We now examine the converse problem: given arbitrary parameters $ \a,\b \in \k^* $, $ r,i\in \Z $, $ p\geq 0 $, and scalars $ a_l \in \k $ ($ 0\leq l \leq p $), let $ V $ be a vector space of dimension $ p+1 $ with basis $\{v_0, v_1, \dots, v_p\}$, endowed with the action and coaction defined in \eqref{standbasisactioncoaction}.  A natural question is whether $ V $ thereby becomes a Yetter-Drinfeld module over $ H $.

Generally, the answer is negative. For example, when $p+1 < n$, it follows that $a_l = 0$ for all $l$. Moreover, the parameters $p$ and $a_l$ must be selected to also fulfill the compatibility conditions \eqref{compatibilitycondition}.

Although $ V $ may not be a full Yetter-Drinfeld module, we will prove that it always admits an $ H $-comodule structure. Our strategy is to compute the coefficient matrix of the coaction explicitly and verify that it is a comatrix. The entries of this matrix are defined recursively by Equation \eqref{coefficientckl}, as we now formalize.

\begin{definition}\label{def:ckl}
Let $r,i \in \mathbb{Z}$ and $ \b\in \k^* $, define coefficients $c_\b^{r,i}(k,l) \in H$ recursively as follows: set $c_\b^{r,i}(0,0) := x^r g^i$ and for $k \geq 0$, $0 \leq l \leq k+1$:
\begin{equation*}
	c_\b^{r,i}(k+1,l):=\begin{cases}
		c_\b^{r,i}(k,0)S(y)+\b yc_\b^{r,i}(k,0)S(g),     & l=0 ,\\[5pt]
		c_\b^{r,i}(k,l)S(y)+\b \gamma^{-l}y c_\b^{r,i}(k,l)S(g) +    c_\b^{r,i}(k,l-1)S(g),     \qquad    &     0<l<k+1,  \\[5pt]
		c_\b^{r,i}(k,k) S(g)  ,         &     l=k+1.
	\end{cases}
\end{equation*} 
For $p \geq 0$, define the $(p+1) \times (p+1)$ lower triangular matrix $\mathcal{A}_\b^{r,i}(p)$ with entries:
\[
\big( \mathcal{A}_\b^{r,i}(p) \big)_{k,l} = 
\begin{cases} 
	c_\b^{r,i}(k,l) , & 0 \leq l \leq k \leq p , \\
	0 , & \text{otherwise}.
\end{cases}
\]
Explicitly:
\[
\mathcal{A}_\b^{r,i}(p) = 
\begin{pmatrix}
	c_\b^{r,i}(0,0) & 0 & \cdots & 0 \\
	c_\b^{r,i}(1,0) & c_\b^{r,i}(1,1) & \cdots & 0 \\
	\vdots & \vdots & \ddots & \vdots \\
	c_\b^{r,i}(p,0) & c_\b^{r,i}(p,1) & \cdots & c_\b^{r,i}(p,p)
\end{pmatrix}.
\]
\end{definition}
The following properties are immediate from the definitions.

\begin{remark}\label{rmk:ckl}
\normalfont 
Let $ r,i,t,s \in \Z $ and $ \a,\b \in \k^* $.
\begin{itemize}
\item[(1)]  If  $ x^r g^i=x^t g^s  $, then $ \A_{\b}^{r,i}(p) =\A_{\b}^{t,s}(p)  $ for all $ p\geq 0 $. In other words, the coefficient $c_\b^{r,i}(k,l)$ depends only on the group-like element $x^r g^i$ and the scalar $\b$.

\item[(2)] For any Yetter-Drinfeld module $V$ over $H$ and $v \in V$ is a standard element of type $ (\a,\b,x^rg^i) $, then 
\begin{equation}\label{deltax^kv}
	\delta(y^k\cdot v)=\sum\limits_{l=0}^k c_\b^{r,i}(k,l) \otimes y^l \cdot v 
\end{equation}
holds for all $k \geq 0$.
\end{itemize}
\end{remark}

To express the entries $c_\b^{r,i}(k,l)$ explicitly, we define the following coefficient families over $\k$. Note that they depend only on the parameters $i$ and $\b$.

\begin{definition}
Let $i \in \mathbb{Z}$ and $ \b \in \k^* $, define the following coefficient families in $\k$:
	\begin{itemize}
		\item[(1)] For $k \geq 0$ and $0 \leq l \leq k$:
		 \begin{equation*}
		  R_\b^i(k,l):=\begin{cases}
			\b\gamma^{-l}-\gamma^{k-1-i},  \qquad  & 0\leq l <k, \\[5pt]
			1 ,          &     l=k .
		             \end{cases}
	     \end{equation*} 
		
		\item[(2)] Set $\lambda_\b^i(0,0) := 1$, and for $k \geq 1$, $0 \leq l \leq k$:
        \begin{equation*}
        	\lambda_\b^i(k,l):=\begin{cases}
        		R_\b^i(k,0)\lambda_\b^i(k-1,0) , & l=0, \\[5pt]
        		R_\b^i(k,l)\lambda_\b^i(k-1,l)+\lambda_\b^i(k-1,l-1) , \qquad  & 0< l <k , \\[5pt]
        		1  ,         &     l=k .
        	                  \end{cases}
        \end{equation*} 
	\end{itemize}
\end{definition}

\begin{remark}\label{rmk:lambdak0}\hfil
\normalfont
\begin{itemize}
	\item [(1)] For all $k \geq 1$, we have $\lambda_\b^i(k,0) = \prod_{l=1}^{k} (\b-\gamma^{l-1-i})$. In particular, $\lambda_\b^i(n,0) = \b^n - 1$.
	\item [(2)] If $\b=\gamma^j$ for some $j\in \Z$, then $\lambda_\b^{i}(k,l)$ (respectively, $R_\b^i(k,l)$) coincides with $\lambda_j^{i}(k,l)$ (respectively, $R_j^i(k,l)$) defined for the infinite-dimensional Taft algebra $H(n,1,\gamma)$ (see \cite[Definition 3.8]{ZLY25}).
\end{itemize}
\end{remark}

Next, we establish the relationship between  $c_\b^{r,i}(k,l) $  and $\lambda_\b^i(k,l) $.

\begin{proposition}\label{pro:cijformula}
Let  $r,i \in \mathbb{Z}$ and $\b \in k^*$. Then
\[
c_\b^{r,i}(k,l) = \lambda_\b^i(k,l)y^{k-l}x^rg^{i-k}
\]
for all $0 \leq l \leq k$.
\end{proposition}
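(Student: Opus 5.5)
Induction on $k$, verifying the formula against the recursion of Definition~\ref{def:ckl} one boundary case at a time. Two observations do all the work. First, $x$ is central and group-like, so $x^r$ can be carried along and it suffices to treat $r=0$. Second, the relation $yg=\gamma gy$ gives $g^{a}y=\gamma^{-a}yg^{a}$, which lets us move every $y$ to the left of the powers of $g$.

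\textbf{Base case.} For $k=0$ we have $c_\b^{r,i}(0,0)=x^rg^i=\lambda_\b^i(0,0)y^0x^rg^{i}$.

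\textbf{Inductive step.} Assume the formula holds for $k$, and substitute $c_\b^{r,i}(k,l)=\lambda_\b^i(k,l)\,y^{k-l}x^rg^{i-k}$ into the recursion. Recall $S(g)=g^{-1}$ and $S(y)=-yg^{-1}$. There are three types of term.
\begin{itemize}
\item The last type of term: $c(k,l-1)S(g)=\lambda_\b^i(k,l-1)\,y^{k+1-l}x^rg^{i-k-1}$, which already has the target shape.
\item The middle type of term: $\b\gamma^{-l}\,y\,c(k,l)S(g)=\b\gamma^{-l}\lambda_\b^i(k,l)\,y^{k+1-l}x^rg^{i-k-1}$.
\item The first type of term: $c(k,l)S(y)=-\lambda_\b^i(k,l)\,y^{k-l}x^rg^{i-k}\,y\,g^{-1}$. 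Moving $y$ past $g^{i-k}$ gives the factor $\gamma^{-(i-k)}$, so this term equals $-\gamma^{k-i}\lambda_\b^i(k,l)\,y^{k+1-l}x^rg^{i-k-1}$.
\end{itemize}
Adding these, for $0<l<k+1$ the coefficient of $y^{k+1-l}x^rg^{i-(k+1)}$ is
\[
(\b\gamma^{-l}-\gamma^{k-i})\lambda_\b^i(k,l)+\lambda_\b^i(k,l-1).
\]
By definition $R_\b^i(k+1,l)=\b\gamma^{-l}-\gamma^{(k+1)-1-i}=\b\gamma^{-l}-\gamma^{k-i}$ for $l<k+1$. So this coefficient is exactly $\lambda_\b^i(k+1,l)$.

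The two boundary cases follow the same way.
\begin{itemize}
\item For $l=0$ the last type of term is absent, and the recursion $\lambda_\b^i(k+1,0)=R_\b^i(k+1,0)\lambda_\b^i(k,0)$ matches.
\item For $l=k+1$ only $c(k,k)S(g)=\lambda_\b^i(k,k)\,x^rg^{i-k-1}$ appears. Since $\lambda_\b^i(k,k)=1=\lambda_\b^i(k+1,k+1)$, this also matches.
\end{itemize}

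\textbf{Main obstacle.} The only delicate point is the sign and exponent bookkeeping in the first type of term, namely commuting $y$ past $g^{i-k}$. The exponent must come out as $\gamma^{k-i}$, matching the $\gamma^{k-1-i}$ in $R_\b^i(k+1,l)$ after the index shift $k\to k+1$. I would double-check this against $k=0$, $l=0$: there $c(1,0)=(\b-\gamma^{-i})\,y\,x^rg^{i-1}$, which agrees with Remark~\ref{rmk:lambdak0}(1).
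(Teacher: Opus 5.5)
Your proof is correct and follows the paper's argument: induction on $k$, substituting the inductive hypothesis into the recursion of Definition~\ref{def:ckl}, commuting $y$ past $g^{i-k}$ to pick up the factor $\gamma^{k-i}$, and recognizing the resulting coefficient as $R_\b^i(k+1,l)\lambda_\b^i(k,l)+\lambda_\b^i(k,l-1)=\lambda_\b^i(k+1,l)$. The only difference is that you write out the boundary cases $l=0$ and $l=k+1$, which the paper dismisses as similar.
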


\begin{proof}
We proceed by induction on $k$. The base case $k=0$ holds by definition. For the inductive step, assume that
\[
c_\b^{r,i}(k,l) = \lambda_\b^i(k,l)y^{k-l}x^rg^{i-k}
\]
holds for all $0 \leq l \leq k$.

For the case $l = k+1$, Definition \ref{def:ckl} immediately yields $c_\b^{r,i}(k+1,k+1) = x^rg^{i-(k+1)}$.
For $0 < l < k+1$, we have
\begin{eqnarray*}
	& & c_\b^{r,i}(k+1,l) \\
	&=& c_\b^{r,i}(k,l)S(y)+\b \gamma^{-l}y c_\b^{r,i}(k,l)S(g) +    c_\b^{r,i}(k,l-1)S(g)  \\[5pt]
	&=&\left(\lambda_\b^i(k,l)y^{k-l}x^rg^{i-k}\right) S(y)+ \b\gamma^{-l} y\left(\lambda_\b^i(k,l)y^{k-l} x^r g^{i-k}\right)S(g)+  \\[5pt]
	& & \left(\lambda_\b^i(k,l-1)y^{k-l+1}x^r g^{i-k}\right)S(g) \\[5pt]
	&=&\left(-\gamma^{k-i} \lambda_\b^i(k,l)+ \b\gamma^{-l}\lambda_\b^i(k,l) + \lambda_\b^i(k,l-1) \right)y^{k+1-l} x^r g^{i-(k+1)} \\[5pt]
	&=&\left(R_\b^i(k+1,l) \lambda_\b^i(k,l) + \lambda_\b^i(k,l-1) \right)y^{k+1-l} x^r g^{i-(k+1)} \\[5pt]
	&=&\lambda_\b^i(k+1,l)y^{k+1-l} x^r g^{i-(k+1)}. 
\end{eqnarray*}
Similarly, the case $c_\b^{r,i}(k+1,0) = \lambda_\b^i(k+1,0)y^{k+1} x^r g^{i-(k+1)}$ follows by a similar calculation. This completes the inductive step and the proof.
\end{proof}

Next, we verify that $\mathcal{A}_\b^{r,i}(m)$ is a comatrix. To begin, we provide the explicit formula for $\lambda_\b^i(k,l)$, which is analogous to Proposition 3.11 in \cite{ZLY25}. We adopt the convention that $\prod_{l=p+1}^{k} R_\b^{i}(l,0) = 1$ when $p = k$ (an empty product).

\begin{proposition}\label{prop:explictlambda}
For all  $i\in \Z$, $ \b \in \k^* $ and $0\leq p\leq k  $, 
\begin{equation}\label{explictlambda}
\lambda_\b^i (k,p) = \binom{k}{p}_{\gamma} \gamma^{-(k-p)p} \prod\limits_{l=p+1}^{k} R_\b^{i}(l,0).
\end{equation}
\end{proposition}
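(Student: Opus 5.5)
We argue by induction on $k$, verifying \eqref{explictlambda} directly against the recursion defining $\lambda_\b^i(k,p)$. Abbreviate $P(p,k):=\prod_{l=p+1}^{k} R_\b^i(l,0)$, so the claim reads $\lambda_\b^i(k,p)=\binom{k}{p}_\gamma\gamma^{-(k-p)p}P(p,k)$.

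\textbf{Base and boundary cases.} For $k=0$ the claim is $\lambda_\b^i(0,0)=1$. For $p=k$ the right-hand side is $\binom{k}{k}_\gamma\cdot 1\cdot 1=1$, matching the definition. For $p=0$ we have $\lambda_\b^i(k,0)=R_\b^i(k,0)\lambda_\b^i(k-1,0)$, whose solution is $\prod_{l=1}^k R_\b^i(l,0)=P(0,k)$, agreeing with the formula since $\binom{k}{0}_\gamma=1$ (this is also Remark~\ref{rmk:lambdak0}(1)).

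\textbf{Inductive step, $0<p<k$.} Assume the formula holds at level $k-1$. Then
\[
\lambda_\b^i(k,p)=R_\b^i(k,p)\binom{k-1}{p}_\gamma\gamma^{-(k-1-p)p}P(p,k-1)+\binom{k-1}{p-1}_\gamma\gamma^{-(k-p)(p-1)}P(p-1,k-1).
\]
The key observation is that $R_\b^i(k,p)=\b\gamma^{-p}-\gamma^{k-1-i}=\gamma^{-p}\bigl(\b-\gamma^{k-1-i+p}\bigr)$, which is a shift of $R_\b^i(l,0)=\b-\gamma^{l-1-i}$. Write $Q(l):=\b-\gamma^{l-1-i}$, so $R_\b^i(k,p)=\gamma^{-p}Q(k+p)$ and $P(p,k)=\prod_{l=p+1}^k Q(l)$. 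We must then show
\[
\binom{k}{p}_\gamma\gamma^{-(k-p)p}\prod_{l=p+1}^{k}Q(l)=\gamma^{-(k-p)p}\binom{k-1}{p}_\gamma Q(k+p)\prod_{l=p+1}^{k-1}Q(l)+\binom{k-1}{p-1}_\gamma\gamma^{-(k-p)(p-1)}\prod_{l=p}^{k-1}Q(l).
\]
Dividing by $\gamma^{-(k-p)p}\prod_{l=p+1}^{k-1}Q(l)$ reduces this to the scalar identity
\[
\binom{k}{p}_\gamma Q(k)=\binom{k-1}{p}_\gamma Q(k+p)+\gamma^{k-p}\binom{k-1}{p-1}_\gamma Q(p).
\]
(If some $Q(l)$ vanishes, the identity is polynomial in $\b$, so it suffices to prove it for generic $\b$, or simply to compare coefficients directly without dividing.)

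\textbf{Checking the scalar identity.} The coefficient of $\b$ gives $\binom{k}{p}_\gamma=\binom{k-1}{p}_\gamma+\gamma^{k-p}\binom{k-1}{p-1}_\gamma$, which is a standard $q$-Pascal rule. The constant terms, after multiplying by $-\gamma^{1+i}$, give $\binom{k}{p}_\gamma\gamma^{k}=\binom{k-1}{p}_\gamma\gamma^{k+p}+\gamma^{k}\binom{k-1}{p-1}_\gamma\gamma^{p}\cdot\gamma^{-p}$, that is, $\binom{k}{p}_\gamma=\gamma^p\binom{k-1}{p}_\gamma+\binom{k-1}{p-1}_\gamma$, which is the other $q$-Pascal rule.

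\textbf{Main obstacle.} The main difficulty is bookkeeping: getting the exponent of $\gamma$ right in $R_\b^i(k,p)$, and choosing the correct pairing of the two Pascal rules with the convention for $\binom{k}{p}_\gamma$. Both rules hold for Gaussian binomials, so either convention works provided it is fixed consistently. I would first verify the identity numerically for $k=2$, $p=1$ to pin down the signs.
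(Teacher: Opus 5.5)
Your proof is correct and follows essentially the same route as the paper: induction on $k$ via the defining recursion, factoring out $\gamma^{-(k-p)p}\prod_{l=p+1}^{k-1}R_\b^i(l,0)$, and matching the coefficient of $\b$ and the constant term against the two $q$-Pascal rules $\binom{k}{p}_\gamma=\binom{k-1}{p}_\gamma+\gamma^{k-p}\binom{k-1}{p-1}_\gamma$ and $\binom{k}{p}_\gamma=\gamma^{p}\binom{k-1}{p}_\gamma+\binom{k-1}{p-1}_\gamma$. Your concern about some $Q(l)$ vanishing is unnecessary: the full identity follows by multiplying the scalar identity by the common factor, so no division is ever needed.
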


\begin{proof}
We proceed by induction on $k$. The base case $k=0$ holds by definition. 
Assume that \eqref{explictlambda} holds for a fixed $k \geq 0$ and all $0 \leq p \leq k$. Consider the case $k+1$. For $0 \leq p \leq k+1$:
\begin{itemize}
	\item When $p=0$ or $p=k+1$, \eqref{explictlambda} holds by definition.
		
	\item For $0 < p < k+1$, the recursive definition gives:
		
\end{itemize}	
\begin{eqnarray*}
	& &\lambda_\b^{i}(k+1,p)   \\[5pt]
	&=&R_\b^{i}(k+1,p)\lambda_\b^{i}(k,p)+\lambda_\b^{i}(k,p-1)   \\[5pt]
	&=&(\b\gamma^{-p}-\gamma^{k-i}) \binom{k}{p}_{\gamma} \gamma^{-(k-p)p} \prod\limits_{l=p+1}^{k} R_\b^{i}(l,0) +\binom{k}{p-1}_{\gamma} \gamma^{-(k-p+1)(p-1)} \prod\limits_{l=p}^{k}R_\b^{i}(l,0)  \\[5pt]
	&=&\gamma^{-(k+1-p)p} \left(\prod\limits_{l=p+1}^{k} R_\b^{i}(l,0)  \right)  \left[ (\b\gamma^{-p}-\gamma^{k-i}) \gamma^p \binom{k}{p}_{\gamma}   +  \right.  \\[5pt]
	& &  \left.  (\b-\gamma^{p-1-i}) \gamma^{k+1-p} \binom{k}{p-1}_{\gamma}  \right]
		\\[5pt]
	&=&\gamma^{-(k+1-p)p} \left(\prod\limits_{l=p+1}^{k} R_\b^{i}(l,0)  \right) \left[\b \left(\binom{k}{p}_{\gamma}+ \gamma^{k+1-p}\binom{k}{p-1}_{\gamma} \right) - \right.    \\[5pt]
	& &  \left.  \gamma^{k-i} \left(\gamma^{p}\binom{k}{p}_{\gamma}+ \binom{k}{p-1}_{\gamma}  \right)   \right]
		\\[5pt]
	&=& \gamma^{-(k+1-p)p} \left(\prod\limits_{l=p+1}^{k} R_\b^{i}(l,0)  \right) \left(\b  \binom{k+1}{p}_{\gamma}- \gamma^{k-i} \binom{k+1}{p}_{\gamma} \right)
		\\[5pt]
	&=& \binom{k+1}{p}_{\gamma} \gamma^{-(k+1-p)p} \prod\limits_{l=p+1}^{k+1} R_\b^{i}(l,0).
\end{eqnarray*}
This completes the induction.	
\end{proof}

We now establish that $\A_\b^{r,i}(m)$ is a comatrix.
\begin{corollary}\label{cor:Aijisacomatrix}
For all $r,i \in \Z$, $\b \in \k^*$ and $m \geq 0$, $\A_\b^{r,i}(m)$ is a comatrix.
\end{corollary}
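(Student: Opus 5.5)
We prove the two comatrix identities
\[
\Delta\big(c_\b^{r,i}(k,l)\big)=\sum_{j=l}^{k} c_\b^{r,i}(k,j)\otimes c_\b^{r,i}(j,l),\qquad \varepsilon\big(c_\b^{r,i}(k,l)\big)=\delta_{k,l},
\]
by induction on $k$, using only the recursion of Definition~\ref{def:ckl} and the closed form of Proposition~\ref{pro:cijformula}. Since $\mathcal{A}_\b^{r,i}(m)$ is lower triangular, the sum in a comatrix entry only runs over $l\le j\le k$. The identities do not depend on $m$, so it suffices to prove them for all $0\le l\le k$. We abbreviate $c(k,l)=c_\b^{r,i}(k,l)$.

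\textbf{Counit.} By Proposition~\ref{pro:cijformula}, $c(k,l)=\lambda_\b^i(k,l)\,y^{k-l}x^rg^{i-k}$. Since $\varepsilon(y)=0$, this gives $\varepsilon(c(k,l))=0$ for $l<k$. For $l=k$ we get $\varepsilon(c(k,k))=\lambda_\b^i(k,k)=1$.

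\textbf{Coproduct.} The base case $k=0$ holds because $c(0,0)=x^rg^i$ is group-like. For the inductive step we need the coproduct formulas, which follow from $S$ being an anti-coalgebra map:
\[
\Delta(S(y))=S(g)\otimes S(y)+S(y)\otimes 1,\qquad \Delta(S(g))=S(g)\otimes S(g),\qquad \Delta(y)=y\otimes g+1\otimes y.
\]
Apply $\Delta$ to the three terms of $c(k+1,l)=c(k,l)S(y)+\b\gamma^{-l}y\,c(k,l)S(g)+c(k,l-1)S(g)$, with the convention that terms indexed outside $0\le l\le k$ vanish. Insert the induction hypothesis $\Delta(c(k,l))=\sum_j c(k,j)\otimes c(j,l)$ into each. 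The result is a sum of five types of tensors, each of the form $X\otimes Y$ with $X$ built from $c(k,j)$ and $Y$ built from $c(j,l)$ or $c(j,l-1)$.

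The goal is to regroup these five types into $\sum_j c(k+1,j)\otimes c(j,l)$, plus the diagonal term $c(k+1,k+1)\otimes c(k+1,l)$. To do this, expand each left factor $c(k+1,j)$ and the right factor $c(k+1,l)$ using the recursion of Definition~\ref{def:ckl}.

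\textbf{Commutation fact for the mismatched coefficients.} The left factors need the coefficient $\b\gamma^{-j}$, whereas the expansion produces $\b\gamma^{-l}$. We fix this using the closed form: $c(j,l)$ is a scalar multiple of $y^{j-l}x^rg^{i-j}$, and $gy=\gamma^{-1}yg$, so
\[
g\,c(j,l)\,g^{-1}=\gamma^{-(j-l)}c(j,l).
\]
For example, the term $\b\gamma^{-l}\,y\,c(k,j)S(g)\otimes g\,c(j,l)S(g)$ becomes $\b\gamma^{-j}\,y\,c(k,j)S(g)\otimes c(j,l)$, which is exactly the middle summand of $c(k+1,j)$ tensored with $c(j,l)$.

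\textbf{Matching the remaining terms.} The terms involving $S(y)$ on the right combine with $c(j,l)$ to give the summands $c(j,l)S(y)$ that appear in the recursion for $c(k+1,l)$ on the right-hand side. The $j=k+1$ terms come from the pieces $c(k,k)S(g)\otimes\cdots$, since $c(k+1,k+1)=c(k,k)S(g)$.

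\textbf{Main obstacle.} I expect the main difficulty to be the bookkeeping in this regrouping: an index shift $j\mapsto j+1$ in the $c(j,l-1)$ terms, and the boundary cases $l=0$ and $j=k+1$. If the direct matching becomes unwieldy, the fallback is to compare coefficients using the explicit $\lambda$'s. Concretely, write both sides in the basis $y^ag^bx^r\otimes y^{a'}g^{b'}x^r$, expand $\Delta(y^{k-l})$ via the $\gamma$-binomial formula, and check
\[
\lambda(k,l)\binom{k-l}{s}_{\gamma}\cdot(\text{power of }\gamma)=\lambda(k,l+s)\,\lambda(l+s,l)
\]
with Proposition~\ref{prop:explictlambda}. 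The $R$-products factor correctly, and $\binom{k}{l}_\gamma\binom{k-l}{s}_\gamma=\binom{k}{l+s}_\gamma\binom{l+s}{l}_\gamma$. This reduces the identity to tracking powers of $\gamma$.
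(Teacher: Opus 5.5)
Your argument is correct. Your main route differs from the paper's, while your fallback is exactly what the paper does. The paper does not induct: it applies $\Delta$ directly to the closed form $c_\b^{r,i}(k,l)=\lambda_\b^i(k,l)y^{k-l}x^rg^{i-k}$ and expands $(y\otimes g+1\otimes y)^{k-l}$ by the $\gamma$-binomial formula. This reduces the comatrix identity to the scalar identity $\lambda_\b^i(k,p+l)\lambda_\b^i(p+l,l)=\binom{k-l}{p}_{\gamma}\gamma^{-(k-(p+l))p}\lambda_\b^i(k,l)$, which the paper reads off from Proposition~\ref{prop:explictlambda}.

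Your induction uses only the recursion of Definition~\ref{def:ckl} and the relation $g\,c(j,l)\,g^{-1}=\gamma^{l-j}c(j,l)$. So it needs only the monomial shape of $c(j,l)$, never the explicit $q$-binomial formula for $\lambda$. It shows that the comatrix property is propagated by the recursion itself, mirroring how $\delta(y\cdot u)$ is built from $\delta(u)$. The paper's computation is shorter on the page but rests on Proposition~\ref{prop:explictlambda}.

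The bookkeeping you flag closes cleanly once stated precisely. The right factors to expand by the recursion are $c(j+1,l)$ for every $j$, not just $c(k+1,l)$. Set $c(k,l)=0$ for $l\notin\{0,\dots,k\}$. Then the recursion $c(k+1,l)=c(k,l)S(y)+\b\gamma^{-l}y\,c(k,l)S(g)+c(k,l-1)S(g)$ holds uniformly for all $l\in\Z$.

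At fixed $j$, the two families with $c(j,l)$ alone on the right (after your commutation step) combine to $\bigl(c(k,j)S(y)+\b\gamma^{-j}y\,c(k,j)S(g)\bigr)\otimes c(j,l)$. The remaining three families combine to
\[
c(k,j)S(g)\otimes\bigl(c(j,l)S(y)+\b\gamma^{-l}y\,c(j,l)S(g)+c(j,l-1)S(g)\bigr)=c(k,j)S(g)\otimes c(j+1,l).
\]
Shifting $j\mapsto j-1$ turns this into $c(k,j-1)S(g)\otimes c(j,l)$. The three pieces together give $\sum_j c(k+1,j)\otimes c(j,l)$. The boundary cases $l=0$ and $j=k+1$ are absorbed by the zero convention.
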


\begin{proof}
We set $c_\b^{r,i}(k,l) = 0$ for $l > k$. For any $0 \leq l,k \leq m$, we clearly have
\[
\varepsilon ( c_\b^{r,i}(k,l)) = \delta_{k,l}.
\]
It remains to show that
\[
\Delta(c_\b^{r,i}(k,l)) = \sum_{p=0}^m c_\b^{r,i}(k,p) \otimes c_\b^{r,i}(p,l).
\]
The case $l > k$ is trivial. Thus, it suffices to consider $0 \leq l \leq k$.
On the one hand, computing the left-hand side yields:
\begin{eqnarray*}
	& &\Delta(c_\b^{r,i}(k,l))  \\[5pt]
	&=&\lambda_\b^i(k,l) \Delta(y)^{k-l} \Delta(x)^{r} \Delta(g)^{i-k} \\[5pt]
	&=&\lambda_\b^i(k,l) (y\otimes g +1 \otimes y)^{k-l} (x\otimes x)^r (g\otimes g)^{i-k}\\[5pt]
	&=&\lambda_\b^i(k,l) \left(\sum\limits_{p=0}^{k-l} \binom{k-l}{p}_{\gamma}(y\otimes g)^{k-l-p} (1\otimes y)^p \right) (x^r \otimes x^r)  (g^{i-k}\otimes g^{i-k})\\[5pt]
	&=&\sum\limits_{p=0}^{k-l} \binom{k-l}{p}_{\gamma} \lambda_\b^i(k,l)  \gamma^{-(k-(p+l))p} \left(y^{k-(p+l)}x^rg^{i-k} \otimes y^p x^r g^{i-(p+l)} \right).  
\end{eqnarray*}	
On the other hand, for the right-hand side, we have:
\begin{eqnarray*}
\sum_{p=0}^m c_\b^{r,i}(k,p) \otimes c_\b^{r,i}(p,l)
&=& \sum_{p=l}^k c_\b^{r,i}(k,p) \otimes c_\b^{r,i}(p,l) \\
&=& \sum_{p=l}^k \lambda_\b^i(k,p) \lambda_\b^i(p,l) y^{k-p}x^rg^{i-k} \otimes y^{p-l}x^r g^{i-p}.
\end{eqnarray*}
By comparing the coefficients of the term $y^{k-(p+l)} x^r g^{i-k} \otimes y^p x^r g^{i-(p+l)}$ for any $0 \leq p \leq k-l$, it suffices to show that
\[
\lambda_\b^i(k,p+l)\lambda_\b^i(p+l,l) = \binom{k-l}{p}_{\gamma} \lambda_\b^i(k,l) \gamma^{-(k-(p+l))p}.
\]
This equality follows directly from Proposition \ref{prop:explictlambda}.
\end{proof}

\subsection{Further Analysis of the Comatrix}\label{subsection3}

In this subsection, we provide a detailed analysis of the comatrices defined in the preceding subsection, thereby deriving key properties of finite-dimensional simple Yetter-Drinfeld modules over $H$. These properties are essential to our classification.

Let $V$ be a simple Yetter-Drinfeld module over $H$ with $\dim_{\k} V = p+1$ for some $p \geq 0$. Let $v \in V$ be a standard element of type $(\a,\b,x^rg^i)$, where $\a,\b \in \k^*$ and $r,i \in \Z$. By Lemma \ref{lem:standbasis}, the set $\{v, y \cdot v, \dots, y^p \cdot v\}$ is a basis of $V$. The comodule structure of $V$ with respect to this basis is determined by the matrix $\A_\b^{r,i}(p)$.

We now turn our attention to the matrix $\A_\b^{r,i}(p+1)$. We consider a matrix of one order higher for several reasons. First, in Lemma \ref{lem:dimensionp}, it enables us to prove that the dimension of $V$ is determined by $\b$ and $i$. Second, in Lemma \ref{lem:yanzhengYDmo}, verifying the compatibility condition for the constructed Yetter-Drinfeld module requires information regarding the entries in the $(p+2)$-th row.

First, consider the case $\b^n \neq 1$. By Lemma \ref{lem:Vdimension}(2) and (3), we must have $p=n-1$. Then, by Proposition \ref{prop:explictlambda}, $c_\b^{r,i}(k,0) \neq 0$ for all $0 \leq k \leq n$, and $c_\b^{r,i}(n,l) = 0$ for all $1 \leq l \leq n-1$. Thus, we obtain the following result.

\begin{lemma}\label{lem:Abnotroot}
Let $r, i \in \Z$ and $\b \in \k^*$ such that $\b^n \neq 1$. Then the entries in the first column of $\mathcal{A}_\b^{r,i}(n)$ are all non-zero, and the entries in the last row are zero except for the first and last ones. Explicitly,
\[
\mathcal{A}_\b^{r,i}(n) = 
\begin{pmatrix}
	c_\b^{r,i}(0,0) & 0 & \cdots & 0 & 0 \\
	c_\b^{r,i}(1,0) & c_\b^{r,i}(1,1) & \cdots & 0 & 0 \\
	\vdots & \vdots & \ddots & \vdots & \vdots \\
	\vdots & \vdots & & \ddots & 0 \\
	c_\b^{r,i}(n,0) & 0 & \cdots & 0 & c_\b^{r,i}(n,n)
\end{pmatrix}.
\]
\end{lemma}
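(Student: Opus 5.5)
The plan is to read everything off the closed formulas already established. By Proposition \ref{pro:cijformula},
\[
c_\b^{r,i}(k,l)=\lambda_\b^i(k,l)\,y^{k-l}x^rg^{i-k}, \qquad 0\le l\le k\le n,
\]
and the monomials $y^{k-l}x^rg^{i-k}$ with $0\le k-l\le n-1$ are nonzero elements of the PBW-type basis $\{y^ag^bx^c\}$ of $H$. For $k=n$ and $l=0$ we get $y^n=1-g^n$, which is also nonzero. Hence an entry $c_\b^{r,i}(k,l)$ vanishes exactly when the scalar $\lambda_\b^i(k,l)$ vanishes. The whole statement therefore reduces to deciding which $\lambda_\b^i(k,l)$ are zero.

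\textbf{First column.} By Remark \ref{rmk:lambdak0}(1),
\[
\lambda_\b^i(k,0)=\prod_{l=1}^{k}\bigl(\b-\gamma^{l-1-i}\bigr).
\]
Each factor is nonzero, since $\b^n\neq 1$ means $\b$ is not an $n$-th root of unity, while every $\gamma^{l-1-i}$ is one. So $\lambda_\b^i(k,0)\neq 0$ for all $0\le k\le n$ (the case $k=0$ is $1$). Since $y^k x^r g^{i-k}\neq 0$, all first-column entries $c_\b^{r,i}(k,0)$ are nonzero. For the corner entry, $\lambda_\b^i(n,n)=1$ gives $c_\b^{r,i}(n,n)=x^rg^{i-n}\neq 0$.

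\textbf{Last row.} For $1\le l\le n-1$, Proposition \ref{prop:explictlambda} gives
\[
\lambda_\b^i(n,l)=\binom{n}{l}_{\gamma}\gamma^{-(n-l)l}\prod_{j=l+1}^{n}R_\b^i(j,0).
\]
Because $\gamma$ is a primitive $n$-th root of unity, the Gaussian binomial $\binom{n}{l}_\gamma$ vanishes for $0<l<n$. Its numerator contains $(n)_\gamma!$, which has the factor $(n)_\gamma=0$, while the denominator $(l)_\gamma!\,(n-l)_\gamma!$ has no zero factor. Thus $c_\b^{r,i}(n,l)=0$ for $1\le l\le n-1$.

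The entries above the diagonal are zero by definition of $\A_\b^{r,i}(n)$, so the matrix has the displayed shape. There is no real obstacle here. The only point needing care is the vanishing of the $\gamma$-binomial at a primitive root of unity, and the justification that a scalar multiple of a basis monomial is nonzero exactly when the scalar is, which uses the basis from the Remark in Section~\ref{section2.5}.
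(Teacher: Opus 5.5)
Your proposal is correct and follows essentially the paper's route. The paper also reads off the nonvanishing of the first column and the vanishing of $c_\b^{r,i}(n,l)$ for $1\le l\le n-1$ directly from the closed formulas in Propositions~\ref{pro:cijformula} and~\ref{prop:explictlambda}. Your extra checks, that $y^nx^rg^{i-n}=(1-x^w)x^rg^{i-n}\neq 0$ and that $\binom{n}{l}_\gamma=0$ for $0<l<n$, only make explicit what the paper leaves implicit.
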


Next, consider the case $\b^n = 1$. We immediately obtain the following result.

\begin{lemma}\label{lem:lastrow0}
Let $V$ be a simple Yetter-Drinfeld module over $H$ with $\dim_{\k} V = p+1$ for some $p \geq 0$. Let $v \in V$ be a standard element of type $(\a,\b,x^rg^i)$, where $\a,\b \in \k^*$ and $r,i \in \Z$. If $\b^n = 1$, then $c_\b^{r,i}(p+1,l) = 0$ for all $0 \leq l \leq p$. Explicitly,
\[
\mathcal{A}_\b^{r,i}(p+1) = 
\begin{pmatrix}
	c_\b^{r,i}(0,0) & 0 & \cdots & 0 & 0 \\
	c_\b^{r,i}(1,0) & c_\b^{r,i}(1,1) & \cdots & 0 & 0 \\
	\vdots & \vdots & \ddots & \vdots & \vdots \\
	\vdots & \vdots & & \ddots & 0 \\
	0 & 0 & \cdots & 0 & c_\b^{r,i}(p+1,p+1)
\end{pmatrix}.
\]
\end{lemma}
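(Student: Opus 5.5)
By Lemma~\ref{lem:Vdimension}(3), the hypothesis $\b^n=1$ (so $\a^w=\b^n=1$) gives $y^{p+1}\cdot v=0$. I will read off the vanishing of the last row of $\mathcal{A}_\b^{r,i}(p+1)$ from this fact together with the coaction formula \eqref{deltax^kv}, and then use the linear independence of the standard basis.

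First, Remark~\ref{rmk:ckl}(2) applies to $k=p+1$ because $v$ is a standard element of type $(\a,\b,x^rg^i)$. It gives
\[
\delta(y^{p+1}\cdot v)=\sum_{l=0}^{p+1} c_\b^{r,i}(p+1,l)\otimes y^l\cdot v .
\]
The left-hand side is $\delta(0)=0$. The term with $l=p+1$ vanishes because $y^{p+1}\cdot v=0$. Hence
\[
0=\sum_{l=0}^{p} c_\b^{r,i}(p+1,l)\otimes y^l\cdot v .
\]

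By Lemma~\ref{lem:standbasis}, the vectors $v, y\cdot v,\dots,y^p\cdot v$ form a basis of $V$, so they are linearly independent. In $H\otimes V$, a relation $\sum_l h_l\otimes u_l=0$ with the $u_l$ linearly independent forces every $h_l=0$. Therefore $c_\b^{r,i}(p+1,l)=0$ for all $0\le l\le p$. The last diagonal entry $c_\b^{r,i}(p+1,p+1)=x^rg^{i-(p+1)}$, which is nonzero by Proposition~\ref{pro:cijformula}, and the remaining entries are unchanged from $\mathcal{A}_\b^{r,i}(p)$. This gives the displayed shape.

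The only point needing care is that \eqref{deltax^kv} is valid for $k=p+1$, beyond the dimension of $V$. This is fine, because the recursion in Proposition~\ref{pro:spaniscomodule}(2) holds for every $k\ge 0$ regardless of linear independence. As a consistency check, one could also verify the vanishing directly from Proposition~\ref{prop:explictlambda}, namely $\prod_{l=p'+1}^{p+1}R_\b^i(l,0)=0$ or $\binom{p+1}{p'}_\gamma=0$. That route would require first knowing how $p$ is related to $\b$ and $i$, so I would avoid it and rely on the module-theoretic argument above.
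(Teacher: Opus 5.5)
Your proposal is correct and follows essentially the same argument as the paper: Lemma~\ref{lem:Vdimension}(3) gives $y^{p+1}\cdot v=0$, the coaction formula \eqref{deltax^kv} at $k=p+1$ then yields $\sum_{l=0}^{p} c_\b^{r,i}(p+1,l)\otimes y^l\cdot v=0$, and the linear independence of the standard basis from Lemma~\ref{lem:standbasis} forces each coefficient to vanish. Your added remarks, that \eqref{deltax^kv} holds for $k=p+1$ and that the diagonal entry $x^rg^{i-(p+1)}$ is nonzero, are accurate.
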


\begin{proof}
By Lemma \ref{lem:Vdimension}(3), we have $y^{p+1} \cdot v = 0$. Then, by Remark \ref{rmk:ckl}(2), we obtain
\[
0 = \delta(y^{p+1}\cdot v) = \sum_{l=0}^{p+1} c_\b^{r,i}(p+1,l) \otimes y^l \cdot v.
\]
Since $y^{p+1} \cdot v = 0$, the term corresponding to $l=p+1$ vanishes. Thus, the equation simplifies to
\[
\sum_{l=0}^{p} c_\b^{r,i}(p+1,l) \otimes y^l \cdot v = 0.
\]
By Lemma \ref{lem:standbasis}, the set $\{v, y \cdot v, \dots, y^p \cdot v\}$ is a basis of $V$. Therefore, the linear independence of these basis elements implies that $c_\b^{r,i}(p+1,l) = 0$ for all $0 \leq l \leq p$.
\end{proof}

We proceed to show that $c_\b^{r,i}(k,l) \neq 0$ for all $0 \leq l \leq k \leq p$. To this end, we first determine the value of $p$. Since $c_\b^{r,i}(p+1,0)=0$, we introduce the following definition. We will demonstrate that $p$ coincides with the integer $m$ defined below, and that $m$ is uniquely determined by $\b$ and $i$.

\begin{definition}
For any $r, i \in \Z$ and $\b \in \k^*$ such that $\b^n=1$, we define $\overline{\A_{\b}^{r,i}} := \A_{\b}^{r,i}(m)$, where $m \geq 0$ is the smallest integer such that $c_{\b}^{r,i}(m+1, 0) = 0$.
\end{definition}

The following properties are immediate from the definitions.

\begin{remark}\label{rmk:Atijblock}\hfill
\normalfont
\begin{itemize}
	\item[(1)] Recall that $c_{\b}^{r,i}(m+1,0) = 0$ if and only if $\lambda_{\b}^i(m+1,0) = 0$. Consequently, $m$ is the smallest integer such that $\lambda_{\b}^i(m+1,0) = 0$, and equivalently, the smallest integer such that $R_{\b}^i(m+1,0) = 0$.
	\item[(2)] Proposition \ref{prop:explictlambda} implies that $\lambda_{\b}^i(k,l) \neq 0$ for all $0 \leq l \leq k \leq m$. This means that all entries in the lower triangular part (including the diagonal) of $\overline{\A_{\b}^{r,i}}$ are non-zero.
	\item[(3)] Proposition \ref{prop:explictlambda} implies that $\lambda_{\b}^i(k,l) = 0$ for all $0 \leq l \leq m < k$. Consequently, the entries in the last row of $\A_{\b}^{r,i}(m+1)$ are all zero, except for the last one. 
\end{itemize}
\end{remark}

Motivated by Remark \ref{rmk:Atijblock}(3), we observe that $\A_{\b}^{r,i}(m+1)$ shares a similar structure with $\A_{\b}^{r,i}(p+1)$ regarding the last row. We now proceed to show that $m=p$. To establish this, we first define a function $\phi$ that allows us to give an explicit expression for $m$. For any $i \in \mathbb{Z}$, let $\phi(i)$ denote the unique integer in $\{1, 2, \dots, n\}$ satisfying
\[
i \equiv \phi(i) \pmod{n}.
\]

\begin{lemma}\label{lem:dimensionp}
Let $V$ be a simple Yetter-Drinfeld module over $H$ with $\dim_{\k} V = p+1$ for some $p \geq 0$. Let $v \in V$ be a standard element of type $(\a,\b,x^rg^i)$, where $\a,\b \in \k^*$ and $r,i \in \Z$. If $\b = \gamma^j$ for some $j \in \Z$, then
\[
p = m = n - \phi(-i-j),
\]
where $m \geq 0$ is the smallest integer such that $c_{\b}^{r,i}(m+1, 0) = 0$.
\end{lemma}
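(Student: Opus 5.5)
We first compute $m$ from the explicit formulas, then show $p=m$ by comparing $\mathcal{A}_\b^{r,i}(p+1)$ with $\mathcal{A}_\b^{r,i}(m+1)$.

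\emph{Computing $m$.} By Remark \ref{rmk:Atijblock}(1), $m$ is the least integer $\ge 0$ with $R_\b^i(m+1,0)=0$. With $\b=\gamma^j$ we have
\[
R_\b^i(m+1,0)=\gamma^j-\gamma^{m-i},
\]
and this vanishes iff $m\equiv i+j \pmod n$. Since $\gamma$ is primitive, the least nonnegative such $m$ lies in $\{0,\dots,n-1\}$. We check that $m=n-\phi(-i-j)$ is this value. Indeed, $\phi(-i-j)\in\{1,\dots,n\}$, so $n-\phi(-i-j)\in\{0,\dots,n-1\}$. Moreover $n-\phi(-i-j)\equiv -(-i-j)=i+j \pmod n$. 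This gives $m=n-\phi(-i-j)$.

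\emph{Proving $p\le m$.} Suppose $p>m$. By Remark \ref{rmk:Atijblock}(3), together with Remark \ref{rmk:ckl}(2) and Proposition \ref{pro:cijformula}, we get $c_\b^{r,i}(m+1,l)=0$ for $0\le l\le m$. Hence
\[
\delta(y^{m+1}\cdot v)=c_\b^{r,i}(m+1,m+1)\otimes y^{m+1}\cdot v=x^rg^{i-m-1}\otimes y^{m+1}\cdot v .
\]
Since $m+1\le p$, the vector $y^{m+1}\cdot v$ is nonzero, as it belongs to the standard basis (Lemma \ref{lem:standbasis}). So it is coinvariant of group-like type. By Proposition \ref{pro:spaniscomodule}(1) it is also an $x$- and $g$-eigenvector, hence a standard element, of type $(\a,\b\gamma^{-m-1},x^rg^{i-m-1})$.

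Applying Lemma \ref{lem:standbasis} to this standard element $u=y^{m+1}\cdot v$, the vectors $u,\,y\cdot u,\dots,y^p\cdot u$ form a basis of $V$. But $y^{p-m}\cdot u=y^{p+1}\cdot v=0$ by Lemma \ref{lem:Vdimension}(3), since $\b^n=1$. As $p-m\le p$, this contradicts linear independence. Hence $p\le m$.

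\emph{Proving $p\ge m$.} Suppose $p<m$. By Lemma \ref{lem:lastrow0}, $c_\b^{r,i}(p+1,0)=0$. But $p+1\le m$, so $\lambda_\b^i(p+1,0)\ne 0$ by Remark \ref{rmk:Atijblock}(2), and therefore $c_\b^{r,i}(p+1,0)\neq 0$ by Proposition \ref{pro:cijformula}. This is a contradiction, so $p=m$.

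The main obstacle is the direction $p\le m$, which requires producing a new standard element inside $V$. The point to verify carefully is that the vanishing of row $m+1$ of the comatrix really makes $y^{m+1}\cdot v$ a group-like-coinvariant vector. Once that is established, the clash with $y^{p+1}\cdot v=0$ and the standard-basis lemma follows as above.
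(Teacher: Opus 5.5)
Your proof is correct and takes the paper's route. You compute $m$ from $R_\b^i(m+1,0)=\gamma^j-\gamma^{m-i}$. You exclude $p<m$ using Lemma \ref{lem:lastrow0} together with Remark \ref{rmk:Atijblock}(2). You exclude $p>m$ by showing $y^{m+1}\cdot v$ is a standard element.

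The only cosmetic difference is at the end. The paper observes that $\operatorname{span}\{y^{m+1}\cdot v,\dots,y^p\cdot v\}$ is a nonzero proper Yetter-Drinfeld submodule. You instead apply Lemma \ref{lem:standbasis} to $u=y^{m+1}\cdot v$ and use $y^{p+1}\cdot v=0$; this is the same fact the paper's submodule claim tacitly relies on, so you simply make it explicit.
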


\begin{proof}
For any $k \geq 1$, we have
\[
R_\b^{i}(k,0) = \gamma^j - \gamma^{k-1-i} = \gamma^j(1 - \gamma^{k-1-i-j}).
\]
It follows from Remark \ref{rmk:Atijblock}(1) that $m = n - \phi(-i-j)$. It remains to show that $p = m$.
	
First, suppose for the sake of contradiction that $p < m$. Then Remark~\ref{rmk:Atijblock}(2) implies $c_\b^{r,i}(p+1, 0) \neq 0$, which contradicts Lemma \ref{lem:lastrow0}. Hence, we must have $p \geq m$.
	
Conversely, suppose that $p > m$. By Remark~\ref{rmk:Atijblock}(3), the coefficients $c_\b^{r,i}(m+1, l)$ vanish for all $0 \leq l \leq m$. Consequently,
\[
\delta(y^{m+1} \cdot v) = c_\b^{r,i}(m+1,m+1) \otimes y^{m+1} \cdot v = x^rg^{i-(m+1)} \otimes y^{m+1} \cdot v.
\]
Together with Proposition \ref{pro:spaniscomodule}(1), this implies that $y^{m+1} \cdot v$ is a standard element of $V$. Since $p > m$, $y^{m+1} \cdot v \neq 0$. Thus, the subspace $W = \operatorname{span}\{y^{m+1} \cdot v, \dots, y^p \cdot v\}$ is a non-zero proper Yetter-Drinfeld submodule of $V$. This contradicts the simplicity of $V$. Therefore, we conclude that $p = m$.
\end{proof}

We summarize the results obtained in Lemma \ref{lem:Abnotroot}, Lemma \ref{lem:lastrow0}, Remark \ref{rmk:Atijblock}, and Lemma \ref{lem:dimensionp} into the following proposition.

\begin{proposition}\label{pro:summary}
Let $V$ be a simple Yetter-Drinfeld module over $H$ with $\dim_{\k} V = p+1$ for some $p \geq 0$. Let $v \in V$ be a standard element of type $(\a,\b,x^rg^i)$, where $\a,\b \in \k^*$ and $r,i \in \Z$.
	\begin{itemize}
		\item[(1)] If $\b^n \neq 1$, then $p = n-1$. In this case, the entries in the first column of $\mathcal{A}_\b^{r,i}(n)$ are all non-zero, and the entries in the last row are zero except for the first and last ones. Explicitly,
		\[
		\mathcal{A}_\b^{r,i}(n) = 
		\begin{pmatrix}
			c_\b^{r,i}(0,0) & 0 & \cdots & 0 & 0 \\
			c_\b^{r,i}(1,0) & c_\b^{r,i}(1,1) & \cdots & 0 & 0 \\
			\vdots & \vdots & \ddots & \vdots & \vdots \\
			\vdots & \vdots & & \ddots & 0 \\
			c_\b^{r,i}(n,0) & 0 & \cdots & 0 & c_\b^{r,i}(n,n)
		\end{pmatrix}.
		\]
		\item[(2)] If $\b^n = 1$, let $\b = \gamma^j$ for some $j \in \Z$. Then $p = n - \phi(-i-j)$. In this case, the entries in the first column of $\mathcal{A}_\b^{r,i}(p+1)$ are all non-zero except for the last one, and the entries in the last row are zero except for the last one. Explicitly,
		\[
		\mathcal{A}_\b^{r,i}(p+1) = 
		\begin{pmatrix}
			c_\b^{r,i}(0,0) & 0 & \cdots & 0 & 0 \\
			c_\b^{r,i}(1,0) & c_\b^{r,i}(1,1) & \cdots & 0 & 0 \\
			\vdots & \vdots & \ddots & \vdots & \vdots \\
			\vdots & \vdots & & \ddots & 0 \\
			0 & 0 & \cdots & 0 & c_\b^{r,i}(p+1,p+1)
		\end{pmatrix}.
		\]
	\end{itemize}
\end{proposition}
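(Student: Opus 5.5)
Proposition~\ref{pro:summary} is a summary, so the plan is to assemble the earlier lemmas and fill in only the few missing details. Throughout, fix $V$ of dimension $p+1$ and a standard element $v$ of type $(\a,\b,x^rg^i)$. By Lemma~\ref{lem:standbasis}, $\{y^k\cdot v\}_{0\le k\le p}$ is a basis, and by Remark~\ref{rmk:ckl}(2) the coaction on this basis is governed by the entries $c_\b^{r,i}(k,l)$. These entries are given explicitly by Proposition~\ref{pro:cijformula} and Proposition~\ref{prop:explictlambda}:
\[
c_\b^{r,i}(k,l)=\binom{k}{l}_{\gamma}\gamma^{-(k-l)l}\Bigl(\prod_{s=l+1}^{k}R_\b^i(s,0)\Bigr)y^{k-l}x^rg^{i-k},
\qquad R_\b^i(s,0)=\b-\gamma^{s-1-i}.
\]
Since $y^{k-l}x^rg^{i-k}$ is a nonzero basis element of $H$ for $k-l<n$, each entry is nonzero exactly when its scalar coefficient is nonzero.

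For (1), assume $\b^n\ne1$, so $\a^w=\b^n\neq 1$. Lemma~\ref{lem:Vdimension}(4) gives $y^{p+1}\cdot v\neq0$, and then Lemma~\ref{lem:Vdimension}(2) forces $p+1\ge n$. Combined with Lemma~\ref{lem:Vdimension}(1), this yields $p=n-1$.

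Next come the entries of $\A_\b^{r,i}(n)$. In the first column, $\lambda_\b^i(k,0)=\prod_{s=1}^k(\b-\gamma^{s-1-i})$, and no factor vanishes because $\b$ is not an $n$-th root of unity. In the last row, with $0<l<n$, the Gaussian binomial $\binom{n}{l}_\gamma$ vanishes since $\gamma$ is a primitive $n$-th root of unity. Hence $c_\b^{r,i}(n,l)=0$ for $0<l<n$, while $c_\b^{r,i}(n,n)=x^rg^{i-n}\neq0$. This is exactly Lemma~\ref{lem:Abnotroot}.

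For (2), assume $\b=\gamma^j$. Lemma~\ref{lem:dimensionp} gives $p=m=n-\phi(-i-j)$, where $m$ is the minimal index with $c_\b^{r,i}(m+1,0)=0$. Remark~\ref{rmk:Atijblock}(2) then shows that the entries $c_\b^{r,i}(k,0)$ with $k\le p$ are nonzero. The entry $c_\b^{r,i}(p+1,0)$ vanishes by the definition of $m$. Finally, Lemma~\ref{lem:lastrow0} (or Remark~\ref{rmk:Atijblock}(3)) gives $c_\b^{r,i}(p+1,l)=0$ for $l\le p$, and $c_\b^{r,i}(p+1,p+1)=x^rg^{i-p-1}\ne0$.

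There is no real obstacle here. The only delicate point is checking, in (1), that the Gaussian binomials $\binom{n}{l}_\gamma$ vanish for $0<l<n$ while none of the factors $R_\b^i(s,0)$ do.
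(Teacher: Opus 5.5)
Your proposal is correct and follows the paper's own route: there, Proposition~\ref{pro:summary} is obtained simply by assembling Lemma~\ref{lem:Vdimension}, Lemma~\ref{lem:Abnotroot}, Lemma~\ref{lem:lastrow0}, Remark~\ref{rmk:Atijblock} and Lemma~\ref{lem:dimensionp}, and your explicit reason for the vanishing of the last row in (1), namely $\binom{n}{l}_{\gamma}=0$ for $0<l<n$, is exactly what lies behind the paper's appeal to Proposition~\ref{prop:explictlambda}. One small detail to add: the entry $c_\b^{r,i}(n,0)$ has $k-l=n$, so your basis-element remark does not cover it, and you should note that $y^n x^r g^{i-n}=x^rg^{i-n}-x^rg^{i}\neq 0$ because $g^n=x^w\neq 1$.
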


We conclude this section with the following two corollaries, the first of which asserts that the standard element of a finite-dimensional simple Yetter-Drinfeld module over $H$ is unique up to a scalar factor.

\begin{corollary}\label{cor:uniquesde}
Let $V$ be a simple Yetter-Drinfeld module over $H$ with $\dim_{\k} V = p+1$ for some $p \geq 0$. Let $v \in V$ be a standard element of type $(\a,\b,x^rg^i)$, where $\a,\b \in \k^*$ and $r,i \in \Z$. If $w$ is any standard element of $V$, then $w = \lambda v$ for some $\lambda \in \k^*$.
\end{corollary}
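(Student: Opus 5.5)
Let $w$ be a standard element of $V$ of type $(\a',\b',h')$. By Lemma~\ref{lem:standbasis}, $\{v_0,\dots,v_p\}$ with $v_k=y^k\cdot v$ is a basis of $V$, so we write $w=\sum_{k=0}^p \mu_k v_k$. Since $x$ acts on all of $V$ by the scalar $\a$ (Proposition~\ref{pro:spaniscomodule}(1)), we automatically get $\a'=\a$. The real content is to show that $\mu_k=0$ for all $k\ge 1$.

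The first step uses the $g$-action. By Proposition~\ref{pro:spaniscomodule}(1), $g\cdot v_k=\b\gamma^{-k}v_k$. By Lemma~\ref{lem:Vdimension}(1) we have $p+1\le n$, so the eigenvalues $\b\gamma^{-k}$ for $0\le k\le p$ are pairwise distinct, since $\gamma$ is a primitive $n$-th root of unity. Hence $g\cdot w=\b' w$ forces $w$ to be a multiple of a single basis vector: $w=\mu v_s$ for some $s$ with $0\le s\le p$ and $\mu\neq0$, and $\b'=\b\gamma^{-s}$.

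The second step uses the coaction to rule out $s\ge 1$. By Remark~\ref{rmk:ckl}(2) we have $\delta(v_s)=\sum_{l=0}^{s}c_\b^{r,i}(s,l)\otimes v_l$. A standard element must satisfy $\delta(v_s)=h'\otimes v_s$. Comparing coefficients in the basis $\{v_l\}$ then gives $c_\b^{r,i}(s,0)=0$. However, $c_\b^{r,i}(s,0)=\lambda_\b^i(s,0)\,y^s x^r g^{i-s}$ by Proposition~\ref{pro:cijformula}, and $y^sx^rg^{i-s}\neq 0$ because it is a basis element of $H$ (here $s\le p<n$). So we need $\lambda_\b^i(s,0)\neq0$ for $1\le s\le p$, and this follows from Proposition~\ref{pro:summary}, which is the main thing to check.
\begin{itemize}
\item If $\b^n\neq1$, then $p=n-1$ and the first column of $\mathcal{A}_\b^{r,i}(n)$ has all entries nonzero.
\item If $\b^n=1$, then $p=m$, and Remark~\ref{rmk:Atijblock}(2) gives $\lambda_\b^i(s,0)\neq0$ for all $s\le m$.
\end{itemize}
In both cases we reach a contradiction unless $s=0$. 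Therefore $w=\mu v$ with $\mu\in\k^*$.

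The main obstacle is simply making sure the nonvanishing of the first column covers every index $1\le s\le p$ in both cases. Proposition~\ref{pro:summary} supplies exactly this, so the argument is short.
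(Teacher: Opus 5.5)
Your proposal is correct and follows essentially the same route as the paper. The $g$-eigenvalues $\b\gamma^{-k}$ are distinct, which pins $w$ to a multiple of a single $y^s\cdot v$; comparing coaction coefficients then forces $c_\b^{r,i}(s,0)=0$, which contradicts the nonvanishing first column from Proposition~\ref{pro:summary} unless $s=0$. You also spell out two details the paper leaves implicit: that $p<n$ guarantees the eigenvalues are distinct, and that $y^sx^rg^{i-s}\neq 0$.
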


\begin{proof}
By Lemma \ref{lem:standbasis}, the set $\{v, y \cdot v, \dots, y^p \cdot v\}$ is a basis of $V$. By Proposition~\ref{pro:spaniscomodule}(1), for each $0 \leq k \leq p$, we have
\[
g \cdot (y^k \cdot v) = \b \gamma^{-k} (y^k \cdot v).
\]
Thus, the vectors $v, y \cdot v, \dots, y^{p} \cdot v$ are eigenvectors of $g$ corresponding to distinct eigenvalues $\b\gamma^{-k}$. Since $w$ is a standard element, it is also an eigenvector of $g$. Consequently, $w = \lambda y^k \cdot v$ for some $\lambda \in \k^*$ and $0 \leq k \leq p$. Moreover, there exists an element $h \in G(H)$ such that
\[
h \otimes w = \delta(w) = \lambda \delta(y^k \cdot v) = \lambda \sum_{l=0}^{k} c_\b^{r,i}(k,l) \otimes y^l \cdot v.
\]
Comparing the coefficients of the basis elements on both sides forces $c_\b^{r,i}(k,l) = 0$ for all $0 \leq l < k$. However, by Proposition \ref{pro:summary}, the first column of the matrix $\mathcal{A}_\b^{r,i}(p)$ contains non-zero entries (specifically, $c_\b^{r,i}(k,0) \neq 0$), which contradicts the previous deduction unless $k=0$. Therefore, we must have $k=0$, which implies $w = \lambda v$.
\end{proof}

The second result states that two finite-dimensional simple Yetter-Drinfeld modules over $H$ are isomorphic if and only if they possess standard elements of the same type. We say that two Yetter-Drinfeld modules $V$ and $W$ have \textit{standard elements of the same type} if there exist $\a, \b \in \k^*$ and $r, i \in \Z$ such that both $V$ and $W$ contain a standard element of type $(\a, \b, x^r g^i)$.

\begin{corollary}\label{cor:sdedetermineV}
Let $V$ and $W$ be two finite-dimensional simple Yetter-Drinfeld modules over $H$. Then $V$ is isomorphic to $W$ if and only if they have standard elements of the same type.
\end{corollary}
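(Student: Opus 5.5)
The forward direction is immediate. Let $f\colon V\to W$ be an isomorphism of Yetter-Drinfeld modules and let $v\in V$ be a standard element of type $(\a,\b,x^rg^i)$; such an element exists by Lemma \ref{lem:existenceofse}. Since $f$ is $H$-linear and $H$-colinear, $x\cdot f(v)=\a f(v)$, $g\cdot f(v)=\b f(v)$ and $\delta(f(v))=x^rg^i\otimes f(v)$. Moreover $f(v)\neq 0$, so $f(v)$ is a standard element of $W$ of the same type.

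For the converse, suppose $v\in V$ and $w\in W$ are both standard elements of type $(\a,\b,x^rg^i)$. Let $\dim V=p+1$ and $\dim W=q+1$.

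\textbf{Step 1: the dimensions agree.} By Proposition \ref{pro:summary}, $p$ depends only on $\b$ and $i$. It equals $n-1$ if $\b^n\neq1$, and $n-\phi(-i-j)$ if $\b=\gamma^j$. Since $W$ has a standard element with the same $\b$ and $i$, we get $q=p$. By Lemma \ref{lem:standbasis}, $\{y^k\cdot v\}_{0\le k\le p}$ and $\{y^k\cdot w\}_{0\le k\le p}$ are bases of $V$ and $W$.

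\textbf{Step 2: define the candidate isomorphism.} Let $f\colon V\to W$ be the linear map with $f(y^k\cdot v)=y^k\cdot w$ for $0\le k\le p$. It is bijective because it sends a basis to a basis.

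\textbf{Step 3: $f$ is colinear.} By Remark \ref{rmk:ckl}(2), both coactions are given by the same comatrix:
\[
\delta(y^k\cdot v)=\sum_{l=0}^{k}c_\b^{r,i}(k,l)\otimes y^l\cdot v,
\qquad
\delta(y^k\cdot w)=\sum_{l=0}^{k}c_\b^{r,i}(k,l)\otimes y^l\cdot w.
\]
So $f$ commutes with the coactions.

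\textbf{Step 4: $f$ is $H$-linear.} Compatibility with $x$ and $g$ is immediate from Proposition \ref{pro:spaniscomodule}(1). Compatibility with $y$ is also immediate on $y^k\cdot v$ for $k<p$. The main obstacle is the top vector, where we need $f(y^{p+1}\cdot v)=y^{p+1}\cdot w$.

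\emph{Case $\b^n=1$.} By Lemma \ref{lem:Vdimension}(3), $y^{p+1}\cdot v=0=y^{p+1}\cdot w$, so there is nothing to check.

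\emph{Case $\b^n\neq1$.} Here $p=n-1$, so $y^{p+1}=y^n=1-g^n$. Both vectors equal $(1-\b^n)$ times the standard element: $y^n\cdot v=(1-\b^n)v$ and $y^n\cdot w=(1-\b^n)w$. Since $f(v)=w$, this gives $f(y^{p+1}\cdot v)=y^{p+1}\cdot w$.

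Since $x^{\pm1},g,y$ generate $H$, $f$ is a bijective morphism of Yetter-Drinfeld modules, hence an isomorphism.
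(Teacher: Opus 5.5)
Your proof is correct and follows the paper's argument. Like the paper, you match the dimensions via Proposition~\ref{pro:summary} and send the standard basis $\{y^k\cdot v\}$ to $\{y^k\cdot w\}$; you then carry out the verification the paper calls ``straightforward,'' including the check of $y^{p+1}\cdot v$ separately in the cases $\b^n=1$ and $\b^n\neq 1$.
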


\begin{proof}
If $V$ is isomorphic to $W$, it is easy to see that they have standard elements of the same type. Conversely, assume there exist $\a, \b \in \k^*$ and $r, i \in \Z$ such that $v \in V$ is a standard element of type $(\a, \b, x^r g^i)$ and $w \in W$ is a standard element of type $(\a, \b, x^r g^i)$. By Proposition \ref{pro:summary}, $\dim_{\k} V = \dim_{\k} W = p+1$ for some $p \geq 0$. By Lemma \ref{lem:standbasis}, the set $\{v, y \cdot v, \dots, y^p \cdot v\}$ is a basis of $V$, and the set $\{w, y \cdot w, \dots, y^p \cdot w\}$ is a basis of $W$.
	
Define a linear map $f: V \to W$ by $f(y^k \cdot v) = y^k \cdot w$ for all $0 \leq k \leq p$. It is straightforward to check that $f$ is an isomorphism of Yetter-Drinfeld modules. This completes the proof.
\end{proof}

\section{Classification of Simple Yetter-Drinfeld Modules over $ H $ }\label{section4}

In this section, we provide a complete classification of finite-dimensional simple Yetter-Drinfeld modules over $H$, explicitly describing their structures and determining their isomorphism classes.

The section is organized as follows. In Section \ref{subsection4.1}, we present the classification of these modules over $H$. Subsequently, in Section \ref{subsection4.2}, we demonstrate that a subset of these modules can be realized as simple Yetter-Drinfeld modules over the algebra $A(d,m,\gamma,1)$ for suitable parameters $d$ and $m$.

\subsection{Simple Yetter-Drinfeld modules over $H$}\label{subsection4.1}

By Corollary \ref{cor:sdedetermineV}, the structure of $V$ is completely determined by the type of its standard element. Consequently, to classify the finite-dimensional simple Yetter-Drinfeld modules over $H$, it suffices to construct the Yetter-Drinfeld module associated with each type of standard element.

For any scalars $\a, \b \in \k^*$ and integers $r, i \in \Z$ satisfying $\a^w = \b^n$, we define the module $V(\a, \b, x^r g^i)$ as follows:

\begin{itemize}
	\item Let $\{v_0, v_1, \dots, v_m\}$ be a $\k$-basis of $V(\a, \b, x^r g^i)$, where
	\[
	m = \begin{cases}
		n - \phi(-i-j), & \text{if } \b = \gamma^j \text{ for some } j \in \Z, \\
		n - 1,          & \text{if } \b^n \neq 1;
	\end{cases}
	\]
	
	\item The action of $x$ is defined for $0 \leq k \leq m$ by
	\[
	x \cdot v_k = \a v_k;
	\]
	
	\item The action of $g$ is defined for $0 \leq k \leq m$ by
	\begin{equation}\label{basisofV}
		g \cdot v_k = \b\gamma^{-k} v_k;
	\end{equation}
	
	\item The action of $y$ is defined for $0 \leq k \leq m$ by
	\[
	y \cdot v_k = \begin{cases}
		v_{k+1},       & 0 \leq k < m, \\
		(1-\b^n) v_0,  & k = m;
	\end{cases}
	\]
	
	\item The coaction is defined for $0 \leq k \leq m$ by
	\[
	\delta(v_k) = \sum_{l=0}^{k} c_\b^{r,i}(k,l) \otimes v_l.
	\]
\end{itemize}

\begin{remark}\label{rmk:Vtijlambda}
\normalfont Let $V = V(\a, \b, x^r g^i)$.
\begin{itemize}
	\item[(1)] 	By Remark \ref{rmk:ckl}(1), the module structure is well-defined. That is, if $x^r g^i = x^t g^s$ in $H$, then the coactions defined for $V(\a, \b, x^r g^i)$ and $V(\a, \b, x^t g^s)$ coincide.
	\item[(2)] The element $v_0$ is a standard element of type $(\a,\b,x^rg^i)$. 
	\item[(3)] The braiding induced by $V$ is of triangular type.
\end{itemize}
\end{remark}

We first prove that the modules constructed above are indeed simple Yetter-Drinfeld modules.

\begin{lemma}\label{lem:yanzhengYDmo}
For any scalars $\a, \b \in \k^*$ and integers $r, i \in \Z$ satisfying $\a^w = \b^n$, the module $V(\a, \b, x^r g^i)$ is a simple Yetter-Drinfeld module over $H$.
\end{lemma}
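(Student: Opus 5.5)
We verify three things in turn: that $V=V(\a,\b,x^rg^i)$ is an $H$-module, that it is an $H$-comodule, and that the compatibility condition \eqref{compatibilitycondition} holds. Simplicity is then proved separately.

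\emph{Module structure.} We check the defining relations of $H$ on the basis $\{v_k\}$. The relations $xg=gx$ and $xy=yx$ are immediate, because $x$ acts by the scalar $\a$. For $yg=\gamma gy$, on $v_k$ with $k<m$ both sides give $\b\gamma^{-k}v_{k+1}$. For $k=m$ the left side is $\b\gamma^{-m}(1-\b^n)v_0$ and the right side is $\gamma\b(1-\b^n)v_0$. If $\b^n\neq1$ then $m=n-1$, so $\gamma^{-m}=\gamma$ and the two agree; if $\b^n=1$ both sides vanish. For $y^n=1-x^w=1-g^n$: we have $g^n$ acting by $\b^n$ and $x^w$ by $\a^w=\b^n$. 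Also $y^n v_k$ equals $(1-\b^n)v_k$ when $m=n-1$, since $y$ acts as a cyclic shift with one factor $(1-\b^n)$. When $m<n-1$ we have $\b^n=1$, so $y^{m+1}=0$ on $V$ and $y^n$ acts by $0=1-\b^n$.

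\emph{Comodule structure.} This is exactly Corollary \ref{cor:Aijisacomatrix} combined with Lemma \ref{lem:comoduleequicodition}.

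\emph{Compatibility.} By Lemma \ref{lem:compatiable} it suffices to check the generators $x^{\pm1},g,y$ on the basis vectors $v_k$. For the group-likes $x$ and $g$ we must show $\delta(h\cdot v_k)=h\,c(k,l)h^{-1}\otimes h\cdot v_l$. This follows from Proposition \ref{pro:cijformula}: conjugating $y^{k-l}x^rg^{i-k}$ by $g$ multiplies it by $\gamma^{-(k-l)}$, which matches the eigenvalue ratio $\b\gamma^{-k}/\b\gamma^{-l}$, and $x$ is central. For $y$ and $k<m$, compatibility is precisely the recursion \eqref{coefficientckl} defining $c_\b^{r,i}(k+1,l)$, since $y\cdot v_k=v_{k+1}$ (this is the computation in Proposition \ref{pro:spaniscomodule}(2) read in reverse).

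The main obstacle is the case $y\cdot v_m$. Here the compatibility condition requires
\[
\delta(y\cdot v_m)=\sum_{l=0}^{m} c_\b^{r,i}(m+1,l)\otimes v_l',
\]
where $v_l'=v_l$ for $l\le m$, and the would-be term $c(m+1,m+1)\otimes y\cdot v_m$ must be read with $y\cdot v_m=(1-\b^n)v_0$. We use Proposition \ref{pro:summary} and Remark \ref{rmk:Atijblock}(3). If $\b^n=1$, the left side is $0$, and on the right all $c(m+1,l)$ with $l\le m$ vanish while the last term is $c(m+1,m+1)\otimes 0$. If $\b^n\neq1$, then $m=n-1$. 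The left side is $(1-\b^n)x^rg^i\otimes v_0$. The right side is $c(n,0)\otimes v_0+c(n,n)\otimes(1-\b^n)v_0$, and the middle entries vanish by Lemma \ref{lem:Abnotroot}. We have $c(n,0)=(\b^n-1)y^nx^rg^{i-n}=(\b^n-1)(1-g^n)x^rg^{i-n}$ by Remark \ref{rmk:lambdak0}(1), and $c(n,n)=x^rg^{i-n}$. Their sum is $(1-\b^n)x^rg^{i-n}g^n=(1-\b^n)x^rg^i$, as required. Writing out the last-row computation carefully is the delicate step.

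\emph{Simplicity.} Let $0\neq U\subseteq V$ be a Yetter-Drinfeld submodule. By Lemma \ref{lem:existenceofse}, $U$ contains a standard element. Since $g$ has distinct eigenvalues $\b\gamma^{-k}$ on the $v_k$, this standard element is a multiple of some $v_k$. As in Corollary \ref{cor:uniquesde}, $\delta(v_k)$ is of the form $h\otimes v_k$ only if $k=0$, because $c(k,0)\neq0$ for $k\le m$ by Remark \ref{rmk:Atijblock}(2) and Lemma \ref{lem:Abnotroot}. Hence $v_0\in U$, and applying powers of $y$ gives $U=V$.
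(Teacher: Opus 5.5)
Your proposal is correct and follows essentially the same route as the paper: the comodule structure comes from Corollary \ref{cor:Aijisacomatrix}, compatibility is checked on $x,g,y$ with the case $k=m$ for $y$ split according to whether $\b^n=1$ or $\b^n\neq1$ using the shape of the last comatrix row, and simplicity follows from the uniqueness of standard elements as in Corollary \ref{cor:uniquesde}. Two of your steps are refinements of the paper: you verify $yg=\gamma gy$ and $y^n=1-x^w=1-g^n$ explicitly, which the paper calls straightforward, and you cite Remark \ref{rmk:Atijblock}(3) and Lemma \ref{lem:Abnotroot} directly instead of Proposition \ref{pro:summary}, whose hypotheses formally presuppose that $V$ is already a simple Yetter--Drinfeld module.
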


\begin{proof}
Let $\{v_0, v_1, \dots, v_m\}$ be the basis of $V(\a, \b, x^r g^i)$ as defined in \eqref{basisofV}, and denote $V(\a, \b, x^r g^i)$ simply by $V$. We first show that $V$ is a Yetter-Drinfeld module.
	
By Lemma~\ref{lem:comoduleequicodition} and Corollary~\ref{cor:Aijisacomatrix}, $V$ is a left $H$-comodule. It is straightforward to check that $V$ is also a left $H$-module. It remains to verify the compatibility condition \eqref{compatibilitycondition} for all $h \in H$ and $v \in V$. Define
\[
\rho(h \cdot v) = h_{(1)} v_{(-1)} S(h_{(3)}) \otimes h_{(2)} \cdot v_{(0)}.
\]
By Lemma~\ref{lem:compatiable}, it suffices to show that for all $0 \leq k \leq m$:
\begin{eqnarray}
	\delta(x \cdot v_k) &=& \rho(x \cdot v_k), \label{xcompatib} \\
	\delta(g \cdot v_k) &=& \rho(g \cdot v_k), \label{gcompatib} \\
	\delta(y \cdot v_k) &=& \rho(y \cdot v_k). \label{ycompatib}
\end{eqnarray}
	
The verification of \eqref{xcompatib} is trivial. For \eqref{gcompatib}, we compute:
\begin{eqnarray*}
		\rho (g \cdot v_k) 
		&=&\sum_{l=0}^{k} g c_\b^{r,i}(k,l) g^{-1} \otimes g \cdot v_l \\
		&=&\sum_{l=0}^{k} g \left(\lambda_\b^{i}(k,l) y^{k-l}x^r g^{i-k} \right) g^{-1} \otimes g \cdot v_l \\
		&=&\sum_{l=0}^{k} \gamma^{-(k-l)} \lambda_\b^{i}(k,l) y^{k-l}x^r g^{i-k} \otimes \b\gamma^{-l} v_l \\
		&=&\b\gamma^{-k} \sum_{l=0}^{k} c_\b^{r,i}(k,l) \otimes v_l \\
		&=&\delta(g \cdot v_k),
\end{eqnarray*}
so the condition holds.
	
To verify \eqref{ycompatib}, let us denote $y \cdot v_m$ by $v_{m+1}$. Using the definition of $c_\b^{r,i}(k,l)$, we compute
\begin{eqnarray*}
	\rho(y \cdot v_k) 
	&=&\sum_{l=0}^k \left(c_\b^{r,i}(k,l)S(y) \otimes 1 \cdot v_l + c_\b^{r,i}(k,l)S(g) \otimes y \cdot v_l + y c_\b^{r,i}(k,l)S(g) \otimes g \cdot v_l \right) \\
	&=&\sum_{l=0}^{k+1} c_\b^{r,i}(k+1,l) \otimes v_l
\end{eqnarray*}
for all $0 \leq k \leq m$. This establishes \eqref{ycompatib} for $0 \leq k < m$.
	
For the case $k=m$, we consider the following two cases.
	
\noindent \textbf{Case 1: $\b=\gamma^j$ for some $j \in \Z$.} \\
In this case, $m=n-\phi(-i-j)$ and $v_{m+1} = (1-\b^n)v_0 = 0$. By Proposition \ref{pro:summary}(2), we have
\[
\rho(y \cdot v_m) = \sum_{l=0}^{m+1} c_\b^{r,i}(m+1,l) \otimes v_l = 0 = \delta(y \cdot v_m).
\]
	
\noindent \textbf{Case 2: $\b^n \neq 1$.} \\
In this case, $m=n-1$ and $v_n = (1-\b^n)v_0$. By Proposition \ref{pro:summary}(1), we have
\begin{eqnarray*}
	\rho(y \cdot v_{n-1})
	&=&\sum_{l=0}^{n} c_\b^{r,i}(n,l) \otimes v_l \\
	&=&c_\b^{r,i}(n,0) \otimes v_0 + c_\b^{r,i}(n,n) \otimes v_n \\
	&=&\lambda_\b^{i}(n,0) y^n x^r g^{i-n} \otimes v_0 + x^r g^{i-n} \otimes (1-\b^n)v_0 \\
	&=&(1-\b^n) x^r g^i \otimes v_0 \\
	&=&\delta(y \cdot v_{n-1}).
\end{eqnarray*}
	
Therefore, the compatibility condition holds, and $V$ is a Yetter-Drinfeld module.
	
Finally, we prove that $V$ is simple. Let $W$ be a non-zero Yetter-Drinfeld submodule of $V$; we show that $W = V$. By Lemma \ref{lem:existenceofse}, we can choose a standard element $w$ of $W$. Similar to the argument in Corollary \ref{cor:uniquesde}, we can prove that $w = \lambda v_0$ for some $\lambda \in \k^*$. Since $v_0$ generates $V$ as a module (by the construction of the basis), it follows that $W = V$.
\end{proof}

Proposition \ref{prop:simplemustfinite} ensures that every simple Yetter-Drinfeld module $V$ over $H$ is finite-dimensional. Furthermore, Lemma \ref{lem:existenceofse} and Corollary \ref{cor:uniquesde} guarantee that each such module contains a standard element, which is unique up to a scalar factor. Since the structure of $V$ is completely determined by the type of its standard element (Corollary \ref{cor:sdedetermineV}), we now have a complete classification of simple Yetter-Drinfeld modules over $H$ alongside their isomorphism criteria. We summarize these results in the following theorem.

\begin{theorem}\label{thm:YDmodule}
Let $H = B(n,w,\gamma)$.
\begin{itemize}
	\item[(1)] Every simple Yetter-Drinfeld module over $H$ is isomorphic to $V(\a,\b,x^rg^i)$ for some scalars $\a, \b \in \k^*$ and integers $r, i \in \Z$ satisfying $\a^w = \b^n$.
	\item[(2)] Two such modules $V(\a,\b,x^rg^i)$ and $V(\a',\b',x^{r'}g^{i'})$ are isomorphic if and only if
	\[
	\a = \a', \quad \b = \b', \quad \text{and} \quad x^r g^i = x^{r'} g^{i'}.
	\]
\end{itemize}
\end{theorem}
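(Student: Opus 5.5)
The proof assembles results already established; the plan is to chain them in order.

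For part (1), let $V$ be any simple Yetter-Drinfeld module over $H$. By Proposition~\ref{prop:simplemustfinite}, $V$ is finite-dimensional, so Lemma~\ref{lem:existenceofse} gives a standard element $v\in V$ of some type $(\a,\b,h)$. Since $h\in G(H)$, we can write $h=x^rg^i$ for some $r,i\in\Z$. As observed after Definition~\ref{def:standardelement}, the relation $x^w=g^n$ forces $\a^w=\b^n$, so $V(\a,\b,x^rg^i)$ is defined. By Lemma~\ref{lem:yanzhengYDmo} it is a simple Yetter-Drinfeld module, and by Remark~\ref{rmk:Vtijlambda}(2) its vector $v_0$ is a standard element of the same type $(\a,\b,x^rg^i)$. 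Corollary~\ref{cor:sdedetermineV} then gives $V\cong V(\a,\b,x^rg^i)$.

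For part (2), the ``if'' direction follows from Remark~\ref{rmk:Vtijlambda}(1): when $x^rg^i=x^{r'}g^{i'}$, the two modules have identical action and coaction on the basis. Alternatively, both contain standard elements of the same type, so Corollary~\ref{cor:sdedetermineV} applies.

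The ``only if'' direction is the only step needing an argument. Let $f\colon V(\a,\b,x^rg^i)\to V(\a',\b',x^{r'}g^{i'})$ be an isomorphism. Since $f$ preserves the action and the coaction, $f(v_0)$ is a standard element of type $(\a,\b,x^rg^i)$ in the target. By Corollary~\ref{cor:uniquesde}, applied in the target module, $f(v_0)=\lambda v_0'$ with $\lambda\in\k^*$, where $v_0'$ is the standard element of type $(\a',\b',x^{r'}g^{i'})$. Comparing eigenvalues of $x$ and $g$ gives $\a=\a'$ and $\b=\b'$. Comparing coactions gives $x^rg^i\otimes\lambda v_0'=x^{r'}g^{i'}\otimes\lambda v_0'$, hence $x^rg^i=x^{r'}g^{i'}$ in $G(H)$.

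No step is a real obstacle. The only point needing care is that Corollary~\ref{cor:uniquesde} pins down the standard element only up to a scalar, so its type is well defined. This is what makes $(\a,\b,x^rg^i)$ an invariant of the isomorphism class.
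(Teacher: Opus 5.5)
Your proposal is correct and follows essentially the same route as the paper: finite-dimensionality (Proposition~\ref{prop:simplemustfinite}), existence of a standard element and its uniqueness up to a scalar (Lemma~\ref{lem:existenceofse}, Corollary~\ref{cor:uniquesde}), and the fact that the type determines the module (Corollary~\ref{cor:sdedetermineV}). You also make explicit two steps the paper leaves implicit: the modules $V(\a,\b,x^rg^i)$ serve as representatives via Lemma~\ref{lem:yanzhengYDmo} and Remark~\ref{rmk:Vtijlambda}, and the type is an isomorphism invariant, which gives the ``only if'' direction of (2).
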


\subsection{Connection to simple Yetter-Drinfeld modules over $A(d,m,\gamma,1)$}\label{subsection4.2}

In this subsection, we prove that when $\a$ and $\b$ are roots of unity, the module $V(\a,\b,x^rg^i)$ can be realized as a simple Yetter-Drinfeld module over $A(d,m,\gamma,1)$ for suitable choices of $d$ and $m$.

Recall that $R_\infty$ denotes the group of all roots of unity. Let $\mathcal{T}$ denote the set of types of all standard elements. We partition $\mathcal{T}$ into two subsets $\mathcal{T}_1$ and $\mathcal{T}_2$ as follows:
\begin{eqnarray}
	\mathcal{T} &=& \left\{ (\a,\b,x^r g^i) \mid \a,\b \in \k^*, \, r, i \in \Z, \text{ and } \a^w = \b^n \right\}, \label{indexT} \\
	\mathcal{T}_1 &=& \left\{ (\a,\b,x^r g^i) \in \mathcal{T} \mid \a,\b \in R_{\infty} \right\}, \label{indexT1} \\
	\mathcal{T}_2 &=& \left\{ (\a,\b,x^r g^i) \in \mathcal{T} \mid \a,\b \notin R_{\infty} \right\}. \label{indexT2}
\end{eqnarray}
Note that the condition $\a^w = \b^n$ implies that $\a \in R_\infty$ if and only if $\b \in R_\infty$. Consequently, there are no mixed cases, and we have the decomposition $\mathcal{T} = \mathcal{T}_1 \cup \mathcal{T}_2$.

Note that if $\b$ is a $d$-th root of unity, it is also a $dn$-th root of unity. In other words, we can choose $d$ such that $n \mid d$. 

\begin{lemma}\label{lem:relization}
For any $(\a,\b,x^r g^i) \in \mathcal{T}_1$, assume that $\a$ is an $m$-th root of unity and $\b$ is a $d$-th root of unity such that $n \mid d$. Then $V(\a,\b,x^r g^i)$ is a simple Yetter-Drinfeld module over $A(d,m,\gamma,1)$.
\end{lemma}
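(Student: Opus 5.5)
The plan is to use the same basis $\{v_0,\dots,v_m\}$ and the same formulas for the action and coaction as in the definition of $V(\a,\b,x^rg^i)$. We then check that these formulas factor through $A:=A(d,m,\gamma,1)$. The natural tool is the Hopf algebra map $\pi: H=B(n,w,\gamma)\to A$ given by $x\mapsto x$, $g\mapsto g$, $y\mapsto y$. This is well defined only if the relations of $H$ hold in $A$, and this fails in general: $g^n=x^w$ need not hold in $A$. So I will not push the structure forward along a quotient map. Instead I will check directly that the formulas define an $A$-module and an $A$-comodule and satisfy compatibility. (In the statement the letter $m$ is used both for the dimension parameter and for the order of $\a$. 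I will write $m'$ for the order of $\a$, so $A=A(d,m',\gamma,1)$, and keep $m$ for $\dim V-1$.)

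\textbf{Module structure.} The defining relations of $A$ are $xg=gx$, $xy=yx$, $yg=\gamma gy$, $g^d=x^{m'}=1$ and $y^n=1-g^n$. The first three hold exactly as over $H$. Since $x$ acts by $\a$ and $\a^{m'}=1$, we get $x^{m'}=1$. Since $g$ acts diagonally with eigenvalues $\b\gamma^{-k}$, $\b^d=1$ and $n\mid d$, we get $g^d=1$. For $y^n=1-g^n$: $g^n$ acts by $\b^n$ on every $v_k$. In the case $\b^n\ne1$, $y^n$ cycles through the basis and returns with the factor $1-\b^n$. In the case $\b^n=1$ we have $m+1\le n$, so $y^n$ acts as $0$, which equals $1-\b^n$. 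This is the same computation as over $H$, but it no longer uses $\a^w=\b^n$.

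\textbf{Comodule and compatibility.} The coefficients $c_\b^{r,i}(k,l)=\lambda_\b^i(k,l)y^{k-l}x^rg^{i-k}$ are read as elements of $A$, with $x^r g^{i-k}$ reduced modulo $x^{m'}=g^d=1$. The coproduct in $A$ is given by the same formulas as in $H$. The $q$-binomial expansion of $(y\otimes g+1\otimes y)^{k-l}$ is also the same, because it only uses $yg=\gamma gy$. Hence the computation in Corollary~\ref{cor:Aijisacomatrix} carries over word for word, and the matrix is a comatrix over $A$; by Lemma~\ref{lem:comoduleequicodition}, $V$ is an $A$-comodule. Compatibility is checked on the generators $x,g,y$, as allowed by Lemma~\ref{lem:compatiable}, and the computations in Lemma~\ref{lem:yanzhengYDmo} go through unchanged.

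\textbf{Main obstacle.} The delicate point is the last step of Case~2, which uses $y^n x^r g^{i-n}=(1-g^n)x^rg^{i-n}$. In $H$ this is rewritten using $g^n=x^w$. In $A$ we have instead $\lambda_\b^i(n,0)y^n x^rg^{i-n}=(\b^n-1)(x^rg^{i-n}-x^rg^i)$. Adding $(1-\b^n)x^rg^{i-n}$ leaves $(1-\b^n)x^rg^i$, so the identity holds in $A$ directly, using only $y^n=1-g^n$. Note that $\a^w=\b^n$ was only needed over $H$.

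\textbf{Simplicity.} Any nonzero Yetter-Drinfeld submodule $W$ of $V$ over $A$ is finite-dimensional. Its socle lies in $\bigoplus_h V_h$, and since $g$ and $x$ commute we can pick a common eigenvector in some $V_h$, i.e.\ a standard element $w$. As in Corollary~\ref{cor:uniquesde}, the distinct $g$-eigenvalues force $w=\lambda y^k\cdot v_0$. The entries $c_\b^{r,i}(k,0)$ are nonzero scalar multiples of $y^kx^rg^{i-k}$; these are still nonzero in $A$, because $\{y^ig^jx^k\}$ with $0\le i<n$ is a basis of $A$. This forces $k=0$, so $v_0\in W$, and since $v_0$ generates $V$ as a module, $W=V$.
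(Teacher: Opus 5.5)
Your proposal is correct and follows the paper's route. The paper's proof of this lemma is the one line ``identical to that of Lemma~\ref{lem:yanzhengYDmo}'', and you have spelled out exactly where the relations $x^{m}=g^{d}=1$ and $y^n=1-g^n$ of $A(d,m,\gamma,1)$ replace those of $H$. One small simplification: $y^kx^rg^{i-k}\neq 0$ in $A$ for $k\le m$ already follows from $y^k\cdot v_0=v_k\neq 0$ in the $A$-module $V$, so you do not need to invoke a PBW basis of $A$.
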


\begin{proof}
The proof is identical to that of Lemma \ref{lem:yanzhengYDmo}.
\end{proof}

We conclude this section with the following remark.

\begin{remark}\hfil\label{rmk:nonunique_realization}
\normalfont
	\begin{itemize}
		\item[(1)] Note that if $\a$ is an $m$-th root of unity, it is also a $2m$-th root of unity; similarly, if $\b$ is a $d$-th root of unity, it is also a $2d$-th root of unity. Thus, by Lemma \ref{lem:relization}, $V(\a,\b,x^r g^i)$ is also a simple Yetter-Drinfeld module over $A(2d,2m,\gamma,1)$. This demonstrates that the module admits multiple realizations (i.e., the realization is not unique).
		
		\item[(2)] If $(\a,\b,x^r g^i) \in \mathcal{T}_2$, then $V(\a,\b,x^r g^i)$ cannot be realized as a Yetter-Drinfeld module over any finite-dimensional pointed Hopf algebra. The reason is straightforward: group-like elements of a finite-dimensional pointed Hopf algebra have finite order. Consequently, their action on any eigenvector (such as a standard element) must be multiplication by a root of unity. This contradicts the definition of $\mathcal{T}_2$, where $\a$ and $\b$ are not roots of unity.
	\end{itemize}
\end{remark}

\section{Finite-dimensional Nichols algebras over the simple modules $ V(\a,\b,x^rg^i) $ }\label{section5}

In this section, we classify all Yetter-Drinfeld modules $V(\a, \b, x^r g^i)$ such that the associated Nichols algebra $\B(V(\a, \b, x^r g^i))$ is finite-dimensional. We divide our discussion into two main cases, depending on whether the type $(\a, \b, x^r g^i)$ belongs to $\mathcal{T}_1$ or $\mathcal{T}_2$.

\subsection{The case $(\a, \b, x^r g^i) \in \mathcal{T}_1$}

We begin by addressing the first case.  Assume that $\a$ is a primitive $m$-th root of unity and $\b$ is a $d$-th root of unity such that $n \mid d$. Throughout this subsection, we fix the integers $d$ and $m$. Let $\xi$ be a primitive $d$-th root of unity. Let $G = \langle g \rangle \times \langle x \rangle$ be the direct product of cyclic groups of orders $d$ and $m$, generated by $g$ and $x$, respectively. Let $A_0$ and $A_1$ denote the algebras $A(d,m,\gamma,0)$ and $A(d,m,\gamma,1)$, respectively, and let $V = V(\a, \b, x^r g^i)$.

We first consider the case $n=1$. In this setting, $V = \k\{v\}$ is a one-dimensional Yetter-Drinfeld module with braiding given by $c(v \otimes v) = \a^r \b^i v \otimes v$. Since $\a$ and $\b$ are roots of unity, the scalar $\a^r \b^i$ is also a root of unity. Consequently, the Nichols algebra $\B(V)$ is finite-dimensional if and only if $\a^r \b^i \neq 1$ (see Remark \ref{rmk:exampledim1}).

Now assume $n \geq 2$. Recall from Lemma \ref{lem:bosination} and Lemma \ref{lem:2cocycle} that $A_0$ can be realized as a bosonization of a Nichols algebra over $\k G$, whereas $A_1$ is a 2-cocycle deformation of $A_0$. We adopt the notation used therein.

By Lemma \ref{lem:relization}, $V$ is a simple Yetter-Drinfeld module over $A_1$. Consequently, determining whether the Nichols algebra $\B(V)$ is finite-dimensional in the category ${}^H_H \mathcal{YD}$ reduces to the same question in the category ${}^{A_1}_{A_1} \mathcal{YD}$. By invoking the results of Andruskiewitsch and Angiono \cite{AA20}, we can transform this into the problem of determining whether a Nichols algebra of diagonal type is finite-dimensional. This classification was completed by Heckenberger using root systems \cite{H08,H09}.

We begin by recalling the bijective correspondence between simple Yetter-Drinfeld modules over $\k G$ and those over $A_0 = \B(V_0) \biproduct \k G$, as established in \cite{AA20, AHS10}.

The category $\YD{\k G}$ is semisimple, and its simple objects are parameterized by tuples of integers $(d_1, d_2, m_1, m_2)$ satisfying $0 \leq d_1, d_2 < d$ and $0 \leq m_1, m_2 < m$. Explicitly, the simple object $\lambda_{d_1, m_1}^{d_2, m_2}$ associated with the tuple $(d_1, d_2, m_1, m_2)$ is the one-dimensional vector space $\k\{v\}$ generated by an element $v$, with the action and coaction given respectively by:
\begin{itemize}
	\item The action is defined by
	\begin{equation*}
		g \cdot v = \xi^{d_1} v \quad \text{and} \quad x \cdot v = \a^{m_1} v;
	\end{equation*}
	
	\item The coaction is defined by
	\begin{equation*}
		\delta(v) = g^{d_2} x^{m_2} \otimes v.
	\end{equation*}    
\end{itemize}

Observe that the module $V_0$ described in Lemma \ref{lem:bosination} corresponds to $\lambda_{d_1, 0}^{1, 0}$, provided that $\xi^{d_1} = \gamma$. Furthermore, the notation $\lambda_{d_1, m_1}^{d_2, m_2}$ is well-defined for any integers $d_1, d_2, m_1, m_2 \in \Z$. Moreover, we have the identification
\[
\lambda_{d_1, m_1}^{d_2, m_2} = \lambda_{d_1', m_1'}^{d_2', m_2'}
\]
whenever $d_1 \equiv d_1' \pmod d$, $d_2 \equiv d_2' \pmod d$, $m_1 \equiv m_1' \pmod m$, and $m_2 \equiv m_2' \pmod m$.

Given any simple Yetter-Drinfeld module $\lambda$ over $\k G$, take $W = V_0 \oplus \lambda$. The Nichols algebra $\B(W)$ admits a decomposition $\B(W) = \mathcal{K} \biproduct \B(V_0)$, where $\mathcal{K}$ is a Hopf algebra in the category $\YD{A_0}$. Define
\[
L(\lambda) = \operatorname{ad}_c \B(V)(\lambda).
\]
Then $L(\lambda) \subset \mathcal{K}$ is a simple Yetter-Drinfeld module over $A_0$; see \cite[Section 3.3]{AHS10}. Moreover, we have the following results.

\begin{proposition}\label{pro:todiagonal}
Let $n\geq 2$. Then
\begin{itemize}
	\item[(1)] The assignment $\lambda_{d_1, m_1}^{d_2, m_2} \mapsto L(\lambda_{d_1, m_1}^{d_2, m_2})$ establishes a bijective correspondence between the simple objects of $\YD{\k G}$ and the simple objects of $\YD{A_0}$, for all $0 \leq d_1, d_2 < d$ and $0 \leq m_1, m_2 < m$.
	\item[(2)] The Nichols algebra $\B(L(\lambda_{d_1, m_1}^{d_2, m_2}))$   is finite-dimensional if and only if  $\B(V_0 \oplus  \lambda_{d_1, m_1}^{d_2, m_2} )$ is finite-dimensional.
\end{itemize}
\end{proposition}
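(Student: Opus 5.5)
This proposition is essentially a specialization of the general theory of Andruskiewitsch--Heckenberger--Schneider \cite{AHS10} and Andruskiewitsch--Angiono \cite{AA20}. The plan is to verify their hypotheses for $A_0=\B(V_0)\biproduct\k G$ and then read off both statements.

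First, the setup. For $n\ge 2$, $V_0=\lambda^{1,0}_{d_1,0}$ with $\xi^{d_1}=\gamma$ is a one-dimensional object of $\YD{\k G}$ with braiding scalar $q=\gamma$, a primitive $n$-th root of unity. Hence $\B(V_0)=\k[y]/(y^n)$ is finite-dimensional. It is of Cartan type $A_1$, so its Weyl groupoid is finite.

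Second, the reflection step. For any simple $\lambda=\lambda^{d_2,m_2}_{d_1,m_1}$, the sum $W=V_0\oplus\lambda$ is a two-dimensional object of diagonal type over $\k G$. Since $\B(V_0)$ is finite-dimensional, $\operatorname{ad}_c(V_0)^N(\lambda)=0$ for $N$ large. Thus $W$ admits the reflection at the vertex $V_0$ in the sense of \cite{AHS10}. The decomposition $\B(W)\cong\mathcal K\biproduct\B(V_0)$ then holds, with $\mathcal K$ a Hopf algebra in $\YD{A_0}$.

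The key input is \cite[Prop.~8.6 / Section 3.3]{AHS10}: $\mathcal K\cong\B(L(\lambda))$ as braided Hopf algebras in $\YD{A_0}$, where $L(\lambda)=\operatorname{ad}_c\B(V_0)(\lambda)$. Equivalently, $\mathcal K$ is generated by $L(\lambda)$ and is strictly graded, so it is the Nichols algebra of $L(\lambda)$. This gives
\[
\dim\B(W)=\dim\B(L(\lambda))\cdot\dim\B(V_0)=n\dim\B(L(\lambda)).
\]
Part (2) follows at once, because $\dim\B(V_0)=n$ is finite.

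For part (1) I would follow \cite{AA20}. Simplicity of $L(\lambda)$ in $\YD{A_0}$ is shown in \cite[Section 3.3]{AHS10}. For the bijection:
\begin{itemize}
\item Every simple object of $\YD{A_0}$ has a nonzero $\B(V_0)$-coinvariant part that is a $\k G$-Yetter--Drinfeld submodule. This is the lowest-degree component, since $\B(V_0)$ is a graded connected coalgebra.
\item A simple $\k G$-summand $\lambda$ of that part generates the whole module as $\operatorname{ad}_c\B(V_0)(\lambda)$, so the module is isomorphic to $L(\lambda)$.
\item For injectivity, I would show that $\lambda$ is recovered from $L(\lambda)$ as its socle of $\B(V_0)$-coinvariants (the space annihilated by the coaction of $y$). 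Distinct $\lambda$ therefore give non-isomorphic $L(\lambda)$.
\end{itemize}
Equivalently, \cite{AA20} identifies $\YD{A_0}$-simples with simple modules over the Drinfeld double, which are highest-weight modules parametrized by the characters of the group part, i.e.\ by the simples of $\YD{\k G}$.

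The main obstacle is the injectivity and surjectivity in (1). There, one must check carefully that the coinvariant "lowest weight" space of $L(\lambda)$ is exactly $\lambda$ and is one-dimensional. I would compute it directly: $L(\lambda)$ has basis $(\operatorname{ad}_c y)^k(v)$ for $0\le k<N$, where $N$ is the least $k$ with $(k)_\gamma^!\prod_{s<k}(1-\gamma^{s}q_{12}q_{21})=0$. The coaction of $y$ lowers $k$, so only $k=0$ survives in the coinvariants. This reproduces $\lambda$ with its $G$-grading and action, giving the inverse map.
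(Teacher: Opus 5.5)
Your route is essentially the paper's. The paper proves this proposition purely by citation: part (1) from \cite[Lemma 3.3, Proposition 3.5]{AHS10} and \cite[Proposition 2.9]{AA20}, part (2) from \cite[Proposition 2.10]{AA20}. Your argument for (2) is the standard argument behind the last citation: $\B(W)\cong\mathcal K\biproduct\B(V_0)$ with $\mathcal K\cong\B(L(\lambda))$, hence $\dim\B(W)=n\cdot\dim\B(L(\lambda))$. Injectivity in (1) via the comodule socle of $L(\lambda)$ is also fine. This does need the coefficient of $y^k\otimes v$ in $\Delta\bigl((\operatorname{ad}_c y)^k(v)\bigr)$, namely $\prod_{s<k}(1-\gamma^s q_{12}q_{21})$, to be nonzero for $k<N$, which holds.

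The step that does not hold up is surjectivity in your self-contained sketch of (1). For a simple $M\in\YD{A_0}$ you correctly find a one-dimensional $\lambda=\k v$ with $\delta(v)\in\k G\otimes v$ and $M=A_0\cdot v=\operatorname{span}\{y^k\cdot v\}$. However, the inference ``$\lambda$ generates $M$, so $M\cong L(\lambda)$'' is a non sequitur. Writing $M$ as $\operatorname{ad}_c\B(V_0)(\lambda)$ already presupposes that $M$ sits inside $\B(V_0\oplus\lambda)$, which is exactly what must be proved. On an abstract $M$ the action of $y$ is not an $\operatorname{ad}_c$, and what you need is a \emph{uniqueness} statement: a simple object containing a socle vector of type $\lambda$ is determined up to isomorphism by $\lambda$. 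The socle computation for $L(\lambda)$, which you single out as the main obstacle, yields simplicity of $L(\lambda)$ and injectivity, but it says nothing about an arbitrary $M$.

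To close the gap you have two options. One is to invoke \cite[Proposition 2.9]{AA20}, as the paper does. The other is to rerun Section~\ref{section3} for $A_0$:
\begin{itemize}
\item The coaction on $y^k\cdot v$ is forced by the recursion \eqref{coefficientckl} from the type of $v$.
\item $y^{p+1}\cdot v=0$: if $p+1<n$ the $g$-eigenvalues are distinct, and otherwise $y^n=0$ in $A_0$.
\item Simplicity forces $p$ to be the least $m$ with $c(m+1,0)=0$, exactly as in Lemma~\ref{lem:dimensionp}. In $A_0$ this element also vanishes once $m+1=n$, since $y^n=0$.
\end{itemize}
Since $L(\lambda)$ is simple and contains a vector $v'$ of the same type, the map $y^k\cdot v\mapsto y^k\cdot v'$ is then an isomorphism $M\to L(\lambda)$, as in Corollary~\ref{cor:sdedetermineV}.
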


\begin{proof}
Part (1) follows from \cite[Lemma 3.3 and Proposition 3.5]{AHS10} and \cite[Proposition 2.9]{AA20}. Part (2) follows from \cite[Proposition 2.10]{AA20}.
\end{proof}

Using the 2-cocycle described in Lemma \ref{lem:2cocycle}, we can reduce the problem of determining the finite dimensionality of $\B(V)$ to determining that of a Nichols algebra of diagonal type. We adopt the notation introduced above.

\begin{corollary}\label{cor:todiagonal}
Let $n \geq 2$. If $(\a, \b, x^r g^i) \in \mathcal{T}_1$, then $\B(V(\a, \b, x^r g^i))$ is finite-dimensional if and only if $\B(W(\a,\b,x^rg^i))$ is finite-dimensional, where $W(\a,\b,x^rg^i) = V_0 \oplus V_1$. Here, $V_0$ is as in Lemma \ref{lem:bosination}, while $V_1 = \k\{v\}$ denotes the Yetter-Drinfeld module over $\k G$ defined by
\[
g \cdot v = \b^{-1} v, \quad x \cdot v = \a v, \quad \text{and} \quad \delta(v) = x^r g^{-i} \otimes v.
\]
\end{corollary}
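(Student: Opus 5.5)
The argument is a chain of equivalences that ends in Proposition~\ref{pro:todiagonal}. By Lemma~\ref{lem:relization}, $V=V(\a,\b,x^rg^i)$ is a simple object of ${}^{A_1}_{A_1}\mathcal{YD}$. Its braiding, and hence $\B(V)$, depends only on the $H$-action and $H$-coaction. These factor through the quotient relevant to $A_1$, since all group-like elements involved act through $\langle g\rangle\times\langle x\rangle$. So $\dim\B(V)$ computed over $H$ equals the same dimension computed over $A_1$.

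Next we transport $V$ along the inverse of the cocycle equivalence. By Lemma~\ref{lem:2cocycle}, $A_1=(A_0)_\sigma$. Since $\sigma^{-1}$ is a cocycle for $A_1$ with $(A_1)_{\sigma^{-1}}=A_0$, Theorem~\ref{thm:cocycle-equivalence} gives a braided equivalence ${}^{A_1}_{A_1}\mathcal{YD}\to{}^{A_0}_{A_0}\mathcal{YD}$. This equivalence sends $V$ to a simple object $V_{\sigma^{-1}}$ with $\dim\B(V)=\dim\B(V_{\sigma^{-1}})$. By Proposition~\ref{pro:todiagonal}(1), $V_{\sigma^{-1}}\cong L(\lambda)$ for a unique simple $\lambda=\lambda^{d_2,m_2}_{d_1,m_1}\in\YD{\k G}$. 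By Proposition~\ref{pro:todiagonal}(2), $\B(V)$ is finite-dimensional if and only if $\B(V_0\oplus\lambda)$ is. It remains to show $\lambda\cong V_1$, that is, $g$ acts by $\b^{-1}$, $x$ acts by $\a$, and the coaction is $x^rg^{-i}\otimes v$.

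\textbf{Identifying $\lambda$ (the main obstacle).} In $L(\lambda)=\operatorname{ad}_c\B(V_0)(\lambda)$, the generating line $\lambda$ is annihilated by the braided derivation in the $V_0$ direction. I expect it to correspond to the unique line of $V_{\sigma^{-1}}$ that is $\k G$-isotypic and has group-like coaction, namely the standard element $v_0$ (Corollary~\ref{cor:uniquesde}). Since $\sigma$ is the identity on the coaction, $\delta(v_0)=x^rg^i\otimes v_0$ is unchanged in ${}^{A_0}_{A_0}\mathcal{YD}$.

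For the action, $\sigma^{-1}=\varepsilon\otimes\varepsilon-\eta$, and $\eta$ vanishes unless the total $y$-degree is $n$. Hence on group-likes $h\in G$ we get $h\cdot_{\sigma^{-1}}v_0=h\cdot v_0$, so $x$ acts by $\a$ and $g$ by $\b$.

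Finally we must pull these data back along $\psi$ of Lemma~\ref{lem:bosination}, because $L(\lambda)$ is described through the bosonization $\B(V_0)\biproduct\k G$. Since $\psi(1\biproduct g)=g^{-1}$ and $\psi(1\biproduct x)=x$, the element $g$ of $\k G$ acts by $\b^{-1}$ and $x$ acts by $\a$. The coaction $x^rg^i$ becomes $x^rg^{-i}$. This gives exactly $V_1$.

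The delicate point is checking that the lowest-degree (generating) component of $L(\lambda)$ really is the line spanned by the standard element. One must also keep track of the $\psi$-reindexing, which is the source of the inverses $\b^{-1}$ and $g^{-i}$. Here I would compare with \cite[Section 3.3]{AHS10}: there $\lambda$ is the subspace of $L(\lambda)$ of elements $u$ with $\delta(u)\in\k G\otimes L(\lambda)$. This subspace is one-dimensional, and by Corollary~\ref{cor:uniquesde} it is $\k v_0$. Combining the equivalences gives the statement.
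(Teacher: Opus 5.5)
Your proof is correct. It uses the same ingredients as the paper: the cocycle equivalence, Proposition~\ref{pro:todiagonal}, and transport of the group-like data through $\sigma$ and $\psi$. The difference is that you run the identification in the opposite direction.

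The paper starts from $V_1$ and forms $L(V_1)$. It checks that $(L(V_1))_\sigma$ contains a standard element of type $(\a,\b,x^rg^i)$. It then concludes $(L(V_1))_\sigma\cong V(\a,\b,x^rg^i)$ by an analogue of Corollary~\ref{cor:sdedetermineV} over $A_1$. That route needs only simplicity of $L(V_1)$, but it tacitly redoes the standard-basis theory of Section~\ref{section3} for $A_1$-modules.

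You instead start from $V$ and untwist by $\sigma^{-1}$. You then use the surjectivity in Proposition~\ref{pro:todiagonal}(1) to write $V_{\sigma^{-1}}\cong L(\lambda)$. Finally you pin down $\lambda$ as the line of elements whose coaction lies in $\k G\otimes L(\lambda)$. Since twisting leaves coactions unchanged, the one-dimensionality of that line can be read off from the known coaction of $V$. This is the computation behind Corollary~\ref{cor:uniquesde}: $c_\b^{r,i}(k,0)\neq 0$ for $k\le p$, together with linear independence of the $y^kx^rg^{i-k}$. So your route needs no classification over $A_1$. The price is the surjectivity statement, plus the observation that $\lambda\subseteq L(\lambda)$ coacts through $1\biproduct\k G$, which is immediate from the smash coproduct.

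Two small corrections. First, $A_1$ is not a quotient of $H$, since it lacks the relation $x^w=g^n$. The reduction from $H$ to $A_1$ should therefore say that the braiding on $V$ is given by identical formulas over $H$ and over $A_1$ (Lemma~\ref{lem:relization}). Second, you do not need a literal description of $\lambda$ from the literature. The inclusion of $\lambda$ in the group-like-coaction subspace, together with that subspace being one-dimensional, already forces equality.
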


\begin{proof}
By Theorem \ref{thm:cocycle-equivalence} and Proposition \ref{pro:todiagonal}, it suffices to establish an isomorphism $(L(V_1))_{\sigma} \simeq V(\a,\b,x^r g^i)$.
	
Recall that $V_1 = \k\{v\}$. We examine the action and coaction on the element $v$ when regarded as an element of $L(V_1) \subset \mathcal{K}$. As an object in $\YD{\B(V_0) \biproduct \k G}$, a straightforward calculation shows that:
\[
(1 \# g )\cdot v = \b^{-1} v, \quad (1 \# x) \cdot v = \a v, \quad \text{and} \quad \delta(v) = (1 \# x^r g^{-i}) \otimes v.
\]
Identifying $A_0$ with $\B(V_0) \biproduct \k G$ via the isomorphism $\psi$ from Lemma \ref{lem:bosination}, the action and coaction on $v$ as an element of a Yetter-Drinfeld module over $A_0$ become:
\[
g\cdot v = \b v, \quad x \cdot v = \a v, \quad \text{and} \quad \delta(v) = x^r g^i \otimes v.
\]
Since $L(V_1)$ is simple, the twisted module $(L(V_1))_{\sigma}$ remains simple over $A_1$. We verify the twisted structure on the element $v$:
\[
g\cdot_{\sigma} v = \b v, \quad x \cdot_{\sigma} v = \a v, \quad \text{and} \quad \delta(v) = x^r g^i \otimes v.
\]
This shows that $(L(V_1))_{\sigma}$ contains a standard element of type $(\a, \b, x^r g^i)$. By an argument analogous to Corollary \ref{cor:sdedetermineV}, the isomorphism class of such a simple module is uniquely determined by this standard element. Therefore, $(L(V_1))_{\sigma} \simeq V(\a,\b,x^r g^i)$.
\end{proof}

Observe that $W(\a,\b,x^rg^i)$ is a braided vector space of diagonal type with respect to the basis $\{y, v\}$. Consequently, the finite dimensionality of its Nichols algebra can be determined using Heckenberger's classification \cite{H08,H09}, which establishes a correspondence between finite-dimensional Nichols algebras of diagonal type and certain generalized Dynkin diagrams. We now proceed to apply this theory.

The braiding $c$ of $W(\a,\b,x^rg^i)$ on the basis elements is given by:
\begin{equation}
	\begin{array}{lclcl}
		c(y\otimes y) &=& \gamma y\otimes y, & \quad & c(y\otimes v) = \b^{-1} v \otimes y, \\
		c(v\otimes y) &=& \gamma^{-i} y \otimes v, & \quad & c(v\otimes v) = \a^r \b^i v\otimes v.
	\end{array}
\end{equation}
The corresponding braiding matrix $\mathbf{q} = (q_{ij})_{2\times 2}$ is
\begin{equation}
	\mathbf{q} = \left(\begin{array}{cc}
		\gamma & \b^{-1} \\
		\gamma^{-i} & \a^r \b^i
	\end{array}\right).
\end{equation}
The generalized Dynkin diagram $D(\a,\b,x^rg^i)$ is therefore given by:
\begin{equation}\label{Dynkindiagram}
	\begin{array}{c}
		\begin{tikzpicture}[baseline=(current bounding box.center), scale=1.2]
			\node[circle, draw, fill=white, inner sep=2pt, label=above:$\gamma$] (A) at (0,0) {};
			\node[circle, draw, fill=white, inner sep=2pt, label=above:$\a^r \b^i$] (B) at (2,0) {};
		\end{tikzpicture}
		\qquad \text{or} \qquad
		\begin{tikzpicture}[baseline=(current bounding box.center), scale=1.2]
			\node[circle, draw, fill=white, inner sep=2pt, label=above:$\gamma$] (A) at (0,0) {};
			\node[circle, draw, fill=white, inner sep=2pt, label=above:$\a^r \b^i$] (B) at (2,0) {};
			\draw (A) -- node[above] {$\b^{-1} \gamma^{-i}$} (B);
		\end{tikzpicture}
	\end{array}
\end{equation}
depending on whether the product $q_{12}q_{21} = \gamma^{-i}\b^{-1}$ is equal to $1$ or not.

\begin{remark}
\normalfont
Note that if $\a^r = 1$ and $\b = \gamma^j$, then the generalized Dynkin diagram $D(\a, \b, x^r g^i)$ coincides with the diagram $D_{i,j}$ given in \cite[Section 5.2]{ZLY25}.
\end{remark}

The generalized Dynkin diagrams corresponding to arithmetic root systems of
rank 2 are listed in \cite[Table 1]{H08}. These diagrams depend on fixed parameters $ q,\zeta \in \k^* $.  Following the notation in \cite{MBG21}, we refer to these as Heckenberger diagrams. Each diagram is indexed as $H_{k,l}$, where $k$ denotes the row number and $l$ denotes the position within that row from left to right.
For example, $ H_{4,1} $ denotes the diagram 
\begin{tikzpicture}[
	scale=0.7, baseline=-3pt,
	every node/.style={     
		circle,
		draw=black,
		fill=white,
		inner sep=1pt,
		minimum size=4pt   
	},
	label distance=0.5mm
	]
	\node[label=above:{\scriptsize $q$}] (A) at (0,0) {};
	\node[label=above:{\scriptsize $q^{2}$}] (B) at (1.5,0) {};

	\draw (A) -- node[draw=none, fill=none, rectangle, above, scale=0.8] {$q^{-2}$} (B);
\end{tikzpicture}
, with $ q\in \k^* \textbackslash \{-1,1\} $. The set of indices of Heckenberger
diagrams is the following:
\begin{eqnarray*}
	\mathcal{I} \ := & \{& (1,1),(2,1),(3,1),(3,2),(4,1),(5,1),(5,2),(6,1),(6,2),(7,1),(7,2), \\   
	&&(8,1), (8,2),(8,3),(8,4),(8,5),(9,1),(9,2),(9,3),(10,1), (10,2),(10,3),     \\
	&& (11,1),(12,1),(12,2),(12,3),(13,1),(13,2),(13,3),(13,4),(14,1),(14,2),   \\
	&& (15,1),(15,2),(15,3),(15,4),(16,1),(16,2),(16,3),(16,4), (17,1),(17,2) \  \} .
\end{eqnarray*}

By invoking Corollary \ref{cor:todiagonal} and \cite[Corollary 6]{H08}, we can determine whether $\B(V(\a,\b,x^rg^i))$ is finite-dimensional via $D(\a,\b,x^rg^i)$, as formalized in the following lemma.

\begin{lemma}\label{lem:WandD}
Let $n \geq 2$ and $(\a, \b, x^r g^i) \in \mathcal{T}_1$.
The Nichols algebra $\B(V(\a,\b,x^rg^i))$ is finite-dimensional if and only if
$D(\a,\b,x^rg^i) = H_{k,l}$ for some $(k,l) \in \mathcal{I}$ and $\a^r \b^i \neq 1$.
\end{lemma}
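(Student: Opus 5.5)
The plan is to chain Corollary \ref{cor:todiagonal} with Heckenberger's classification of rank two Nichols algebras of diagonal type. First, by Corollary \ref{cor:todiagonal}, $\B(V(\a,\b,x^rg^i))$ is finite-dimensional if and only if $\B(W(\a,\b,x^rg^i))$ is. Here $W=W(\a,\b,x^rg^i)=V_0\oplus V_1$ is a two-dimensional braided vector space of diagonal type. With respect to the basis $\{y,v\}$ its braiding matrix is the matrix $\mathbf{q}$ computed above. All entries of $\mathbf{q}$ are roots of unity, since $(\a,\b,x^rg^i)\in\mathcal{T}_1$ and $\gamma\in R_n$. So the problem becomes deciding finite-dimensionality of a rank two diagonal Nichols algebra, and this is governed only by the generalized Dynkin diagram $D(\a,\b,x^rg^i)$ in \eqref{Dynkindiagram}.

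Next I would invoke \cite[Corollary 6]{H08}, together with the list of arithmetic root systems of rank two in \cite[Table 1]{H08}. That result states that a rank two Nichols algebra of diagonal type is finite-dimensional if and only if its generalized Dynkin diagram appears among the Heckenberger diagrams $H_{k,l}$, $(k,l)\in\mathcal{I}$, for suitable admissible parameters $q,\zeta$. To apply this cleanly I need to separate the degenerate case where a vertex label equals $1$. Heckenberger's table presupposes that the diagonal entries $q_{11},q_{22}$ are different from $1$, because a vertex label $1$ generates a polynomial subalgebra. The label $q_{11}=\gamma$ is automatically $\neq1$ since $n\ge2$. For the second vertex there are two cases.

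\begin{itemize}
\item If $q_{22}=\a^r\b^i=1$, then $v$ spans a braided subspace $\k v$ with $c(v\otimes v)=v\otimes v$. Its Nichols algebra $\k[v]$ is infinite-dimensional by Remark \ref{rmk:exampledim1}. Since $\B(\k v)$ embeds in $\B(W)$, the algebra $\B(W)$ is infinite-dimensional as well.
\item If $\a^r\b^i\neq1$, both vertex labels are nontrivial roots of unity. Then Heckenberger's criterion applies directly, and $\B(W)$ is finite-dimensional exactly when $D(\a,\b,x^rg^i)$ coincides with some $H_{k,l}$.
\end{itemize}

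Combining the two cases gives the stated equivalence. In the disconnected case $q_{12}q_{21}=1$, the criterion is consistent with $H_{1,1}$ (type $A_1\times A_1$), whose vertex labels are arbitrary nontrivial values; so the condition $\a^r\b^i\ne1$ is exactly what is needed there too.

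The only point I expect to require care is matching conventions. I have to make sure that "$D=H_{k,l}$" is read with the parameters $q,\zeta$ ranging over the values allowed in \cite[Table 1]{H08}, and up to swapping the two vertices. I also need the embedding $\B(\k v)\hookrightarrow\B(W)$ for the degenerate case; I would justify it by the standard fact that the Nichols algebra of a braided subspace of diagonal type sits inside $\B(W)$ as the subalgebra generated by that subspace.
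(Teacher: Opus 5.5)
Your proposal is correct and follows the paper's route: Corollary~\ref{cor:todiagonal} reduces the question to the rank-two diagonal braided space $W(\a,\b,x^rg^i)$, and \cite[Corollary 6]{H08} then decides finite-dimensionality via $D(\a,\b,x^rg^i)$. Your separate treatment of the case $\a^r\b^i=1$, using $\B(\k v)\hookrightarrow\B(W)$, makes explicit where the extra condition comes from; one correction is that Heckenberger's table does not exclude the label $1$, since $H_{1,1}$ allows arbitrary vertex labels, which is exactly why $\a^r\b^i\neq 1$ has to be imposed separately.
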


\begin{remark}\hfill
\normalfont
	\begin{itemize}
		\item [(1)] In fact, the condition $\a^r \b^i \neq 1$ is automatically satisfied whenever $(k,l) \neq (1,1)$. In such cases, $\B(V(\a,\b,x^rg^i))$ is finite-dimensional if and only if $D(\a,\b,x^rg^i) = H_{k,l}$.
		\item [(2)] Recall that we initially fixed the Hopf algebra $H = B(n, w, \gamma)$ at the end of Section \ref{section2.5}, and our preceding discussions have been based on this convention. However, to classify all finite-dimensional Nichols algebras over such Hopf algebras, we must also consider the structural parameters $n$, $w$, and $\gamma$. To this end, the notation $V(n, w, \gamma, \a, \b, x^r g^i)$ is introduced here to explicitly denote the Yetter-Drinfeld module $V(\a, \b, x^r g^i)$ over the Hopf algebra $B(n, w, \gamma)$, as constructed in Section \ref{subsection4.1}. Correspondingly, $D(n, w, \gamma, \a, \b, x^r g^i)$ represents the associated generalized Dynkin diagram, as introduced below.
	\end{itemize}
\end{remark}

Recall that for any integer $m \geq 1$, $R_m$ denotes the set of primitive $m$-th roots of unity, and $R_\infty$ denotes the group of all roots of unity. Our next task is to determine all 6-tuples $(n, w, \gamma, \a, \b, x^r g^i)$ such that the associated diagram $D(n, w, \gamma, \a, \b, x^r g^i)$ coincides with a Heckenberger diagram. Specifically, we consider the 6-tuples satisfying the following fundamental conditions: $n \geq 2$ and $w \geq 1$ are integers, $\gamma \in R_n$, $r, i \in \Z$, and $\a, \b \in R_\infty$ with the relation $\a^w = \b^n$.

Our classification strategy proceeds as follows: First, we determine $\gamma$ using the label of a vertex, which consequently determines the integer $n$. Second, for any chosen integer $i \in \Z$, the scalar $\b$ is uniquely determined by the label of the edge. Finally, we characterize the remaining parameters $\a$, $r$, and $w$. Note that we do not assign specific values to $\a$, $r$, and $w$; instead, we establish the algebraic equations that they must satisfy.

We now apply this strategy to the specific Heckenberger diagrams $H_{4,1}, H_{6,2}, H_{10,1}, H_{11,1},$ and $H_{16,1}$. We examine each case individually.

\subsubsection*{The diagram $H_{4,1}$}
The diagram $H_{4,1}$ is given by:
\begin{equation*}
	\begin{tikzpicture}[baseline=(current bounding box.center), scale=1.2]
		\node[circle, draw, fill=white, inner sep=2pt, label=above:$q$] (A) at (0,0) {};
		\node[circle, draw, fill=white, inner sep=2pt, label=above:$q^2$] (B) at (2,0) {};
		\draw (A) -- node[above] {$q^{-2}$} (B);
	\end{tikzpicture}
	\qquad \text{with } q \in \k^* \setminus \{-1, 1\}.
\end{equation*}
The vertices are labeled by distinct scalars $q$ and $q^2$. Consequently, there are two possible identifications for $\gamma$.

\textbf{Case 1:} $\gamma = q$.
Since $q \in \k^* \setminus \{-1, 1\}$, we have $\gamma \neq \pm 1$, which implies $n \geq 3$.
For any chosen integer $i \in \Z$, the condition on the edge label $\b^{-1} \gamma^{-i} = q^{-2}$ becomes $\b^{-1} \gamma^{-i} = \gamma^{-2}$, which yields
\[
\b = \gamma^{2-i}.
\]
With $n, i$, and $\b$ determined, we consider the remaining parameters $\a, w, r$.
Substituting $\b = \gamma^{2-i}$ into the relations $\a^w = \b^n$ and $\a^r \b^i = q^2 = \gamma^2$, we obtain the following conditions:
\[
\a^w = \b^n = 1 \quad \text{and} \quad \a^r = \gamma^{i^2 - 2i + 2}.
\]

\textbf{Case 2:} $\gamma = q^2$.
In this setting, $n \geq 2$. Similarly, for any chosen $i \in \Z$, the edge label condition $\b^{-1} \gamma^{-i} = q^{-2}$ becomes $\b^{-1} \gamma^{-i} = \gamma^{-1}$, which implies
\[
\b = \gamma^{1-i}.
\]
Finally, we determine $\a, w, r$. The relation $\a^r \b^i = q$ implies $(\a^r \b^i)^2 = q^2 = \gamma$. Substituting $\b = \gamma^{1-i}$ into the relations $\a^w = \b^n$ and $\a^{2r} \b^{2i} = \gamma$, we arrive at:
\[
\a^w = \b^n = 1 \quad \text{and} \quad \a^{2r} = \gamma^{2i^2 - 2i + 1}.
\]

\subsubsection*{The diagram $H_{6,2}$}
The diagram $H_{6,2}$ is given by:
\begin{equation*}
	\begin{tikzpicture}[baseline=(current bounding box.center), scale=1.2]
		\node[circle, draw, fill=white, inner sep=2pt, label=above:$\zeta$] (A) at (0,0) {};
		\node[circle, draw, fill=white, inner sep=2pt, label=above:$\zeta q^{-1}$] (B) at (2,0) {};
		\draw (A) -- node[above] {$\zeta^{-1}q$} (B);
	\end{tikzpicture}
	\qquad \text{with } \zeta \in R_3, \ q \in \k^* \setminus \{1, \zeta, \zeta^2\}.
\end{equation*}
Obviously, $\zeta \neq \zeta q^{-1}$ since $q \neq 1$. Thus, the vertices are distinct, and there are two possible identifications for $\gamma$.

\textbf{Case 1:} $\gamma = \zeta$.
Since $\gamma \in R_3$, we immediately have $n=3$.
For any chosen integer $i \in \Z$, the condition on the edge label is $\b^{-1} \gamma^{-i} = \gamma^{-1}q$.
Since $\b, \gamma$ are roots of unity, $q$ must also be a root of unity. Given the constraint $q \in \k^* \setminus \{1, \zeta, \zeta^2\}$, we infer that $q \in R_m$ where $m=2$ or $m \geq 4$. This is equivalent to requiring $\b \in R_m$ where $m=2$ or $m \geq 4$.
Finally, we determine the relations for $\a$. Substituting the expression for $\b$ into the vertex equation, we obtain the following classification for the parameters:
\[
\a^w = \b^3 \quad \text{and} \quad \a^r = \b^{1-i} \gamma^i.
\]

\textbf{Case 2:} $\gamma = \zeta q^{-1}$.
In this setting, we have $n=2$ or $n \geq 4$.
Similarly, for any chosen $i \in \Z$, the edge label condition $\b^{-1} \gamma^{-i} = \zeta^{-1}q = \gamma^{-1}$ implies
\[
\b = \gamma^{1-i}.
\]
Finally, using the label of the first vertex $\a^r \b^i = \zeta$, and substituting $\b = \gamma^{1-i}$, we deduce that $\a^r \gamma^{i-i^2} = \zeta$. Thus, the parameters satisfy:
\[
\a^w = 1 \quad \text{and} \quad \a^r \gamma^{i-i^2} \in R_3.
\]

\subsubsection*{The diagram $H_{10,1}$}
The diagram $H_{10,1}$ is given by:
\begin{equation*}
	\begin{tikzpicture}[baseline=(current bounding box.center), scale=1.2]
		\node[circle, draw, fill=white, inner sep=2pt, label=above:$-\zeta$] (A) at (0,0) {};
		\node[circle, draw, fill=white, inner sep=2pt, label=above:$\zeta^3$] (B) at (2,0) {};
		\draw (A) -- node[above] {$\zeta^{-2}$} (B);
	\end{tikzpicture}
	\qquad \text{with } \zeta \in R_9.
\end{equation*}
Obviously, the vertices are distinct because $\zeta \in R_9$. Thus, there are two cases.

\textbf{Case 1:} $\gamma = -\zeta$.
Obviously, $n=18$.
For any chosen integer $i \in \Z$, the edge label condition is $\b^{-1} \gamma^{-i} = \zeta^{-2}$.
Since $\gamma = -\zeta$, we have $\zeta^{-2} = (-\gamma)^{-2} = \gamma^{-2}$. Thus, the condition simplifies to $\b^{-1} \gamma^{-i} = \gamma^{-2}$, which yields
\[
\b = \gamma^{2-i}.
\]
We now determine $\a$. Using the label of the second vertex $\a^r \b^i = \zeta^3$, and observing that $\zeta^3 = (-\gamma)^3 = -\gamma^3 = \gamma^{12}$ (since $\gamma^9 = -1$), we substitute $\b = \gamma^{2-i}$ to get:
\[
\a^w = 1 \quad \text{and} \quad \a^r = \gamma^{i^2-2i+12}.
\]

\textbf{Case 2:} $\gamma = \zeta^3$.
Since $\zeta \in R_9$, we have $\gamma \in R_3$, so $n=3$.
Similarly, for any chosen $i \in \Z$, the edge label condition $\b^{-1} \gamma^{-i} = \zeta^{-2}$ implies
\[
\b = \zeta^{2-3i}.
\]
Finally, using the label of the first vertex $\a^r \b^i = -\zeta$, we substitute $\b$ to find $\a$.
Note that $\a^w = \b^3 = (\zeta^{2-3i})^3 = \zeta^{6-9i} = \zeta^6$.
Thus, the parameters are classified as:
\[
\a^w = \zeta^6 \quad \text{and} \quad \a^r = -\zeta^{3i^2-2i+1}.
\]

\subsubsection*{The diagram $H_{11,1}$}
The diagram $H_{11,1}$ is given by:
\begin{equation*}
	\begin{tikzpicture}[baseline=(current bounding box.center), scale=1.2]
		\node[circle, draw, fill=white, inner sep=2pt, label=above:$q$] (A) at (0,0) {};
		\node[circle, draw, fill=white, inner sep=2pt, label=above:$q^3$] (B) at (2,0) {};
		\draw (A) -- node[above] {$q^{-3}$} (B);
	\end{tikzpicture}
	\qquad \text{with } q \in \k^* \setminus \{-1, 1\}, \ q \notin R_3.
\end{equation*}
Obviously, the vertices are distinct. Thus, there are two cases.

\textbf{Case 1:} $\gamma = q$.
Since $q \notin R_3$ and $q \neq \pm 1$, we have $n \geq 4$.
For any chosen integer $i \in \Z$, the edge label condition implies
\[
\b = \gamma^{3-i}.
\]
Using the label of the second vertex, we obtain the classification:
\[
\a^w = 1 \quad \text{and} \quad \a^r = \gamma^{i^2-3i+3}.
\]

\textbf{Case 2:} $\gamma = q^3$.
In this setting, $n \geq 2$. Similarly, for any chosen $i \in \Z$, the edge label condition implies
\[
\b = \gamma^{1-i}.
\]
Using the label of the first vertex $\a^r \b^i = q$, which implies $(\a^r \b^i)^3 = \gamma$, we arrive at:
\[
\a^w = 1 \quad \text{and} \quad \a^{3r} = \gamma^{3i^2 - 3i + 1}.
\]

\subsubsection*{The diagram $H_{16,1}$}
The diagram $H_{16,1}$ is given by:
\begin{equation*}
	\begin{tikzpicture}[baseline=(current bounding box.center), scale=1.2]
		\node[circle, draw, fill=white, inner sep=2pt, label=above:$-\zeta$] (A) at (0,0) {};
		\node[circle, draw, fill=white, inner sep=2pt, label=above:$\zeta^5$] (B) at (2,0) {};
		\draw (A) -- node[above] {$-\zeta^{-3}$} (B);
	\end{tikzpicture}
	\qquad \text{with } \zeta \in R_{15}.
\end{equation*}
Obviously, the vertices are distinct because $\zeta \in R_{15}$. Thus, there are two cases.

\textbf{Case 1:} $\gamma = -\zeta$.
Obviously, $n=30$.
For any chosen integer $i \in \Z$, the edge label condition is $\b^{-1} \gamma^{-i} = -\zeta^{-3}$.
Since $\gamma = -\zeta$, we have $-\zeta^{-3} = -(-\gamma)^{-3} = -(-1)^{-3}\gamma^{-3} = \gamma^{-3}$. Thus, the condition simplifies to $\b^{-1} \gamma^{-i} = \gamma^{-3}$, which yields
\[
\b = \gamma^{3-i}.
\]
We now determine $\a$. Using the label of the second vertex $\a^r \b^i = \zeta^5$, and observing that $\zeta^5 = (-\gamma)^5 = -\gamma^5$, we substitute $\b = \gamma^{3-i}$ to get:
\[
\a^w = 1 \quad \text{and} \quad \a^r = -\gamma^{i^2-3i+5}.
\]

\textbf{Case 2:} $\gamma = \zeta^5$.
First, the structure of the diagram implies $\gamma = \zeta^5$ and the edge label condition is $\b^{-1} \gamma^{-i} = -\zeta^{-3}$, where $\zeta \in R_{15}$.
This is equivalent to the following  conditions:
\[
\gamma \in R_3 \quad \text{and} \quad \b^{-1} \gamma^{-i} \in R_{10}.
\]
One direction of this equivalence is trivial. Conversely, given $\gamma \in R_3$ and $\b^{-1} \gamma^{-i} \in R_{10}$, one can explicitly recover $\zeta$ by defining $\zeta = -\gamma^2 (\b^{-1} \gamma^{-i})^3$, which perfectly satisfies $\zeta \in R_{15}$, $\gamma = \zeta^5$, and $\b^{-1} \gamma^{-i} = -\zeta^{-3}$.

From $\gamma \in R_3$, we immediately have $n=3$. Finally, we express $\a$ purely in terms of $\gamma$ and $\b$. 
Observing from our construction that $\zeta = -\gamma^2 (\b^{-1} \gamma^{-i})^3 = -\b^{-3} \gamma^{2-3i}$, we substitute this into the first vertex label equation $\a^r \b^i = -\zeta$ to obtain:
\[
\a^w = \b^n = \b^3 \quad \text{and} \quad \a^r = \b^{-3-i} \gamma^{2-3i}.
\]

The analysis for the remaining Heckenberger diagrams proceeds similarly. We now summarize the complete classification. 

In the following lemma, we classify all 6-tuples $(n, w, \gamma, \a, \b, x^r g^i)$ such that the associated Nichols algebra $\B(V(n, w, \gamma, \a, \b, x^r g^i))$ is finite-dimensional. We organize this classification into Tables \ref{tab:dim1_classification} through \ref{tab:dim6_classification} based on the dimension of the underlying Yetter-Drinfeld module $V(n, w, \gamma, \a, \b, x^r g^i)$.

In these tables, the first column specifies the conditions on $n$ and $\gamma$ (typically, this restricts only the value of $n$, allowing $\gamma$ to be any primitive $n$-th root of unity). The second column outlines the conditions on $\b$ and $ i $, while the third column details the algebraic relations that $\a,w$ and $r$ must satisfy.
Whenever a 6-tuple $(n, w, \gamma, \a, \b, x^r g^i)$ simultaneously satisfies the conditions in these first three columns, the Nichols algebra $\B(V(n, w, \gamma, \a, \b, x^r g^i))$ is finite-dimensional. Furthermore, its corresponding diagram coincides with the Heckenberger diagram $H_{k,l}$ for any pair $(k,l)$ listed in the fourth column.

\begin{lemma}\label{lem:finiten-nichols-classification}
For all pairs $(k,l) \in \mathcal{I}$, there exists a 6-tuple $(n, w, \gamma, \a, \b, x^r g^i)$ satisfying $n, w \geq 1$, $\gamma \in R_n$, and $(\a, \b, x^r g^i) \in \mathcal{T}_1$ such that $D(n, w, \gamma, \a, \b, x^r g^i) = H_{k,l}$. Moreover, for any such 6-tuple, it holds that $\dim_{\k} \B(V(n, w, \gamma, \a, \b, x^r g^i)) < \infty$ if and only if $\a^r \b^i \neq 1$. The complete classification of 6-tuples $(n, w, \gamma, \a, \b, x^r g^i)$ yielding finite-dimensional Nichols algebras is provided in Tables \ref{tab:dim1_classification}--\ref{tab:dim6_classification}.
\end{lemma}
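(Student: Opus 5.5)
The plan has three parts: reduce to diagonal type via Lemma \ref{lem:WandD}, prove existence for each $(k,l)\in\mathcal{I}$ by an explicit realization, then run the case analysis that fills the tables.

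\textbf{Reduction.} By Lemma \ref{lem:WandD}, for $n\ge 2$ and $(\a,\b,x^rg^i)\in\mathcal{T}_1$, $\B(V(n,w,\gamma,\a,\b,x^rg^i))$ is finite-dimensional if and only if $D(n,w,\gamma,\a,\b,x^rg^i)$ equals some Heckenberger diagram $H_{k,l}$ and the second vertex label $\a^r\b^i$ differs from $1$. This already gives the ``moreover'' clause: once the diagram coincides with $H_{k,l}$, finite-dimensionality is equivalent to $\a^r\b^i\neq 1$. For $n=1$ the module is one-dimensional, and Remark \ref{rmk:exampledim1} gives the same criterion.

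\textbf{Existence.} For each $(k,l)\in\mathcal{I}$, I pick admissible labels $q,\zeta$ of $H_{k,l}$ that are roots of unity; every table in \cite[Table 1]{H08} allows this. I then realize the braiding matrix $\mathbf{q}=\begin{pmatrix}\gamma&\b^{-1}\\ \gamma^{-i}&\a^r\b^i\end{pmatrix}$, treating the two vertices asymmetrically.
\begin{itemize}
\item[(i)] Let $\gamma$ be the label of one vertex, and let $n$ be its multiplicative order. If that label is $1$, use the other vertex instead; if both labels are $1$ (the case $H_{1,1}$, with no edge), take $n=1$.
\item[(ii)] Take $i=0$ and $\b^{-1}$ equal to the edge label (or $\b=1$ when there is no edge). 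This makes $\b$ a root of unity.
\item[(iii)] The remaining equation is $\a^r\b^i=$ the other vertex label, together with $\a^w=\b^n$. Take $r=1$ and $\a$ equal to the required root of unity. Since $\a$ and $\b$ have finite orders, $w$ can be chosen as a suitable multiple of $\mathrm{ord}(\a)$ adjusted so that $\a^w=\b^n$.
\end{itemize}

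\textbf{Main obstacle.} The hard step is (iii): the equation $\a^w=\b^n$ need not be solvable once $\a$ is fixed. My first fix is to pass to a suitable power or root. Because $r$ is free, I can choose $\a$ as an appropriate root of $\b^n$ times an element whose $r$-th power corrects the vertex label. Concretely, write the vertex label as $\mu$ and set $\a=\mu\,\b^{-i}$. I then need $w$ with $(\mu\b^{-i})^w=\b^n$. If that fails, use the freedom in $r$ and $i$: choose $\a$ to be a primitive root of order $N$ divisible by the orders of everything involved, and solve two congruences for the exponents $r$ and $w$ modulo $N$.

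\textbf{Tables.} The complete classification runs the case analysis illustrated for $H_{4,1},H_{6,2},H_{10,1},H_{11,1},H_{16,1}$ over every diagram in $\mathcal{I}$. For each diagram:
\begin{itemize}
\item[(a)] identify $\gamma$ with each vertex label in turn, which fixes $n$;
\item[(b)] solve the edge equation $\b^{-1}\gamma^{-i}=$ edge label for $\b$ in terms of $i$;
\item[(c)] record the remaining constraints $\a^w=\b^n$ and $\a^r\b^i=$ the other vertex label as the third column.
\end{itemize}
Case (a) needs care when the vertex labels are expressed through an auxiliary parameter such as $\zeta$. Then, as in $H_{16,1}$, the condition has to be rewritten as membership conditions like $\b^{-1}\gamma^{-i}\in R_{10}$, by recovering $\zeta$ explicitly from $\gamma$ and the edge label. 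Sorting the resulting tuples by $\dim V=m+1$, with $m$ given by the formula in Section \ref{subsection4.1}, produces the tables.
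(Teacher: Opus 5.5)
Your proposal is correct and follows essentially the paper's route: the ``moreover'' clause is exactly Lemma~\ref{lem:WandD}, and the tables come from the same diagram-by-diagram analysis (identify $\gamma$ with a vertex label to fix $n$, solve the edge condition $\b^{-1}\gamma^{-i}=$ edge label, turn the other vertex label and the side conditions on $q,\zeta$ into constraints on $\a,w,r$, and sort by $\dim_{\k} V=m+1$). Your uniform existence construction ($i=0$, $\b^{-1}$ equal to the edge label, $\a$ a primitive $N$-th root of unity with $N$ divisible by the orders of the other vertex label $\mu$ and of $\b^n$, then $r$ and $w$ read off from $\mu=\a^r$ and $\b^n=\a^w$) is a tidy alternative to reading existence off the tables. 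Two pieces should be dropped: the first version of step (iii), which fails whenever $\b^n\notin\langle\mu\rangle$; and the $n=1$ fallback, which is unnecessary (choose a label $q\neq 1$ for $H_{1,1}$) and lies outside the range $n\geq 2$ where $D$ is defined.
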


% =========================================================
% Table 1: 对应 YD 模维数为 1 的情况 (d=1)
% =========================================================
\begingroup
\small
\setlength{\tabcolsep}{4pt}
\setlength{\LTleft}{-20cm plus 1fill}
\setlength{\LTright}{-20cm plus 1fill}
\renewcommand{\arraystretch}{1.8}
\begin{longtable}{ >{$}c<{$} | >{$}c<{$} | >{$}c<{$} | >{$}c<{$} }
	\caption[Classification of finite-dimensional Nichols algebras for dimension 1]{Classification of finite-dimensional Nichols algebras for $\dim_{\protect\k} V = 1$}
	\label{tab:dim1_classification} \\
	\hline
	\makebox[2.6cm]{$n,\gamma$} & \makebox[2.8cm]{$\b, i$} & \makebox[5.4cm]{$\a, w, r$} & \makebox[1.6cm]{$(k,l)$} \\
	\hline\hline
	\endfirsthead
	
	\caption[]{Classification of finite-dimensional Nichols algebras for $\dim_{\protect\k} V = 1$ (continued)} \\
	\hline
	n,\gamma & \b, i & \a, w, r & (k,l) \\
	\hline\hline
	\endhead
	
	\hline
	\endfoot
	
	\hline
	\endlastfoot
	
	n =1 & 
	\b \in R_\infty & 
	\a^w = \b^n, \quad \a^r \b^i \in R_\infty \setminus \{1\} & 
	\\
	\hline
	% H_{1,1} (Unlinked Case)
	n \geq 2 & 
	\b = \gamma^{-i} & 
	\a^w = 1, \quad \a^r \gamma^{-i^2} \in R_\infty \setminus \{1\} & 
	(1, 1) \\
\end{longtable}
\endgroup

% =========================================================
% Table 2: 对应 YD 模维数为 2 的情况 (d=2)
% =========================================================
\begingroup
\small
\setlength{\tabcolsep}{4pt}
\setlength{\LTleft}{-20cm plus 1fill}
\setlength{\LTright}{-20cm plus 1fill}
\renewcommand{\arraystretch}{1.8}
\begin{longtable}{ >{$}c<{$} | >{$}c<{$} | >{$}c<{$} | >{$}c<{$} }
	\caption[Classification of finite-dimensional Nichols algebras for dimension 2]{Classification of finite-dimensional Nichols algebras for $\dim_{\protect\k} V = 2$}
	\label{tab:dim2_classification} \\
	\hline
	\makebox[2.6cm]{$n,\gamma$} & \makebox[2.8cm]{$\b, i$} & \makebox[5.4cm]{$\a, w, r$} & \makebox[1.6cm]{$(k,l)$} \\
	\hline\hline
	\endfirsthead
	
	\caption[]{Classification of finite-dimensional Nichols algebras for $\dim_{\protect\k} V = 2$ (continued)} \\
	\hline
	n,\gamma & \b, i & \a, w, r & (k,l) \\
	\hline\hline
	\endhead
	
	\hline
	\endfoot
	
	\hline
	\endlastfoot
	
	% H_{2,1}
	n \geq 2 & \b = \gamma^{1-i} & \a^w = 1, \quad \a^r = \gamma^{i^2 - i + 1} & (2, 1) \\
	\hline
	% H_{3,1} Case 1
	n \geq 3 & \b = \gamma^{1-i} & \a^w = 1, \quad \a^r = -\gamma^{i^2 - i} & (3, 1) \\
	\hline
	% H_{3,1} Case 2
	\gamma = -1 & \b \notin \{-1, 1\} & \a^w = \b^2, \quad \a^r = (-1)^i \b^{1-i} & (3, 1) \\
	\hline
	% H_{3,2}
	\gamma = -1 & \b \notin \{-1, 1\} & \a^w = \b^2, \quad \a^r = -\b^{-i} & (3, 2) \\
	\hline
	% H_{4,1} Case 2
	n \geq 2 & \b = \gamma^{1-i} & \a^w = 1, \quad \a^{2r} = \gamma^{2i^2 - 2i + 1} & (4, 1) \\
	\hline
	% H_{5,1} Case 2
	\gamma = -1 & \b \notin \{-1, 1\} & \a^w = \b^2, \quad \a^{2r} = (-1)^i \b^{1-2i} & (5, 1),(5,2) \\
	\hline
	% H_{6,1} Case 2 
	n \geq 2, \ n \neq 3 & \b = \gamma^{1-i} & \a^w = 1, \quad \a^r \gamma^{i-i^2} \in R_3 & (6, 1),(6,2) \\
	\hline
	% H_{7,1} Case 2
	\gamma = -1 & (-1)^{i+1} \b \in R_3 & \a^w = \b^2, \quad \a^r = (-1)^{i+1} \b^{-1-i} & (7, 1), (7, 2) \\
	\hline
	% H_{8,2} Case 2
	\gamma = -1 & \b \in R_{12} & \a^w = \b^2, \quad \a^r = \b^{4-i} & (8, 2),(8,3) \\
	\hline
	% H_{8,4} Case 2
	\gamma = -1 & \b \in R_{12} & \a^w = \b^2, \quad \a^r = (-1)^{i} \b^{3-i} & (8, 4), (8,5) \\
	\hline
	% H_{9,2} Case 2
	\gamma = -1 & \b \in R_4 & \a^w = -1, \quad \a^r \b^i \in R_3 & (9, 2) \\
	\hline
	% H_{9,3} Case 2
	\gamma = -1 & \begin{array}{@{}c@{}} \b = (-1)^{i+1}\zeta^{-3} \\ (\text{where } \zeta \in R_{12}) \end{array} & \a^w = -1, \quad \a^r = -\zeta^{3i-1} & (9, 3) \\
	\hline
	% H_{10,2} Case 2
	\gamma = -1 & (-1)^i \b \in R_9 & \a^w = \b^2, \quad \a^r = (-1)^i \b^{3-i} & (10, 2) \\
	\hline
	% H_{10,3} Case 2
	\gamma = -1 & (-1)^i \b \in R_9 & \a^w = \b^2, \quad \a^r = -\b^{-i-2} & (10, 3) \\
	\hline
	% H_{11,1} Case 2 
	n \geq 2 & \b = \gamma^{1-i} & \a^w = 1, \quad \a^{3r} = \gamma^{3i^2 - 3i + 1} & (11, 1) \\
	\hline
	% H_{12,1} Case 2
	n = 8 & \b = \gamma^{1-i} & \a^w = 1, \quad \a^r = \gamma^{i^2-i-2} & (12, 1) \\
	\hline
	% H_{12,2} Case 2
	\gamma = -1 & \b \in R_8 & \a^w = \b^2, \quad \a^r = \b^{2-i} & (12, 2) \\
	\hline
	% H_{12,3} Case 2
	\gamma = -1 & \b \in R_8 & \a^w = \b^2, \quad \a^r = (-1)^{i} \b^{3-i} & (12, 3) \\
	\hline
	% H_{13,2} Case 2
	n = 24 & \b = \gamma^{1-i} & \a^w = 1, \quad \a^r = \gamma^{i^2-i-6} & (13, 2) \\
	\hline
	% H_{13,3} Case 2
	\gamma = -1 & \b \in R_{24} & \a^w = \b^2, \quad \a^r = \b^{8-i} & (13, 3) \\
	\hline
	% H_{13,4} Case 2
	\gamma = -1 & \b \in R_{24} & \a^w = \b^2, \quad \a^r = (-1)^i \b^{5-i} & (13, 4) \\
	\hline
	% H_{14,1} Case 2
	\gamma = -1 & (-1)^i \b \in R_5 & \a^w = \b^2, \quad \a^r = (-1)^i \b^{-i-3} & (14, 1) \\
	\hline
	% H_{14,2} Case 2
	\gamma = -1 & (-1)^i \b \in R_5 & \a^w = \b^2, \quad \a^r = (-1)^{i+1} \b^{-i-1} & (14, 2) \\
	\hline
	% H_{15,1} Case 2
	\gamma = -1 & \b \in R_{20} & \a^w = \b^2, \quad \a^r = (-1)^i \b^{7-i} & (15, 1),(15,2) \\
	\hline
	% H_{15,3} Case 2
	\gamma = -1 & \b \in R_{20} & \a^w = \b^2, \quad \a^r = \b^{4-i} & (15, 3),(15,4) \\
	\hline
	% H_{16,2} Case 2
	n = 30 & \b = \gamma^{1-i} & \a^w = 1, \quad \a^r = \gamma^{i^2-i-12} & (16, 2) \\
	\hline
	% H_{16,3} Case 2
	\gamma = -1 & (-1)^i \b \in R_{30} & \a^w = \b^2, \quad \a^r = \b^{10-i} & (16, 3) \\
	\hline
	% H_{16,4} Case 2
	\gamma = -1 & (-1)^i \b \in R_{30} & \a^w = \b^2, \quad \a^r = \b^{6-i} & (16, 4) \\
	\hline
	% H_{17,1} Case 2
	\gamma = -1 & (-1)^i \b \in R_{14} & \a^w = \b^2, \quad \a^r = (-1)^i \b^{5-i} & (17, 1) \\
	\hline
	% H_{17,2} Case 2
	\gamma = -1 & (-1)^i \b \in R_{14} & \a^w = \b^2, \quad \a^r = -\b^{10-i} & (17, 2) \\
\end{longtable}
\endgroup

% =========================================================
% Table 3: 对应 YD 模维数为 3 的情况 (d=3)
% =========================================================
\begingroup
\small
\setlength{\tabcolsep}{4pt}
\setlength{\LTleft}{-20cm plus 1fill}
\setlength{\LTright}{-20cm plus 1fill}
\renewcommand{\arraystretch}{1.8}
\begin{longtable}{ >{$}c<{$} | >{$}c<{$} | >{$}c<{$} | >{$}c<{$} }
	\caption[Classification of finite-dimensional Nichols algebras for dimension 3]{Classification of finite-dimensional Nichols algebras for $\dim_{\protect\k} V = 3$}
	\label{tab:dim3_classification} \\
	\hline
	\makebox[2.6cm]{$n,\gamma$} & \makebox[2.8cm]{$\b, i$} & \makebox[5.4cm]{$\a, w, r$} & \makebox[1.6cm]{$(k,l)$} \\
	\hline\hline
	\endfirsthead
	
	\caption[]{Classification of finite-dimensional Nichols algebras for $\dim_{\protect\k} V = 3$ (continued)} \\
	\hline
	n,\gamma & \b, i & \a, w, r & (k,l) \\
	\hline\hline
	\endhead
	
	\hline
	\endfoot
	
	\hline
	\endlastfoot
	
	% H_{4,1} Case 1
	n \geq 3 & \b = \gamma^{2-i} & \a^w = 1, \quad \a^r = \gamma^{i^2 - 2i + 2} & (4, 1) \\
	\hline
	% H_{5,1} Case 1
	n \geq 3, \ n \neq 4 & \b = \gamma^{2-i} & \a^w = 1, \quad \a^r = -\gamma^{i^2 - 2i} & (5, 1),(5,2) \\
	\hline
	% H_{6,1} Case 1 
	n = 3 & \b^3 \neq 1 & \a^w = \b^3, \quad \a^r = \b^{1-i} \gamma^i & (6, 1),(6,2) \\
	\hline
	% H_{7,1} Case 1
	n = 3 & \b = -\gamma^{-1-i} & \a^w = -1, \quad \a^r = (-1)^{i+1} \gamma^{i^2+i} & (7, 1), (7, 2) \\
	\hline
	% H_{8,1}  
	n = 3 & \b\gamma^{i} \in R_4 & \a^w = \b^3, \quad \a^r = \b^{-i} \gamma^{-1} & (8, 1) \\
	\hline
	% H_{8,2} Case 1
	\begin{array}{@{}c@{}} n = 3, \ \gamma = -\zeta^{-2} \\ (\text{where } \zeta \in R_{12}) \end{array} & \b = (-1)^i \zeta^{2i+1} & \a^w = \zeta^3, \quad \a^r = (-1)^{i+1} \zeta^{-2i^2-i} & (8, 2),(8,3) \\
	\hline
	% H_{9} series
	\begin{array}{@{}c@{}} n = 3, \ \gamma = -\zeta^2 \\ (\text{where } \zeta \in R_{12}) \end{array} & \b = (-1)^i \zeta^{-2i-1} & \a^w = \zeta^{-3}, \quad \a^r = (-1)^{i+1} \zeta^{2i^2+i+2} & (9, 1) \\
	\hline
	% H_{9,2} Case 1
	n = 3 & \b\gamma^{i} \in R_4 & \a^w = \b^3, \quad \a^r = -\b^{-i} & (9, 2) \\
	\hline
	% H_{10} series Case 1
	n = 18 & \b = \gamma^{2-i} & \a^w = 1, \quad \a^r = \gamma^{i^2-2i+12} & (10, 1) \\
	\hline
	% H_{10} series Case 2
	\begin{array}{@{}c@{}} n = 3, \ \gamma = \zeta^3 \\ (\text{where } \zeta \in R_9) \end{array} & \b = \zeta^{2-3i} & \a^w = \zeta^6, \quad \a^r = -\zeta^{3i^2-2i+1} & (10, 1) \\
	\hline
	% H_{10,2} Case 1
	\begin{array}{@{}c@{}} n = 3, \ \gamma = \zeta^3 \\ (\text{where } \zeta \in R_9) \end{array} & \b = \zeta^{-3i+1} & \a^w = \zeta^3, \quad \a^r = -\zeta^{3i^2-i} & (10, 2) \\
	\hline
	% H_{13,1} Case 2
	\begin{array}{@{}c@{}} n = 3, \ \gamma = -\zeta^{-4} \\ (\text{where } \zeta \in R_{24}) \end{array} & \b = (-1)^{i+1} \zeta^{4i+1} & \a^w = -\zeta^3, \quad \a^r = \zeta^{-4i^2-i+6} & (13, 1) \\
	\hline
	% H_{13,3} Case 1
	\begin{array}{@{}c@{}} n = 3, \ \gamma = -\zeta^{-4} \\ (\text{where } \zeta \in R_{24}) \end{array} & \b = (-1)^i \zeta^{4i-5} & \a^w = \zeta^9, \quad \a^r = (-1)^{i+1} \zeta^{-4i^2+5i} & (13, 3) \\
	\hline
	% H_{16,1} Case 2
	n=3 & \b \gamma^{i} \in R_{10} & \a^w = \b^3, \quad \a^r = \b^{-3-i} \gamma^{2-3i} & (16, 1) \\
	\hline
	% H_{16,3} Case 1
	\begin{array}{@{}c@{}} n = 3, \ \gamma = \zeta^5 \\ (\text{where } \zeta \in R_{15}) \end{array} & \b = -\zeta^{-5i+2} & \a^w = -\zeta^6, \quad \a^r = (-1)^{i+1} \zeta^{5i^2-2i} & (16, 3) \\
\end{longtable}
\endgroup

% =========================================================
% Table 4: 对应 YD 模维数为 4 的情况 (d=4)
% =========================================================
\begingroup
\small
\setlength{\tabcolsep}{4pt}
\setlength{\LTleft}{-20cm plus 1fill}
\setlength{\LTright}{-20cm plus 1fill}
\renewcommand{\arraystretch}{1.8}
\begin{longtable}{ >{$}c<{$} | >{$}c<{$} | >{$}c<{$} | >{$}c<{$} }
	\caption[Classification of finite-dimensional Nichols algebras for dimension 4]{Classification of finite-dimensional Nichols algebras for $\dim_{\protect\k} V = 4$}
	\label{tab:dim4_classification} \\
	\hline
	\makebox[2.6cm]{$n,\gamma$} & \makebox[2.8cm]{$\b, i$} & \makebox[5.4cm]{$\a, w, r$} & \makebox[1.6cm]{$(k,l)$} \\
	\hline\hline
	\endfirsthead
	
	\caption[]{Classification of finite-dimensional Nichols algebras for $\dim_{\protect\k} V = 4$ (continued)} \\
	\hline
	n,\gamma & \b, i & \a, w, r & (k,l) \\
	\hline\hline
	\endhead
	
	\hline
	\endfoot
	
	\hline
	\endlastfoot
	
	% H_{8,4} Case 1
	\begin{array}{@{}c@{}} n = 4, \ \gamma = -\zeta^3 \\ (\text{where } \zeta \in R_{12}) \end{array} & \b = (-1)^i \zeta^{-3i-1} & \a^w = \zeta^8, \quad \a^r = (-1)^{i+1} \zeta^{3i^2+i} & (8, 4),(8,5) \\
	\hline
	% H_{9,3} Case 1
	n = 12 & \b = \gamma^{3-i} & \a^w = 1, \quad \a^r = -\gamma^{i^2-3i} & (9, 3) \\
	\hline
	% H_{11,1} Case 1
	n \geq 4 & \b = \gamma^{3-i} & \a^w = 1, \quad \a^r = \gamma^{i^2-3i+3} & (11, 1) \\
	\hline
	% H_{12,1} Case 1
	\begin{array}{@{}c@{}} n = 4, \ \gamma = \zeta^2 \\ (\text{where } \zeta \in R_8) \end{array} & \b = \zeta^{-2i-1} & \a^w = -1, \quad \a^r = \zeta^{2i^2+i-1} & (12, 1) \\
	\hline
	% H_{12,2} Case 1
	\begin{array}{@{}c@{}} n = 4, \ \gamma = \zeta^2 \\ (\text{where } \zeta \in R_8) \end{array} & \b = -\zeta^{-2i+1} & \a^w = -1, \quad \a^r = (-1)^{i+1} \zeta^{2i^2-i} & (12, 2) \\
	\hline
	% H_{12,3} Case 1
	n = 8 & \b = \gamma^{3-i} & \a^w = 1, \quad \a^r = -\gamma^{i^2-3i} & (12, 3) \\
	\hline
	% H_{13,1} Case 1
	\begin{array}{@{}c@{}} n = 4, \ \gamma = \zeta^6 \\ (\text{where } \zeta \in R_{24}) \end{array} & \b = -\zeta^{-6i+1} & \a^w = \zeta^4, \quad \a^r = (-1)^{i+1} \zeta^{6i^2-i-4} & (13, 1) \\
	\hline
	% H_{13,2} Case 1
	\begin{array}{@{}c@{}} n = 4, \ \gamma = \zeta^6 \\ (\text{where } \zeta \in R_{24}) \end{array} & \b = \zeta^{-6i-1} & \a^w = \zeta^{-4}, \quad \a^r = \zeta^{6i^2+i-1} & (13, 2) \\
	\hline
	% H_{14,1} Case 1
	n = 5 & \b = \gamma^{3-i} & \a^w = 1, \quad \a^r = -\gamma^{i^2+2i} & (14, 1) \\
	\hline
	% H_{15,1} Case 1
	n = 20 & \b = \gamma^{3-i} & \a^w = 1, \quad \a^r = -\gamma^{i^2-3i} & (15, 1),(15,2) \\
	\hline
	% H_{16,1} Case 1
	n=30 & \b = \gamma^{3-i} & \a^w = 1, \quad \a^r = -\gamma^{i^2-3i+5} & (16, 1) \\
	\hline
	% H_{17,1} Case 1
	n = 14 & \b = \gamma^{3-i} & \a^w = 1, \quad \a^r = -\gamma^{i^2-3i} & (17, 1) \\
\end{longtable}
\endgroup

% =========================================================
% Table 5: 对应 YD 模维数为 5 的情况 (d=5)
% =========================================================
\begingroup
\small
\setlength{\tabcolsep}{4pt}
\setlength{\LTleft}{-20cm plus 1fill}
\setlength{\LTright}{-20cm plus 1fill}
\renewcommand{\arraystretch}{1.8}
\begin{longtable}{ >{$}c<{$} | >{$}c<{$} | >{$}c<{$} | >{$}c<{$} }
	\caption[Classification of finite-dimensional Nichols algebras for dimension 5]{Classification of finite-dimensional Nichols algebras for $\dim_{\protect\k} V = 5$}
	\label{tab:dim5_classification} \\
	\hline
	\makebox[2.6cm]{$n,\gamma$} & \makebox[2.8cm]{$\b, i$} & \makebox[5.4cm]{$\a, w, r$} & \makebox[1.6cm]{$(k,l)$} \\
	\hline\hline
	\endfirsthead
	
	\caption[]{Classification of finite-dimensional Nichols algebras for $\dim_{\protect\k} V = 5$ (continued)} \\
	\hline
	n,\gamma & \b, i & \a, w, r & (k,l) \\
	\hline\hline
	\endhead
	
	\hline
	\endfoot
	
	\hline
	\endlastfoot
	
	% H_{10,3} Case 1
	n = 18 & \b = \gamma^{4-i} & \a^w = 1, \quad \a^r = -\gamma^{i^2-4i} & (10, 3) \\
	\hline
	% H_{14,2} Case 1
	n = 10 & \b = \gamma^{4-i} & \a^w = 1, \quad \a^r = -\gamma^{i^2-4i} & (14, 2) \\
	\hline
	% H_{15,3} Case 1
	\begin{array}{@{}c@{}} n = 5, \ \gamma = -\zeta^{-2} \\ (\text{where } \zeta \in R_{20}) \end{array} & \b = (-1)^i \zeta^{2i-3} & \a^w = \zeta^5, \quad \a^r = (-1)^{i+1} \zeta^{-2i^2+3i} & (15, 3),(15,4) \\
	\hline
	% H_{16,2} Case 1
	\begin{array}{@{}c@{}} n = 5, \ \gamma = \zeta^3 \\ (\text{where } \zeta \in R_{15}) \end{array} & \b = -\zeta^{-3i-4} & \a^w = -\zeta^{-5}, \quad \a^r = (-1)^{i+1} \zeta^{3i^2+4i-4} & (16, 2) \\
	\hline
	% H_{16,4} Case 1
	\begin{array}{@{}c@{}} n = 5, \ \gamma = \zeta^3 \\ (\text{where } \zeta \in R_{15}) \end{array} & \b = -\zeta^{-3i-2} & \a^w = -\zeta^5, \quad \a^r = (-1)^{i+1} \zeta^{3i^2+2i} & (16, 4) \\
\end{longtable}
\endgroup

% =========================================================
% Table 6: 对应 YD 模维数为 6 的情况 (d=6)
% =========================================================
\begingroup
\small
\setlength{\tabcolsep}{4pt}
\setlength{\LTleft}{-20cm plus 1fill}
\setlength{\LTright}{-20cm plus 1fill}
\renewcommand{\arraystretch}{1.8}
\begin{longtable}{ >{$}c<{$} | >{$}c<{$} | >{$}c<{$} | >{$}c<{$} }
	\caption[Classification of finite-dimensional Nichols algebras for dimension 6]{Classification of finite-dimensional Nichols algebras for $\dim_{\protect\k} V = 6$}
	\label{tab:dim6_classification} \\
	\hline
	\makebox[2.6cm]{$n,\gamma$} & \makebox[2.8cm]{$\b, i$} & \makebox[5.4cm]{$\a, w, r$} & \makebox[1.6cm]{$(k,l)$} \\
	\hline\hline
	\endfirsthead
	
	\caption[]{Classification of finite-dimensional Nichols algebras for $\dim_{\protect\k} V = 6$ (continued)} \\
	\hline
	n,\gamma & \b, i & \a, w, r & (k,l) \\
	\hline\hline
	\endhead
	
	\hline
	\endfoot
	
	\hline
	\endlastfoot
	
	% H_{13,4} Case 1
	n = 24 & \b = \gamma^{5-i} & \a^w = 1, \quad \a^r = - \gamma^{i^2-5i} & (13, 4) \\
	\hline
	% H_{17,2} Case 1
	n = 14 & \b = \gamma^{5-i} & \a^w = 1, \quad \a^r = -\gamma^{i^2-5i} & (17, 2) \\
\end{longtable}
\endgroup

\begin{remark}
\normalfont	
We make the following observations regarding our classification:
\begin{itemize}
	\item [(1)] In Table \ref{tab:dim1_classification}, the last column of the first row is empty. This corresponds to the case $n=1$, where the Yetter-Drinfeld module $V$ is necessarily one-dimensional. As discussed at the beginning of this subsection, we determine whether $\B(V)$ is finite-dimensional directly from its braiding, instead of using Lemma \ref{lem:WandD} (which requires $n\geq 2$). Therefore, no Heckenberg diagram is assigned to this case.
	\item [(2)] Our classification of finite-dimensional Nichols algebras yields significantly more cases than those obtained for $D_{i,j}$ in \cite[Section 5.2]{ZLY25}. This is because, in our current setting, the parameter $\b$ is not required to be of the form $\gamma^j$, and we include an additional parameter $\a$.
\end{itemize}

\end{remark}

\subsection{The case $(\a, \b, x^r g^i) \in \mathcal{T}_2$}

We now turn to the second case, where $(\a, \b, x^r g^i) \in \mathcal{T}_2$. Throughout this subsection, we continue to fix the Hopf algebra $H = B(n, w, \gamma)$ and consider the Yetter-Drinfeld module $V$ over it. We will prove that the associated Nichols algebra $\B(V(\a, \b, x^r g^i))$ is infinite-dimensional, regardless of the specific choices for the parameters $n, w$, and $\gamma$. While our approach strictly follows the methodology established in \cite[Section 5.3]{ZLY25}, we present the full proof here for the sake of completeness.

Assume $(\a, \b, x^r g^i) \in \mathcal{T}_2$ and set $ V = V(\a, \b, x^r g^i) $. Thanks to Theorem \ref{thm:YDmodule}, we can just assume $ 0\leq i \leq n-1 $. Recall the basis $\{v_0, v_1, \dots, v_{n-1}\}$ of $V$ from~\eqref{basisofV}. For each $ 0\leq k \leq n-1 $, let $ V_k = \k \{v_k \} $ be a $1$-dimensional space. We define the projection map as:
\[
p_k : V \to V_k, \quad \sum_{l=0}^{n-1} a_l v_l \mapsto a_k v_k.
\]
For brevity, we omit the tensor symbol. We write $ u_1 \dotsm u_m $ instead of $ u_1 \otimes \cdots \otimes u_m $. We want to prove that for all $ m \geq 1 $,
\begin{equation}
	\Delta_{1^m}^{T(V)}(v_i^m) \neq 0.
\end{equation}
If this holds, then $ \B(V) $ is infinite-dimensional.

For $ m\geq 1 $, define $ \psi_m =p_i^{\otimes m}\circ  \Delta_{1^m} $. We only need to show that 
\begin{equation}\label{psinot0}
	\psi_m(v_i^m) \neq 0.
\end{equation}
Note that $\psi_m(v_i^m) = a_m  v_i \otimes \cdots \otimes v_i$ for some scalar $a_m \in \k$. So, we just need to prove $a_m \neq 0$. To do this, we calculate $\Delta_{1^m}^{T(V)}(v_i^m)$ step by step.

For $ m\geq 2 $, we have the formula:
\begin{equation}\label{formuladelta}
	\Delta_{1^m}^{T(V)}(v_i^m)=(\pi_1 \otimes\Delta_{1^{m-1}}^{T(V)} ) ( \Delta (v_i^m) ).
\end{equation}
Since $\Delta$ is an algebra map, we get:
\begin{equation*}
	\Delta (v_i^m) = (1\otimes v_i + v_i \otimes 1)^m = \sum x_1 x_2 \cdots x_m.
\end{equation*}
Here, each $x_k$ is either $1 \otimes v_i$ or $v_i \otimes 1$. If a term has exactly $l$ copies of $v_i \otimes 1$ and $m-l$ copies of $1 \otimes v_i$, it belongs to $V^{\otimes l} \otimes V^{\otimes (m-l)}$.

In equation \eqref{formuladelta}, the map $\pi_1$ forces us to look only at terms in $V \otimes V^{\otimes (m-1)}$. These are the terms with exactly one $v_i \otimes 1$. Thus, we find:
\begin{equation}\label{formuladelta11}
	\Delta_{1^m}^{T(V)}(v_i^m)=(\pi_1 \otimes\Delta_{1^{m-1}}^{T(V)} ) \left(  \sum_{k=1}^{m} (1\otimes v_i^{k-1})(v_i \otimes v_i^{m-k}) \right).
\end{equation}

Now, consider the term $(1\otimes v_i^{k-1})(v_i \otimes v_i^{m-k})$. If $k = 1$, this is just $v_i \otimes v_i^{m-1}$. If $k > 1$, we expand it as:
\begin{equation*}
	(1\otimes v_i^{k-1})(v_i \otimes v_i^{m-k})=(v_i^{k-1})_{-1} \cdot v_i \otimes (v_i^{k-1})_{0}v_i^{m-k}=\sum a_{j_1, \dots, j_m} v_{j_1}\otimes v_{j_2}\cdots v_{j_m},
\end{equation*}
where $a_{j_1, \dots, j_m} \in \k$ and $0 \leq j_l \leq n-1$. Based on the action and coaction on $V$, for any term with a non-zero coefficient (meaning $a_{j_1, \dots, j_m} \neq 0$), the indices must satisfy:
\begin{equation*}
	j_1 + \cdots + j_m \leq m i.
\end{equation*}
In fact, a more general inequality holds, which we state as the following lemma.

\begin{lemma}\label{lem:zhibiaobuzeng}
For any $m \geq 2$, consider the expansion:
\[
\Delta_{1^m}^{T(V)}(v_{i_1}\otimes \cdots \otimes v_{i_m})=\sum a_{j_1, \dots, j_m} v_{j_1}\otimes \cdots \otimes v_{j_m},
\]
where $a_{j_1, \dots, j_m} \in \k$ and $0 \leq i_l, j_l \leq n-1$. Whenever $a_{j_1, \dots, j_m} \neq 0$, the indices satisfy:
\[
j_1 + \cdots + j_m \leq i_1 + \cdots + i_m.
\]
\end{lemma}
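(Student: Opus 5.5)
The plan is to introduce an index grading on $V$ and show that the braiding never raises total index; the lemma then follows by induction on $m$. For a basis tensor $v_{j_1}\otimes\cdots\otimes v_{j_m}$ define its weight as $j_1+\cdots+j_m$. Call a linear map on tensor powers of $V$ \emph{weight-nonincreasing} if it sends each basis tensor of weight $s$ into the span of basis tensors of weight at most $s$. The goal is to show that $\Delta_{1^m}^{T(V)}$ has this property. Since $\Delta_{1^m}^{T(V)}=S_m$ by \cite[Corollary 1.9.7]{HS20}, and $S_m$ is a sum of products of the maps $c_{j}=\mathrm{id}^{\otimes (j-1)}\otimes c\otimes\mathrm{id}^{\otimes(m-j-1)}$, and compositions and sums of weight-nonincreasing maps are again weight-nonincreasing, it suffices to prove that $c=c_{V,V}$ is weight-nonincreasing on $V\otimes V$.

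To check this, compute $c(v_a\otimes v_b)=(v_a)_{(-1)}\cdot v_b\otimes (v_a)_{(0)}$. By the coaction formula and Proposition \ref{pro:cijformula},
\[
\delta(v_a)=\sum_{l=0}^{a}\lambda_\b^i(a,l)\,y^{a-l}x^rg^{i-a}\otimes v_l .
\]
Hence
\[
c(v_a\otimes v_b)=\sum_{l=0}^a \lambda_\b^i(a,l)\,\a^r(\b\gamma^{-b})^{i-a}\; y^{a-l}\cdot v_b\otimes v_l .
\]
Since $(\a,\b,x^rg^i)\in\mathcal{T}_2$, we have $\b^n\neq1$ and $m=n-1$. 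In this case $y\cdot v_k=v_{k+1}$ for $k<n-1$, while $y\cdot v_{n-1}=(1-\b^n)v_0$. Thus $y^{a-l}\cdot v_b$ is a scalar multiple of $v_{t}$, where $t\equiv b+a-l \pmod n$ and $0\le t\le n-1$. In particular $t\le b+a-l$. The output term $v_t\otimes v_l$ therefore has weight $t+l\le a+b$.

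The only subtle point, and the step I expect to need the most care, is the wrap-around behaviour of $y$. The action of $y$ raises the index by $1$ except at $v_{n-1}$, where the index drops to $0$. This drop can only lower the weight, so the inequality goes in the stated direction. The same observation explains why the inequality is not an equality. Once $c$ is known to be weight-nonincreasing, so are $c^{\pm1}_j$ (only $c_j$ is actually needed, since $S_m$ uses only $c_j$). This gives the lemma for all $m\ge2$.

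Alternatively, one can use the recursion $\Delta_{1^m}=(\pi_1\otimes\Delta_{1^{m-1}})\Delta$, as in \eqref{formuladelta}, and induct on $m$. In that version each term $(1\otimes u)(v\otimes u')$ is again computed through the braiding, and the same weight bound is applied at every step.
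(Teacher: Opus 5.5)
Your argument is correct and is essentially the one the paper has in mind. The paper states this lemma without a written proof, supporting it only by the remark that the action and coaction on $V$ cannot raise the total index. Your reduction via $\Delta_{1^m}^{T(V)}=S_m$ to checking that $c_{V,V}$ is weight-nonincreasing makes that remark rigorous: the coefficient $c_\b^{r,i}(a,l)$ has $y$-degree $a-l$, and the wrap-around $y\cdot v_{n-1}=(1-\b^n)v_0$ can only lower the index. The only blemish is notational: you reuse $m$ both for the tensor degree and for the top basis index $n-1$ of $V(\a,\b,x^rg^i)$.
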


We now prove \eqref{psinot0}. Using the facts above, we compute $\psi_m(v_i^m)$:
\begin{align*}
	\psi_m(v_i^m) 
	&=  (p_i^{\otimes m}\circ  \Delta_{1^m}) (v_i^m) \\
	&= \left(p_i \otimes p_i^{\otimes^{m-1}}   \right) (\pi_1 \otimes\Delta_{1^{m-1}}^{T(V)} ) \left(  \sum_{k=1}^{m} (1\otimes v_i^{k-1})(v_i \otimes v_i^{m-k}) \right) \\
	&= \sum_{k=1}^{m} \left(p_i  \pi_1 \otimes \psi_{m-1} \right) \left( (v_i^{k-1})_{-1} \cdot v_i \otimes (v_i^{k-1})_{0}v_i^{m-k}  \right).  
\end{align*}
For each $1 \leq k \leq m$, we claim:
\begin{equation*}
	\left(p_i  \pi_1 \otimes \psi_{m-1} \right) \left( (v_i^{k-1})_{-1} \cdot v_i \otimes (v_i^{k-1})_{0}v_i^{m-k}  \right)=(\a^r)^{k-1}v_i\otimes  \psi_{m-1}(v_i^{m-1}).
\end{equation*}
The case $k = 1$ is clear. For $k \geq 2$, the steps are similar. We show the case $k = 3$ as an example. Since $c_\b^{r,i}(i,i) = x^r$, we get:
\begin{align*}
	& \left(p_i  \pi_1 \otimes \psi_{m-1} \right) \left( (v_i^{2})_{-1} \cdot v_i \otimes (v_i^{2})_{0}v_i^{m-3}  \right) \\
	&=  \left(p_i  \pi_1 \otimes \psi_{m-1} \right) \left( \sum_{0\leq r,l \leq i} c_\b^{r,i}(i,r)c_\b^{r,i}(i,l) \cdot v_i \otimes v_rv_l	v_i^{m-3} \right) \\
	&= (\a^r)^2 v_i\otimes  \psi_{m-1}(v_i^{m-1}).
\end{align*}
This last step follows from Lemma \ref{lem:zhibiaobuzeng}, which forces $\psi_{m-1}(v_r v_l v_i^{m-3}) = 0$ whenever $r < i$ or $l < i$. Substituting this back into the main sum, we obtain:
\begin{align*}
	\psi_m(v_i^m) 
	&=  \sum_{k=1}^{m} \left(p_i  \pi_1 \otimes \psi_{m-1} \right) \left( (v_i^{k-1})_{-1} \cdot v_i \otimes (v_i^{k-1})_{0}v_i^{m-k}  \right) \\
	&= \left(\sum_{k=1}^m (\a^r)^{k-1} \right) \left(v_i\otimes  \psi_{m-1}(v_i^{m-1}) \right) \\
	&= \left(\sum_{k=1}^m (\a^r)^{k-1} \right)\left(\sum_{k=1}^{m-1} (\a^r)^{k-1} \right) \left(v_i\otimes  v_i  \otimes \psi_{m-2}(v_i^{m-2}) \right) \\
	&= \left(\prod_{l=1}^{m} \left(\sum_{k=1}^l (\a^r)^{k-1} \right)    \right)( v_i \otimes \cdots \otimes v_i).
\end{align*}
Since $(\a, \b, x^r g^i) \in \mathcal{T}_2$, we know either $\a^r = 1$ or $\a^r \notin R_{\infty}$. In both cases, the product is not zero:
\[
\prod_{l=1}^{m} \biggl( \sum_{k=1}^l (\a^r)^{k-1} \biggr) \neq 0.
\]
Therefore, $\psi_m(v_i^m) \neq 0$ for all $m \geq 1$. This leads to our final result.

\begin{lemma}\label{lem:finiten-nichols}
	The Nichols algebra $\B(V(\a, \b, x^r g^i))$ is infinite-dimensional for all $(\a, \b, x^r g^i) \in \mathcal{T}_2$.
\end{lemma}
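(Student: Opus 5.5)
The plan is to show that $\Delta_{1^m}^{T(V)}(v_i^m)\neq 0$ for every $m\geq 1$, where $V=V(\a,\b,x^rg^i)$ with $(\a,\b,x^rg^i)\in\mathcal{T}_2$. By Theorem~\ref{thm:YDmodule} we may take $0\leq i\leq n-1$. Since $\b^n\neq 1$, $V$ has basis $\{v_0,\dots,v_{n-1}\}$. Because $\B^m(V)=V^{\otimes m}/\ker\Delta_{1^m}^{T(V)}$, the nonvanishing makes $v_i^m$ nonzero in $\B(V)$ for all $m$, so $\B(V)$ is infinite-dimensional. To detect nonvanishing I compose with the projection $p_i^{\otimes m}$ and study $\psi_m=p_i^{\otimes m}\circ\Delta_{1^m}$. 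The aim is the recursion
\[
\psi_m(v_i^m)=\Bigl(\sum_{k=1}^m(\a^r)^{k-1}\Bigr)\, v_i\otimes\psi_{m-1}(v_i^{m-1}),
\]
which gives $\psi_m(v_i^m)=\prod_{l=1}^m\bigl(\sum_{k=1}^l(\a^r)^{k-1}\bigr)\,v_i^{\otimes m}$.

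\textbf{Step 1: index-sum lemma.} I would first prove Lemma~\ref{lem:zhibiaobuzeng}, which I regard as the main obstacle. Assign index $j$ to $v_j$, so the total index of $v_{j_1}\otimes\cdots\otimes v_{j_m}$ is $\sum_s j_s$. By Proposition~\ref{pro:cijformula}, $\delta(v_k)=\sum_{l\le k}\lambda_\b^i(k,l)\,y^{k-l}x^rg^{i-k}\otimes v_l$, and $x,g$ act diagonally.
\begin{itemize}
\item[(a)] Applying $y$ to $v_j$ raises the index by one, except at $v_{n-1}$, where $y\cdot v_{n-1}=(1-\b^n)v_0$ lowers it by $n-1$.
\item[(b)] The braiding $c(v_k\otimes v_j)=\sum_l (y^{k-l}x^rg^{i-k}\cdot v_j)\otimes v_l$ moves $k-l$ units of index from the first factor to the second. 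By (a), it therefore produces terms whose total index is at most $k+j$.
\end{itemize}
Since $\Delta_{1^m}^{T(V)}$ is the braided symmetrizer $S_m$, a sum of products of such braidings, the total index never increases. This proves Lemma~\ref{lem:zhibiaobuzeng}.

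\textbf{Step 2: the recursion.} Use \eqref{formuladelta11}. The $k$-th summand equals
\[
(v_i^{k-1})_{(-1)}\cdot v_i\otimes(v_i^{k-1})_{(0)}v_i^{m-k},
\]
where the coaction on $v_i^{\otimes(k-1)}$ yields tensors $v_{l_1}\cdots v_{l_{k-1}}$ with each $l_s\le i$. By Step 1, $\psi_{m-1}$ applied to $v_{l_1}\cdots v_{l_{k-1}}v_i^{m-k}$ can be nonzero only if the total index of the input is at least $(m-1)i$. Since each $l_s\leq i$, this forces all $l_s=i$. In that case the coefficient is the product of the diagonal entries $c_\b^{r,i}(i,i)=x^r$, which acts on $v_i$ by $\a^r$. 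So the $k$-th summand contributes $(\a^r)^{k-1}v_i\otimes\psi_{m-1}(v_i^{m-1})$. Summing over $k$ gives the recursion, and induction gives the product formula. The terms with some $l_s<i$ place extra index on the first factor, possibly after a wrap-around, but they are annihilated by $\psi_{m-1}$, so their first factor does not need to be controlled.

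\textbf{Step 3: nonvanishing.} Since $\a\notin R_\infty$, either $r=0$, in which case $\a^r=1$ and each factor equals $l\neq 0$ because $\operatorname{char}\k=0$, or $\a^r$ is not a root of unity. In the latter case, $\sum_{k=1}^l(\a^r)^{k-1}=\bigl((\a^r)^l-1\bigr)/(\a^r-1)\neq 0$. Hence $\psi_m(v_i^m)\neq0$ for all $m$, and $\B(V)$ is infinite-dimensional.
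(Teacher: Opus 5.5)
Your proposal is correct and follows essentially the same route as the paper: you project with $p_i^{\otimes m}$ and derive the recursion $\psi_m(v_i^m)=\bigl(\sum_{k=1}^m(\a^r)^{k-1}\bigr)\,v_i\otimes\psi_{m-1}(v_i^{m-1})$ using $c_\b^{r,i}(i,i)=x^r$, and the nonvanishing of the product follows from $\a\notin R_\infty$. The one difference is a welcome addition: the paper only asserts Lemma~\ref{lem:zhibiaobuzeng}, whereas your Step~1 actually proves it, since each braiding $c$ (including the wrap-around $y\cdot v_{n-1}=(1-\b^n)v_0$) does not raise total index, and $\Delta_{1^m}^{T(V)}=S_m$ is a sum of products of such braidings.
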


We conclude this section with the following theorem.

\begin{theorem}\label{thm:nicholsalgebra}
Let $H = B(n, w, \gamma)$. Then the simple Yetter-Drinfeld modules $V$ over $H$ for which the Nichols algebra $\B(V)$ is finite-dimensional are precisely those classified in Tables \ref{tab:dim1_classification}--\ref{tab:dim6_classification} following Lemma \ref{lem:finiten-nichols-classification}.
\end{theorem}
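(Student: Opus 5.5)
The proof will combine the classification of simple objects with the two finiteness analyses already carried out. By Theorem \ref{thm:YDmodule}, every simple Yetter-Drinfeld module over $H$ is isomorphic to some $V(\a,\b,x^rg^i)$ with $(\a,\b,x^rg^i)\in\mathcal{T}$. Isomorphic Yetter-Drinfeld modules have isomorphic Nichols algebras, so it suffices to decide finite-dimensionality for each type. Since $\a^w=\b^n$ forces $\a,\b$ to be simultaneously roots of unity or not, we have $\mathcal{T}=\mathcal{T}_1\sqcup\mathcal{T}_2$, and I will treat the two pieces separately.

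For types in $\mathcal{T}_2$, Lemma \ref{lem:finiten-nichols} already shows $\B(V)$ is infinite-dimensional, so none of these modules appear in the tables. I only need to check that no $\mathcal{T}_2$ type is listed. Every table row forces $\b$ (and hence $\a$) to be a root of unity, either explicitly or via conditions like $\b=\gamma^{1-i}$ or $\b\in R_{12}$. So this is a consistency remark.

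For types in $\mathcal{T}_1$, I split according to $n$. If $n=1$, $V$ is one-dimensional with braiding scalar $\a^r\b^i$, which is a root of unity. By Remark \ref{rmk:exampledim1}, $\B(V)$ is finite-dimensional iff $\a^r\b^i\neq 1$, which is exactly the first row of Table \ref{tab:dim1_classification}. If $n\ge2$, Lemma \ref{lem:WandD} says $\B(V)$ is finite-dimensional iff $D(\a,\b,x^rg^i)$ equals some Heckenberger diagram $H_{k,l}$ with $(k,l)\in\mathcal{I}$ and $\a^r\b^i\neq1$. Lemma \ref{lem:finiten-nichols-classification} asserts that the 6-tuples realizing some $H_{k,l}$ with $\a^r\b^i\neq 1$ are exactly those in Tables \ref{tab:dim1_classification}--\ref{tab:dim6_classification}. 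The totally disconnected case corresponds to $H_{1,1}$, i.e.\ $\b=\gamma^{-i}$ with the vertex $\a^r\b^i=\a^r\gamma^{-i^2}\neq1$ and $\a^w=\b^n=1$. Each table row was derived by the case-by-case procedure illustrated for $H_{4,1},H_{6,2},H_{10,1},H_{11,1},H_{16,1}$: identify $\gamma$ with a vertex label, solve the edge label for $\b$, then impose the other vertex label and $\a^w=\b^n$.

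It remains to justify placing each row in the table indexed by $\dim V$. By the definition of $V(\a,\b,x^rg^i)$, the dimension is $n-\phi(-i-j)+1$ when $\b=\gamma^j$, and $n$ when $\b^n\neq1$. For each row I will compute this value; for instance, $\b=\gamma^{2-i}$ gives $j=2-i$, so $\phi(-i-j)=\phi(-2)=n-2$ and $\dim V=3$. In the rows with $\gamma=-1$ or $n=3$ and $\b^n\ne1$, the dimension is $n$, giving $2$ and $3$ respectively.

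The main obstacle is exhaustiveness and correctness of the tables: for every $(k,l)\in\mathcal{I}$ and every choice of which vertex carries $\gamma$, one must solve the label equations without missing cases (e.g.\ when both vertex labels coincide, or when $\gamma=-1$ allows $\b^n\ne1$). I would organize this as a finite check against \cite[Table 1]{H08}, mirroring the worked diagrams, and cite Lemma \ref{lem:finiten-nichols-classification} for its completeness.
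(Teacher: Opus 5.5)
Your route is the paper's. The paper proves the theorem by combining Lemma \ref{lem:finiten-nichols-classification} (types in $\mathcal{T}_1$) with Lemma \ref{lem:finiten-nichols} (types in $\mathcal{T}_2$). Your reduction via Theorem \ref{thm:YDmodule}, the $n=1$ case, Lemma \ref{lem:WandD} and the dimension bookkeeping simply unpacks that citation.

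One step is false as you state it. You claim every table row forces $\beta$, and hence $\alpha$, to be a root of unity, so that no $\mathcal{T}_2$ type is listed. Several rows do not force this:
\begin{itemize}
\item in Table \ref{tab:dim2_classification}, the rows with $\gamma=-1$, $\beta\notin\{-1,1\}$, labelled $(3,1)$, $(3,2)$ and $(5,1),(5,2)$;
\item in Table \ref{tab:dim3_classification}, the row $n=3$, $\beta^3\neq 1$, labelled $(6,1),(6,2)$.
\end{itemize}
For example, $n=2$, $\gamma=-1$, $w=2$, $\beta=2$, $\alpha=-2$, $r=1$, $i=-1$ satisfies every displayed condition of the $(3,2)$ row. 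The edge label is $\beta^{-1}\gamma^{-i}=-\tfrac12$, which is an admissible parameter for $H_{3,2}$. Yet this type lies in $\mathcal{T}_2$, so its Nichols algebra is infinite-dimensional by Lemma \ref{lem:finiten-nichols}. In Heckenberger's terms the root system is still finite, but the root $\alpha_1+\alpha_2$ carries the label $-\tfrac12$, which is not a root of unity.

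So the consistency check you propose would fail, and read literally the tables overstate the result. The fix is to treat the tables as a classification inside $\mathcal{T}_1$, which is the standing hypothesis of Lemma \ref{lem:finiten-nichols-classification}. In other words, $\alpha,\beta\in R_\infty$ is implicit in every row. The paper's own derivation for $H_{6,2}$ produces $\beta\in R_m$ with $m=2$ or $m\ge 4$, which the table abbreviates to $\beta^3\neq1$. With that reading, $\mathcal{T}_2$ is excluded by hypothesis rather than by inspecting rows, and the rest of your argument goes through unchanged.
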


\begin{proof}
This follows immediately from Lemmas~\ref{lem:finiten-nichols-classification},  and \ref{lem:finiten-nichols}.
\end{proof}

\section*{}

\subsection*{Funding}
Supported by National Key R\&D Program of China 2024YFA1013802 and NSFC 12271243.

\subsection*{Data availability}
No data was used for the research described in the article.

%\subsection*{Author Contributions}
%All authors contributed to all aspects of this project.
%\section*{Acknowledgements}
%The authors would like to thank the referee for his/her detailed and valuable comments and suggestions.

%\subsection*{Availability of data and materials}
%Not applicable.
%section*{Declarations}
%\subsection*{Competing interests}
%The authors declare no competing interests.
%\subsection*{Ethical Approval} Not applicable.
%The authors declare that they have no known competing financial interests or personal relationships that could have appeared to influence the work reported in this paper.
%\section*{Conflict of interest}
%Declarations of interest: none.

\end{document}